\definecolor{red}{rgb}{0.7,0.15,0.15}
\definecolor{green}{rgb}{0,0.5,0}
\definecolor{blue}{rgb}{0,0,0.7}
\makeatletter \@addtoreset{equation}{section}
\newtheorem{theorem}{Theorem}[section]
\newtheorem{assumption}[theorem]{Assumption}
\newtheorem{corollary}[theorem]{Corollary}
\newtheorem{example}[theorem]{Example}
\newtheorem{lemma}[theorem]{Lemma}
\newtheorem{proposition}[theorem]{Proposition}
\newtheorem{definition}[theorem]{Definition}
\newtheorem{remark}[theorem]{Remark}
\newcommand{\smallfont}[1]{\text{\fontsize{4}{4}\selectfont$#1$}}
\def \E{\mathbb{E}}
\def \F{\mathbb{F}}
\def \K{\mathbb{K}}
\def \L{\mathbb{L}}
\def \N{\mathbb{N}}
\def \P{\mathbb{P}}
\def \Q{\mathbb{Q}}
\def \R{\mathbb{R}}
\def \X{\mathbb{X}}
\def \Y{\mathbb{Y}}
\def \Z{\mathbb{Z}}
\def\Ac{{\cal A}}
\def\Bc{{\cal B}}
\def\Cc{{\cal C}}
\def\Fc{{\cal F}}
\def\Gc{{\cal G}}
\def\Kc{{\cal K}}
\def\Mc{{\cal M}}
\def\Nc{{\cal N}}
\def\Pc{{\cal P}}
\def\Wc{{\cal W}}
\def\Xc{{\cal X}}
\def\Yc{{\cal Y}}
\def\Zc{{\cal Z}}
\newcommand{\FPclass}{\mathcal{FP}}
\newcommand{\FP}{{\rm FP}}
\newcommand{\proj}{{\rm proj}}
\newcommand{\cpl}{{\rm cpl}}
\newcommand{\cplbc}{{\rm cpl}_{\rm bc}}
\newcommand{\cplc}{{\rm cpl}_{\rm c}}
\newcommand{\cplmc}{{\rm cpl}_{\rm mc}}
\newcommand{\AW}{\mathcal{AW}}
\author{Beatrice {\sc Acciaio} \footnote{ETH Z\"urich, Department of Mathematics, Switzerland, beatrice.acciaio@math.ethz.ch} \and Daniel {\sc Kr\v{s}ek} \footnote{ETH Z\"urich, Department of Mathematics, Switzerland, daniel.krsek@math.ethz.ch} \and Gudmund {\sc Pammer} \footnote{TU Graz, Institute of Statistics, Austria, gudmund.pammer@tugraz.at}}
\title{Multicausal transport: barycenters and dynamic matching}
\date{\today}	
\begin{document}
\newcommand\independent{\protect\mathpalette{\protect\independenT}{\perp}}
\def\independenT#1#2{\mathrel{\rlap{$#1#2$}\mkern2mu{#1#2}}}

\maketitle

\begin{abstract} 
We introduce a multivariate version of causal transport, which we name \emph{multicausal transport}, involving several filtered processes among which causality constraints are imposed. Subsequently, we consider the barycenter problem for stochastic processes with respect to causal and bicausal optimal transport, and study its connection to specific multicausal transport problems. Attainment and duality of the aforementioned problems are provided. As an application, we study a matching problem in a dynamic setting where agent types evolve over time. We link this to a causal barycenter problem and thereby show existence of equilibria.
\vspace{0.5cm}

{\bf Keywords:} Multicausal optimal transport, barycenters of processes, dynamic matching models.

\end{abstract}

\section{Introduction}

In recent years, causal optimal transport has emerged as a suitable counterpart to the classical optimal transport theory when considering stochastic processes.
To account for the evolution of information over time, causality constraints are imposed on the set of admissible transport plans.
In financial modeling, stochastic processes are used to describe the evolution of asset prices, in particular also reflecting beliefs of financial agents. In a market with multiple agents, then, a natural question is that of finding a `consensus model', which is our main motivation for studying barycenter problems with respect to causal and bicausal transport distances.
The latter can be reformulated as a multimarginal analogue of the bicausal transport problem which we name multicausal transport.

\medskip In the current setting of stochastic processes, the bicausal analogue of the classical Wasserstein distance is known as adapted Wasserstein distance.
In our analysis, we adopt the setting of \citeauthor*{BaBePa21} \cite{BaBePa21}, where the general framework for adapted Wasserstein distance for filtered processes was considered.
Wide applications in mathematical finance, stochastic optimal control, and machine learning suggest that the adapted Wasserstein distance is the
correct distance in order to obtain stability; see \emph{e.g.}\
\cite{Al81,He96,PfPi12,BaBaBeEd19a,XuLiMuAc20,BaBePa21,BaWi23,HoLeLiLySa23}.
Furthermore, it is remarkable that the topology induced by this distance coincides with multiple other topologies originating in different areas of mathematics, see \cite{BaBaBeEd19b,pa22}. The present paper further extends the theory and can be divided into three intertwined parts, which we briefly introduce below.

\medskip 
{\bf Multicausal transport} 

\smallskip The multimarginal formulation of standard optimal transport has been studied, among others, by \citeauthor*{Pa14} \cite{Pa14}, \citeauthor*{KiPa13} \cite{KiPa13} and \citeauthor*{Gangbo_MultimOT} \cite{Gangbo_MultimOT}. In the present work, we consider a generalization of causal transport maps to a situation with several marginals, which we coin \emph{multicausal}. We study the corresponding optimal transport problem, show its attainment, dynamic programming principle, and a duality result. Given a family of stochastic processes, such a problem can be interpreted as a way to measure how `variable' this family is. 
We also refer to the work \cite{KrPa23}, where multicausal transport naturally appears in a specific problem of robust pricing and hedging. Moreover, we provide a link between certain multicausal problems and bicausal barycenter problems.

\medskip {\bf Causal and bicausal baryenters} 

\smallskip  Barycenters of probability measures in Wasserstein space are believed to be an appropriate way of averaging them while taking into account the geometric properties of the underlying space. Introduced by \citeauthor*{AgCa11} \cite{AgCa11}, this problem has become remarkably popular for its applications in statistics, machine learning, and other fields. Interested readers can refer, for example, to \citeauthor*{CuDo14} \cite{CuDo14}, \citeauthor*{KiPa17} \cite{KiPa17}, \citeauthor*{LeLo17} \cite{LeLo17}, \citeauthor*{PaZe19} \cite{PaZe19}, \citeauthor*{RaPeDeBe12} \cite{RaPeDeBe12}, and the references therein for more details.
Given several filtered processes and cost functions, we study causal and bicausal barycenters. These can be seen as an appropriate way of averaging stochastic processes if one aims to also account for the flow of information in time. 

\medskip We show that the barycenter problems are attained under appropriate assumptions, and study the dual problems.
Moreover, it has already been noted in \cite{AgCa11} that the Wasserstein barycenter problem is closely related to a suitable  multimarginal transport problem. We provide an analogous result in the dynamic framework for the bicausal barycenter and the multicausal optimal transport. Such a reformulation not only offers a possible way to compute barycenters, but is also valuable from a theoretical point of view. More specifically, it provides insights into the structure of the barycenters and potentially offers an alternative way of showing attainment. Furthermore, we show that such a result, in general, does not hold for the causal barycenter problem due to its asymmetric nature.

\medskip Whether causal or bicausal barycenters should be considered depends on the context.
In the case of decision-making under uncertainty in financial markets, we may find ourselves in a situation where we have different models or `beliefs' (for example, provided by different agents in the market) and want to find a way to average them to build a `consensus model' that merges the properties of these models into one in an optimal way. This can be achieved by considering the adapted Wasserstein barycenter, which offers two clear advantages. 
On the one hand, the adapted Wasserstein barycenter of martingales remains in the class of martingales (so that averaging in this way between possible pricing models still produces a meaningful pricing model), which is not the case when taking averages with respect to classical Wasserstein distances. On the other hand, as the adapted Wasserstein distance is robust with respect to a broad class of stochastic optimization problems (such as hedging, pricing, and utility maximization; see \cite{BaBaBeEd19a}), using this distance makes sense in a financial context, as the average model then performs similarly to the initial models in all such problems.

\medskip A different scenario where the causal barycenter may instead be the appropriate choice is when we are looking for a consensus model in a situation where each agent has their own filtration, and the goal is to agree on a single distribution that describes the dynamics of the asset. In such a problem, the solution is a process with a canonical filtration (\emph{i.e.}, we only care about the law of the process), which, however, still depends on the filtrations of the given models. Many problems in mathematical finance remain robust even in this setting; see \cite{AcBaZa20}. We also provide an application in matching models in a dynamic setting, which offers further interpretation and insights into this formulation.

\medskip {\bf Dynamic matching models}

\smallskip  As our last contribution, we introduce the problem of matching in dynamic setting. The connections of optimal transport and matching models in economics have been investigated by numerous authors, see, for example, \citeauthor*{GaSa22} \cite{GaSa22}, \citeauthor*{Dupuy_2014} \cite{Dupuy_2014}, \citeauthor*{Ga16} \cite{Ga16}, \citeauthor*{ChSa16} \cite{ChSa16}, \citeauthor*{CaObOu15} \cite{CaObOu15}. We point out the work of \citeauthor*{Carlier_matching} \cite{Carlier_matching}, who studied the problem of matching models and its connections to barycenters in Wasserstein distance. Motivated by \cite{Carlier_matching}, we consider a situation in which agents divided into several groups are hired by principals to work on specific tasks. In such a setting, with a contract in place, every individual maximizes their utility dynamically in time based on the evolution of some random process, which represents their type evolving in time, thus facing a dynamic stochastic optimal control problem. Moreover, since the type is assumed to be unobserved by others and thus cannot be contracted upon, we face an adverse selection situation. We define a suitable form of equilibria in this setting and, exploiting the results of our work as well as \citeauthor*{KrPa23} \cite{KrPa23}, we show that such an equilibrium exists under appropriate conditions.

\medskip The remainder of the work is organized as follows. \Cref{sec:notation} introduces the notation used throughout the article, as well as the main definitions. In \Cref{sec:multicausal}, we study multicausal optimal transport, its attainment and dual problem. \Cref{sec:AWbary,sec:CWbary} present the main results concerning causal and bicausal barycenter problems. In \Cref{sec:matching_models}, we introduce the problem of matching models in a dynamic setting and provide an existence result for equilibria.

\section{Setting and preliminaries}\label{sec:notation} 
The set of positive integers is denoted by $\N$ and the set of reals by $\R.$ 
Let $(\Omega,\Fc)$ be a measurable space. 
We denote by $\L^0(\Fc)$ the set of all $\R$-valued, $\Fc$-measurable random variables. The set of all probability measures on $(\Omega,\Fc)$ is denoted by $\Pc(\Omega,\Fc)$. If the $\sigma$-algebra $\Fc$ is obvious from the context, \emph{e.g.} if $\Omega$ is a Polish space and $\Fc=\Bc(\Omega)$, we will simplify the notation to $\Pc(\Omega).$  If $\Omega$ is a Polish space, we endow $\Pc(\Omega)$ with the weak topology.
If $\P \in \Pc(\Omega, \Fc)$ is a probability measure and $p\geq 1$, we write $\L^p(\Fc,\P)$ for the set of functions in $\L^0(\Fc)$ with finite $p$-th moment under $\P$. 
Whenever $\Fc$ is clear from the context we shall write $\L^p(\P)$ instead of $\L^p(\Fc,\P).$

\medskip Let $f : \Omega \longrightarrow \Yc$ be a measurable map, where $\Yc$ is a Polish space and $\P \in \Pc(\Omega,\Fc).$ We denote by $f_{\#} \P \in \Pc(\Yc)$ the push-forward of $\P$ under $f.$ That is to say, the probability measure  determined by \[ \int G(y) (f_{\#} \P)(\mathrm{d}y)= \int G(f(\omega)) \P(\mathrm{d} \omega),\quad G \in \Cc_b(\Yc), \] where $\Cc_b(\Yc)$ denotes the space of all real-valued continuous and bounded functions on $\Yc.$ If $\Ac \subseteq \Pc(\Omega,\Fc)$ is a subset of probability measures, we say that a statement holds $\Ac$--quasi-surely, abbreviated to $\Ac$--q.s., if it holds everywhere outside of a set $A \in \Fc$ that satisfies $\P(A)=0$ for every $\P \in \Ac.$

\medskip Throughout this work, we fix $N \in \N$ and a time horizon $T \in \N.$ If $x^i \in A^i$, $i \in \{1,\ldots,N\}$, are given elements of some sets $A^i$, we will make use of the shorthand notation $x^{1:N} \coloneqq (x^1,\ldots,x^N) \in A^{1:N}\coloneqq\prod_{i=1}^N A^i.$ Analogously, if $m,n \in \{1,\ldots,N\}$ are such that $m \leq n$, we define $x^{m:n} \coloneqq (x^m,x^{m+1},\ldots,x^n) \in A^{m:n}\coloneqq \prod_{i=m}^n A^i.$ In the same way, if $x_t \in A_t$, $t \in \{1,\ldots,T\}$, for some sets $A_t$, we denote $x_{1:T} \coloneqq (x_1,\ldots,x_T) \in A_{1:T} \coloneqq \prod_{t=1}^T A_t$, \emph{etc}.

\medskip
Let $\Xc \coloneqq \prod_{t = 1}^T \Xc_t$ be a path space, where each $(\Xc_t,d_t)$ is a separable complete metric space. We equip $\Xc$ with the metric $d(x,y) \coloneqq \sum_{t = 1}^T d_t(x_t,y_t)$, which renders it a complete metric space, hence a Polish space. A filtered process $\X$ is a $5$-tuple
\[
    \X = \big( \Omega^\X, \Fc^\X, \F^\X, \P^\X, X \big),
\]
where $(\Omega^\X, \Fc^\X, \F^\X, \P^\X)$ is a filtered probability space with $\F^\X=(\Fc_t^\X)_{t = 1}^T$ and $X= (X_t)_{t = 1}^T$, is an $\F^\X$-adapted process, where $X_t$ takes values in $\Xc_t$, for $t \in \{1,\ldots,T\}$.

\medskip
Throughout the article, we operate under the following \emph{standing assumption} without further explicit mention.

\begin{assumption} \label{assum_space} For every filtered process $\X$ we have $\Omega^\X=\Omega^\X_{1:T}$ for some $\Omega^\X_t,\; t \in \{1,\ldots,T\}.$ Further, $\Omega^\X_t$ are Polish spaces endowed with their Borel $\sigma$-algebras and $\F^\X=(\Fc_t^{\X})_{t=1}^T$ is the canonical filtration on the space $\Omega^\X_{1:T}.$ That is, $\Fc_t^\X=\bigotimes_{s=1}^t \Bc(\Omega^\X_{s}) \otimes \bigotimes_{s=t+1}^T \{ \emptyset, \Omega^\X_s \}.$  Finally, we have that $\Fc^\X = \Fc^\X_T.$
\end{assumption}
We use the convention $\Fc_0^\X= \{ \Omega^\X,\emptyset \}$ and denote the class of all filtered process satisfying \Cref{assum_space} by $\FPclass$.
 
\medskip 
For a filtered process $\X$,
by disintegrating $\P^\X$ in successive kernels, we write 
\[ \P^\X(\mathrm{d}\omega_{1:T})=\P^\X_{1}(\mathrm{d}\omega_1) \otimes\P^\X_{2,\omega_\smallfont{1}}(\mathrm{d}\omega_2) \otimes \cdots \otimes \P^\X_{T,\omega_{\smallfont{1}:\smallfont{T}\smallfont{-}\smallfont{1}}}(\mathrm{d}\omega_T).\] 

The following assumption is also assumed throughout the paper without further explicit mentioning.

\begin{assumption} \label{ass:continuity} For every filtered process $\X,$ we have that the maps \[X: \Omega^{\X} \longrightarrow \Xc\quad \text{and}\quad\omega_{t-1}\longmapsto \P^\X_{t,\omega_{\smallfont{t}\smallfont{-}\smallfont{1}}}(\mathrm{d}\omega_t),\;t \in \{2,\ldots,T\}\] are continuous.
\end{assumption} 
We emphasize that \Cref{ass:continuity} is without loss of generality, as we can always refine the topology on $\Omega^{\X}$ using \Cref{lem:topology}. 
Moreover, both Assumptions~\ref{assum_space} and \ref{ass:continuity} are satisfied by the canonical representative of $\X$, see \cite[Definition 3.8]{BaBePa21}. 
We also note that for most results in this work replacing $\X$ by its canonical representative is without loss of generality.

\medskip 
Throughout this article we will deal with $N$-tuples $(\X^1, \ldots, \X^N) \in \FPclass^N$ and write
\[ \X^i=\big( \Omega^i, \Fc^i, \F^i, \P^i, X^i \big),\quad i \in\{1,\ldots,N\}. \]
Further, products of filtrations and probability spaces are denoted with an overline, \emph{i.e.},
\begin{align*}
    \overline{\Omega} \coloneqq \prod_{i = 1}^N \Omega^i, \quad \overline{\Fc}_{v} \coloneqq \bigotimes_{i = 1}^N \Fc^i_{v_i}, \quad \overline{\Fc}_t \coloneqq \bigotimes_{i = 1}^N \Fc^i_t,
\end{align*}
where $v \in \{0,\ldots,T\}^N$ and $t \in \{0,\ldots,T\}$. The $i$-th unit vector in $\R^N$ is denoted by $e_i$.
Using this notation, we have for example that $\overline{\Fc}_{t e_1} = \Fc_t^1 \otimes \Fc_0^2 \otimes \ldots \otimes \Fc_0^N$, $t \in \{ 0,\ldots, T \}$. We further set $\overline{\F} \coloneqq (\overline{\Fc}_{t})_{t=0}^T.$ We shall represent a generic element of $\Omega^i_t$ by $\omega_t^i$ and that of $\Omega^i$ by $\omega^i$.

\medskip For $I \subseteq \{1,\ldots, N\}$ we denote by ${\rm proj}^I : \overline{\Omega} \longrightarrow \prod_{i \in I} \Omega^i$ the projection map on the coordinates in $I$. We write ${\rm proj}^i$ instead of ${\rm proj}^{\{i \}}$ when $I = \{ i \}.$ Similarly, we denote by $\proj^{1:t} : \Omega^i \longrightarrow \Omega^i_{1:t}$ the projection on the first $t$ coordinates, \emph{etc}. 
In correspondence with the above, we write $\overline{\omega}_t\coloneqq(\omega_t^1,\ldots, \omega_t^N)$ and $\overline{\omega}_{1:t}\coloneqq(\omega_{1:t}^1,\ldots, \omega_{1:t}^N)$ for a generic element of $\overline{\Omega}_t\coloneqq \prod_{i=1}^N \Omega^i_t$ and $\overline{\Omega}_{1:t}\coloneqq \prod_{s=1}^t \overline{\Omega}_{s}$, respectively.
If $f^i: \Omega^i \longrightarrow \Yc$ is a measurable map for some Polish space $\Yc$, we will formally identify $f^i$ with the map $f^i \circ {\rm proj}^i : \overline{\Omega} \longrightarrow \Yc$ and write $f^i(\overline{\omega})=f^i(\omega^i).$ 
Moreover, if $f^i$ is $\Fc_t^i$-measurable, we will sometimes write $f^i(\overline{\omega})=f^i(\omega^i)=f^i(\omega_{1:t}^i).$
In particular, we have $X^i_t(\overline{\omega})=X^i_t(\omega^i)=X^i_t(\omega^i_{1:t})$.

\begin{remark} In the special case when $\Omega^i=\prod_{t=1}^T \Omega^i_t$ coincides with $\Xc=\prod_{t=1}^T \Xc_t$ for some $i \in\{ 1,\ldots,N\}$, the notation becomes natural. We have $\Omega_{1:t}^i=\Xc_{1:t}\coloneqq \Xc_1 \times \cdots \times \Xc_t$, \emph{etc}. That is to say, every $\omega^i_{1:t} \in \Omega^i_{1:t}$ corresponds to a path $x_{1:t} \in \Xc_{1:t}.$ Indeed, if $f : \Xc \longrightarrow \R$ is $\Fc^i_t$-measurable, it is natural to write $f(x)=f(x_{1:t})$ to emphasize the dependency. 
\end{remark}

\begin{definition}
    Let $N\geq 2$ and $(\X^1,\ldots, \X^N) \in \FPclass^N$.
    We denote by $\cpl(\X^1,\ldots, \X^N)$ the set of probability measures $\pi$ on the measurable space $(\overline{\Omega}, \overline{\Fc})$ that satisfy $\proj^i_{\#} \pi = \P^i$, $i \in \{1,\ldots,N\},$ and call its elements couplings. When $N = 2$, $\pi \in \cpl(\X^1,\X^2)$ is called:
    \begin{enumerate}[label = (\roman*)]
        \item causal if, for all $t \in \{1,\ldots,T\}$, we have under $\pi$ that $\overline{\Fc}_{(T,0)}$ is conditionally independent of $\overline{\Fc}_{(0,t)}$ given $\overline{\Fc}_{(t,0)}$;
        \item bicausal if $\pi$ and $e_\# \pi$ are both causal, where $e: \Omega^1 \times \Omega^2 \longrightarrow \Omega^2 \times \Omega^1$ with $e(\omega^1,\omega^2) \coloneqq (\omega^2,\omega^1)$.
    \end{enumerate}
    For general $N \ge 2$, $\pi \in \cpl(\X^1,\ldots, \X^N)$ is called:
    \begin{enumerate}[label = (\roman*), resume]
        \item multicausal if, for all $t \in \{1,\ldots, T\}$ and $i \in \{1,\ldots,N\}$, we have under $\pi$ that $\overline{\Fc}_{Te_i}$ is conditionally independent of $\overline{\Fc}_t$ given $\overline{\Fc}_{te_i}$. 
    \end{enumerate}
    We denote the set of all causal, bicausal and multicausal couplings by $\cplc(\X^1,\X^2), \cplbc(\X^1,\X^2)$ and $\cplmc(\X^1,\ldots,\X^N)$, respectively.
\end{definition}

Clearly, the notions of multicausality and bicausality coincide for $N=2$, while the case $N > 2$ is a generalization of the bicausal case to a multimarginal situation, where causality is imposed among all marginals.
For $p\geq 1$, the $p$-adapted Wasserstein distance between two elements $(\X^1,\X^2) \in \FPclass^2$ is given by 
\begin{equation*}
    \AW_p(\X^1,\X^2) \coloneqq \inf_{\pi \in \cplbc(\X^1,\X^2)} \E_\pi[d(X^1,X^2)^p]^\frac1p.
\end{equation*}

Replacing in the above definition $d$ by some bounded but topologically equivalent metric, \emph{e.g.} by $\tilde{d}\coloneqq d \wedge 1$, induces an equivalence relation on $\FPclass$ given by $\X^1 \sim \X^2 \iff \Ac\Wc_{\tilde d}(\X^1,\X^2)=0$.
Following \cite{BaBePa21}, we write $\FP$ for the quotient space with respect to this equivalence relation and $\FP_p \coloneqq \{ \X \in \FP \,\vert\, \mathbb E[d(X,\hat x)^p] < \infty \text{ for some } \hat x \in \Xc \}$.
We remark that by \cite{BaBePa21} the metric space $(\FP_p,\AW_p)$ is complete and separable, thus Polish, and we will from now on identify elements of $\FPclass$ with their equivalence class in $\FP$. We endow ${\rm FP}$ with the topology generated by the metric $\Ac\Wc_{\tilde d}$ and shall call it the \emph{adapted weak topology}.

\medskip
Given a measurable cost function $c : \Xc \times \Xc \longrightarrow \R$, we denote by \begin{equation} \label{eqn:def_AW_CW}
\Cc\Wc_c(\X^1,\X^2) \coloneqq \inf_{\pi \in \cplc(\X^1,\X^2)} \E_\pi [c(X^1,X^2)]\quad  {\rm and}\quad \AW_c(\X^1,\X^2) \coloneqq \inf_{\pi \in \cplbc(\X^1,\X^2)} \E_\pi [c(X^1,X^2)]
\end{equation} 
the optimal causal, resp.\ bicausal, transport value between $\X^1$ and $\X^2$ with respect to $c.$

\section{Multicausal transport} \label{sec:multicausal}
In this section, we study the multicausal transport problem, its attainment, and duality. Let $c: \Xc^N \longrightarrow \R $ be measurable and $(\X^1,\ldots,\X^N) \in {\rm FP}^N$. 
We consider the following multimarginal transport problem over multicausal couplings:
\begin{equation}
    \label{eq:def.mcot}
    V_c^{\rm mc}(\X^1,\ldots,\X^N) \coloneqq \inf_{ \pi \in \cplmc(\X^1,\ldots,\X^N)} \E_\pi\big[c(X^1,\ldots,X^N)\big].
\end{equation}

\begin{remark} \label{rem:generalXi} More generally, one can consider processes $\X^1,\ldots,\X^N$, where $X^i_t \in \Xc_t^i$ for every $t \in \{1,\ldots,T\},\; i \in \{1,\ldots,N\}$, and $\Xc_t^i$, $i \in \{1,\ldots,N\}$, are potentially different Polish spaces. The results of this section as well as {\rm\Cref{sec:CWbary,sec:AWbary}} remain true, \emph{mutatis mutandis}.
\end{remark}

The multicausal transport problem can be seen as a generalization of the classical multimarginal optimal transport problem to the
dynamic setting.
Here, in order to properly treat filtrations, the multicausality constraint is imposed on the set of multimarginal couplings.
We remark that, in the setting of \cite[Section 3.1]{KrPa23}, the duality for multicausal transport precisely corresponds to a robust pricing-hedging duality, where multicausal couplings take the role of pricing measures and admissible dual potentials can be seen as robust hedging strategies.

\subsection{Existence of primal optimizers}
First we treat existence of solutions of \eqref{eq:def.mcot}. \Cref{thm:multicausal} shows that under the usual assumptions for optimal transport, the multicausal problem \eqref{eq:def.mcot} admits optimizers, is lower-semicontinuous with respect to the marginals, and enjoys the dynamic programming principle. Such a result was already noted in \citeauthor*{BaBeLiZa17} \cite{BaBeLiZa17} and \citeauthor*{PfPi14} \cite{PfPi14} in the bimarginal case.

\begin{theorem} \label{thm:multicausal}
    Let $c$ be lower-semicontinuous and lower-bounded.
    The value map $V_c^{\rm mc}$ satisfies:
    \begin{enumerate}[label = (\roman*)]
        \item \label{it:multicausal.dpp} Dynamic programming principle: set $V(T,\overline{\omega}) \coloneqq c(X^1(\omega^1),\ldots, X^N(\omega^N))$ and define inductively backwards in time, for $t \in\{1,\ldots,T-1\}$, the value process
        \begin{align*}
        V(t,\overline{\omega}_{1:t}) &\coloneqq \inf \bigg\{ \int V(t+1,\overline{\omega}_{t+1}) \pi(\mathrm{d}\overline{\omega}_{t+1}) \,\bigg\vert\, \pi \in \cpl\big( \P^1_{t+1,\omega^\smallfont{1}_{\smallfont{1}\smallfont{:}\smallfont{t}}},\ldots,\P^N_{t+1,\omega^\smallfont{N}_{\smallfont{1}\smallfont{:}\smallfont{t}}}\big)\bigg\}, \\
        V(0) &\coloneqq \inf \bigg\{ \int V(1,\overline{\omega}_{1}) \pi(\mathrm{d}\overline{\omega}_{1}) \,\bigg\vert\, \pi \in \cpl\big( \P^1_{1},\ldots,\P^N_{1}\big)\bigg\}.
        \end{align*}
        Then $V_c^{\rm mc}(\X^1,\ldots,\X^N)$ agrees with the value $V(0).$
        
        \item \label{it:multicausal.minimizer} A coupling $\pi^\star \in \cplmc(\X^1,\ldots,\X^N)$ is optimal for $V_c^{\rm mc}(\X^1,\ldots,\X^N)$ if and only if there exists a version of the disintegration $\pi^\star(\mathrm{d}\overline{\omega})=K_1^\star(\mathrm{d}\overline{\omega}_1) \otimes K_2^\star(\overline{\omega}_1;\mathrm{d}\overline{\omega}_2) \otimes \cdots \otimes K_T^\star(\overline{\omega}_{1:T-1};\mathrm{d}\overline{\omega}_T)$ such that \begin{align*}
        K_t^\star(\overline{\omega}_{t-1};\,\cdot\,) &\in \arg\min \bigg\{ \int V(t+1,\overline{\omega}_{t+1}) \pi(\mathrm{d}\overline{\omega}_{t+1}) \,\bigg\vert\, \pi \in \cpl\big( \P^1_{t+1,\omega^\smallfont{1}_{\smallfont{1}\smallfont{:}\smallfont{t}}},\ldots,\P^N_{t+1,\omega^\smallfont{N}_{\smallfont{1}\smallfont{:}\smallfont{t}}}\big)\bigg\},\;t \in \{2,\ldots,T\}, \\
        K_1^\star(\,\cdot\,) &\in \arg\min \bigg\{ \int V(1,\overline{\omega}_{1}) \pi(\mathrm{d}\overline{\omega}_{1}) \,\bigg\vert\, \pi \in \cpl\big( \P^1_{1},\ldots,\P^N_{1}\big)\bigg\}.
        \end{align*}
        
        \item \label{it:multicausal.existence} There exists $\pi^\star \in \cplmc(\X^1,\ldots,\X^N)$ minimizing \eqref{eq:def.mcot}.
      \item \label{it:multicausal.valuelsc} The optimal value map $V_c^{\rm mc}: \FP^N\longrightarrow \R \cup \{\infty\}$ is lower-semicontinuous, where we endow $\FP^N$ with the product of adapted weak topologies on $\FP$.
    \end{enumerate}
\end{theorem}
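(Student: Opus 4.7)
The plan is to prove a structural characterisation of multicausal couplings first, deduce parts (i)--(iii) via backward induction and a compactness argument, and finally establish (iv) through a gluing construction in the adapted weak topology.

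\medskip
First I would show that $\pi\in\cplmc(\X^1,\ldots,\X^N)$ if and only if $\pi$ disintegrates as $\pi(\mathrm d\pmb\omega)=K_1(\mathrm d\pmb\omega_1)\otimes\cdots\otimes K_T(\pmb\omega_{1:T-1};\mathrm d\pmb\omega_T)$ with $K_t(\pmb\omega_{1:t-1};\cdot)\in\cpl\bigl(\P^1_{t,\omega^1_{1:t-1}},\ldots,\P^N_{t,\omega^N_{1:t-1}}\bigr)$ for every $t$. The ``if'' direction is direct verification of the conditional independences from the factorised form. For ``only if'', the multicausality condition applied simultaneously for $t$ and every $i$ forces the conditional law of $\pmb\omega_t$ given $\pmb{\Fc}_{t-1}$ to project marginally onto $\P^i_{t,\omega^i_{1:t-1}}$, yielding the product-kernel form.

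\medskip
Given this characterisation, (i) and (ii) follow by Fubini together with the freedom to pick each $K_t$ independently once the earlier ones are fixed: $\E_\pi[c]$ rewrites as a nested iterated integral against $K_T,\ldots,K_1$, and backward induction gives $V_c^{\rm mc}=V(0)$ together with the pointwise characterisation of optimal kernels. For (iii), I would invoke compactness: $\cpl(\X^1,\ldots,\X^N)$ is weakly compact thanks to tightness from the fixed marginals, and $\cplmc$ is a weakly closed subset, since multicausality can be encoded by countably many identities of the form $\E_\pi[fg]=\E_\pi[f\,\E_\pi[g\mid\pmb{\Fc}_{te_i}]]$ with bounded continuous test functions. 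As $c$ is lsc and lower-bounded, $\pi\mapsto\E_\pi[c]$ is lsc and attains its infimum.

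\medskip
The main obstacle is (iv), because the $\X^{(n),i}$ and $\X^i$ live on different probability spaces. Given $\X^{(n)}\to\X$ in $\FP^N$ and optimisers $\pi^{(n)}\in\cplmc(\X^{(n)})$ from (iii), my strategy is to use the adapted weak topology to produce, for every coordinate $i$, bicausal couplings $\gamma^{(n),i}\in\cplbc(\X^{(n),i},\X^i)$ with vanishing adapted cost, glue them coordinatewise with $\pi^{(n)}$ on a common product space, and project onto $\pmb\Omega$ to obtain $\tilde\pi^{(n)}\in\cpl(\X^1,\ldots,\X^N)$ whose cost differs from $V_c^{\rm mc}(\X^{(n)})$ only by a vanishing error controlled via lsc of $c$. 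The delicate point is to verify that the gluing preserves multicausality after projection, using that each $\gamma^{(n),i}$ is bicausal; once this is in place, tightness extracts a weak subsequential limit $\pi\in\cplmc(\X^1,\ldots,\X^N)$ from the closedness established in (iii), and lsc of $c$ yields $\E_\pi[c]\le\liminf_n V_c^{\rm mc}(\X^{(n)})$, giving the desired lower-semicontinuity.
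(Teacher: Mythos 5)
Your overall strategy matches the paper's: establish a disintegration characterisation of multicausal couplings (this is precisely the paper's Lemma~\ref{disintegr}), use it to rewrite the cost as a nested iterated integral, derive the dynamic programming principle and the kernel-wise optimality characterisation by backward induction, and handle lower-semicontinuity via compactness. However, there are two technical points your sketch glosses over that the paper treats explicitly and that are genuinely needed to close the argument.

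First, for the backward induction to make sense you must know that $\pmb\omega_{1:t}\mapsto V(t,\pmb\omega_{1:t})$ is measurable, and in fact the paper establishes lower-semicontinuity. This is not automatic: the disintegration kernels $\omega^i_{1:t}\longmapsto\P^i_{t+1,\omega^\smallfont{i}_{\smallfont{1}\smallfont{:}\smallfont{t}}}$ are only Borel a priori, so the sets $\cpl(\P^1_{t+1,\omega^\smallfont{1}_{\smallfont{1}\smallfont{:}\smallfont{t}}},\ldots,\P^N_{t+1,\omega^\smallfont{N}_{\smallfont{1}\smallfont{:}\smallfont{t}}})$ need not vary semicontinuously with $\pmb\omega_{1:t}$. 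The paper first invokes Lemma~\ref{lem:topology} to refine the topologies so these kernels become continuous; only after that does lsc of $V(t,\cdot)$ propagate backwards. Your sketch does not address this.

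Second, and more importantly, ``the freedom to pick each $K_t$ independently'' does not give (i) and (ii) without a measurable selection step. To prove $V_c^{\rm mc}\le V(0)$ you must assemble (near-)optimal kernels $\pmb\omega_{1:t-1}\longmapsto K_t^\star(\pmb\omega_{1:t-1};\cdot)$ into a bona fide element of $\cplmc(\X^1,\ldots,\X^N)$, which requires the kernels to be jointly measurable in the conditioning variable; the paper uses the Jankov--von Neumann measurable selection theorem for this. The same issue appears in the ``only if'' direction of (ii): to argue that a non-pointwise-optimal kernel can be replaced by a better one (contradicting optimality of $\pi^\star$) you again need a measurable replacement. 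Your compactness route to (iii) does not sidestep this, because without measurable selection you cannot even identify $V_c^{\rm mc}$ with $V(0)$ and so cannot conclude the optimal coupling has the stated kernel-wise structure.

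On (iv), your proposed gluing argument is more detailed than the paper's one-line justification and is, in my view, closer to what is actually required: since the marginals vary, Lemma~\ref{lem:compactness} (which is stated for fixed marginals) does not apply directly. The construction you outline---couple each $\X^{(n),i}$ to $\X^i$ via a near-optimal bicausal plan, glue with $\pi^{(n)}$, and project---does work, but the preservation of multicausality under this gluing is itself a nontrivial lemma; it follows by iterating the appendix results Lemma~\ref{lem:mcgluing} and Lemma~\ref{lem:mcrestriction}, not directly from bicausality of the $\gamma^{(n),i}$'s. If you add the topology refinement, measurable selection, and a precise statement of the gluing-projection lemma, your proof is complete and faithful to the paper's.
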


\begin{remark} \label{rem:generalization_V_c} $(1)$ The lower-boundedness assumption on the cost function $c$ can be relaxed by standard techniques to the situation where there exist $\ell^i \in \L^1({\rm Law}_{\P^i}(X^i))$, $i \in \{1,\ldots,N\}$, such that
$\sum_{i=1}^N \ell^i(x^i) \leq c(x^1,\ldots, x^N)$, where ${\rm Law}_{\P^i}(X^i) \in \Pc(\Xc)$ denotes the law of $X^i$ under $\P^i.$

\medskip
$(2)$ If $c$ is continuous and, for $i \in \{1,\ldots,N\},$ $A^i \subseteq {\rm FP}$ is such that there exist $\varepsilon>0$ and $\ell^i :\Xc \longrightarrow \R$ with
\[
\sup_{\X \in A^i} \E_{\P^\X} \lvert \ell^i(X) \rvert^{1+\varepsilon}<\infty\quad \text{for all } i \in \{1,\ldots,N\},
\] 
and
$\lvert c(x^1,\ldots, x^N) \rvert \leq \sum_{i=1}^N \ell^i(x^i)$, then the optimal value map $V_c^{\rm mc}: \prod_{i=1}^N A^i\longrightarrow \R$ is continuous. This can be shown analogously to {\rm\Cref{thm:multicausal}}.$\ref{it:multicausal.valuelsc}$, using the fact that the map
\[ A_C\ni\mu \longmapsto \int c(x^1,\ldots,x^N) \mu(\mathrm{d} x^1,\ldots,\mathrm{d}x^N)\] is continuous for every $C>0$, where 
\[ A_C\coloneqq \bigg\{\mu \in \Pc(\Xc^N)\,\bigg\vert\, \int \big|c(x^1,\ldots,x^N)\big|^{1+\varepsilon} \mu(\mathrm{d}x^1,\ldots,\mathrm{d}x^N) < C \bigg\},\]
by application of {\rm \citeauthor*{FeKaLi20} \cite[Corollary 2.8]{FeKaLi20}.
}
\end{remark}

\begin{proof}[Proof of Theorem~\ref{thm:multicausal}] 
By \Cref{ass:continuity}, we have that $\omega^i_{1:t} \longmapsto \P^i_{t+1,\omega^\smallfont{i}_{\smallfont{1}\smallfont{:}\smallfont{t}}}$ is a continuous map from $\Omega^i_{1:t}$ to $\Pc(\Omega^i_{t+1})$ for any $i \in \{1,\ldots N\}$ and $t \in \{1,\ldots,T-1\}$.
    Moreover, by standard arguments, we can show that, for $t \in \{1,\ldots, T \}$ and $\overline{\omega}_{1:t} \in \overline{\Omega}_{1:t}$, the sets 
    \[
        \cpl\big( \P^1_{t+1,\omega^\smallfont{1}_{\smallfont{1}\smallfont{:}\smallfont{t}}},\ldots,\P^N_{t+1,\omega^\smallfont{N}_{\smallfont{1}\smallfont{:}\smallfont{t}}}\big) \quad {\rm and} \quad \cpl\big( \P^1_{1},\ldots,\P^N_{1}\big)
    \] 
    are compact, which remains true under the refined topology introduced in \Cref{lem:topology}. Thus, since $c$ is lower-bounded and lower-semicontinuous, we can inductively backwards in time verify that the map $\overline{\omega}_{1:t} \longmapsto V(t,\overline{\omega}_{1:t})$ is lower-bounded and lower-semicontinuous for every $t \in \{1,\ldots,T\}$. Further, using the measurable selection theorem \cite[Corollary 1]{MeasSel}, we conclude that for any $t \in \{0,\ldots,T-1\}$ there exist measurable minimizers for each problem $V(t,\overline{\omega}_{1:t})$, $t \in \{1,\ldots,T-1\},$ and $V(0)$.
    
    \medskip
    Invoking Lemma \ref{disintegr} we have that a measure $\pi \in \Pc(\overline{\Omega})$ belongs to $\cplmc(\X^1,\ldots,\X^N)$ if and only if there exist kernels $(K_t)_{t = 1}^T$ where $K_t$ is $\overline{\Fc}_{t - 1}$-measurable and such that \eqref{eq:def_via_disintgr} holds.
    Hence, \eqref{eq:def.mcot} can be rewritten as the minimization of
    \[
        \int c\big(X^1\big(\omega^1\big),\ldots,X^N\big(\omega^N\big)\big) \mathrm{d} (K_1 \otimes \ldots \otimes K_T)(\mathrm{d} \overline{\omega}),
    \]
     over all such kernels $(K_t)_{t = 1}^T$. Thus, for any $ \pi \in \cplmc(\X^1,\ldots,\X^N)$, we clearly have \[ \int c\big(X^1\big(\omega^1\big),\ldots,X^N\big(\omega^N\big)\big) \pi(\mathrm{d}\overline{\omega})=\int c\big(X^1\big(\omega^1\big),\ldots,X^N\big(\omega^N\big)\big) \mathrm{d} (K_1 \otimes \ldots \otimes K_T)(\mathrm{d} \overline{\omega})\geq V(0). \] 
     Moreover, the inequality becomes an equality if and only if there exist kernels $K^\star_t : \overline{\Omega}_{1:t-1}\longrightarrow \Pc(\overline{\Omega}_{t}), t\in\{2,\ldots,T\}$,  and $K^\star_1\in\Pc(\overline{\Omega}_{1})$,
     such that     
     \[
        \pi^\star(\mathrm{d} \overline{\omega}) =K_1^\star(\mathrm{d}\overline{\omega}_1) \otimes\cdots \otimes K_T^\star(\overline{\omega}_{1:T-1}; \mathrm{d}\overline{\omega}_T),
    \]  and \begin{align*}
        K_t^\star(\overline{\omega}_{t-1};\,\cdot\,) &\in \arg\min \bigg\{ \int V(t+1,\overline{\omega}_{t+1}) \pi(\mathrm{d}\overline{\omega}_{t+1}) \,\bigg\vert\, \pi \in \cpl\big( \P^1_{t+1,\omega^\smallfont{1}_{\smallfont{1}\smallfont{:}\smallfont{t}}},\ldots,\P^N_{t+1,\omega^\smallfont{N}_{\smallfont{1}\smallfont{:}\smallfont{t}}}\big)\bigg\},\;t \in \{2,\ldots,T\}, \\
        K_1^\star(\,\cdot\,) &\in \arg\min \bigg\{ \int V(1,\overline{\omega}_{1}) \pi(\mathrm{d}\overline{\omega}_{1}) \,\bigg\vert\, \pi \in \cpl\big( \P^1_{1},\ldots,\P^N_{1}\big)\bigg\}.
        \end{align*} This shows $\ref{it:multicausal.dpp}$, $\ref{it:multicausal.minimizer}$ and $\ref{it:multicausal.existence}$. 
        
        \medskip As for item~$\ref{it:multicausal.valuelsc}$, let $(\X^{1,n},\ldots,\X^{N,n}) \in {\rm FP}^N,$ $n \in \N,$ be a sequence such that
        \[(\X^{1,n},\ldots,\X^{N,n}) \longrightarrow (\X^{1,\infty},\ldots,\X^{N,\infty})\quad \text{in } {\rm FP}^N.\]
        Let, for $n \in \N,$ $\pi^n \in \cplmc(\X^{1,n},\ldots,\X^{N,n})$ be an optimizer for the problem $V_c^{\rm mc}(\X^{N,n},\ldots,\X^{N,n}),$ which exists thanks to $\ref{it:multicausal.existence}$. Using \Cref{lem:multic_ip}.$\ref{itm:multic_ip1}$, we have that \[\gamma^n \coloneqq {\rm  Law}_{\pi^n}({\rm ip} (\X^{1,n}),\ldots,{\rm ip}(\X^{N,n})) \in \cplmc({\rm ip} (\X^{1,n}),\ldots,{\rm ip}(\X^{N,n})), \quad n \in \N.\] 
        By \Cref{lem:comp_multic}, up to passing to a subsequence,  there is a weak limit $\gamma^\infty \in \cplmc({\rm ip} (\X^{1,\infty}),\ldots,{\rm ip}(\X^{N,\infty}))$ of the sequence $(\gamma^n)_{n \in \N}$, which moreover  satisfies $\gamma^\infty={\rm Law}_{\pi^{\infty}}({\rm ip} (\X^{1,\infty}),\ldots,{\rm ip}(\X^{N,\infty}))$ for some $\pi^\infty \in \cplmc(\X^{1,\infty},\ldots,\X^{N,\infty})$ by \Cref{lem:multic_ip}.$\ref{itm:multic_ip2}$.
        It then follows that
        \begin{align} \label{eqn:lsc}
        \begin{split}
        V_c^{\rm mc}(\X^{1,\infty},\ldots,\X^{N,\infty}) &\leq \int c(X^{1,\infty}(\omega^{1}),\ldots,X^{N,\infty}(\omega^N)) \pi^{\infty}(\mathrm{d}\omega^1,\ldots,\mathrm{d}\omega^N)\\
        &=\int c(x^1,\ldots,x^N) \gamma^{\infty}(\mathrm{d}(x^1,p^1),\ldots,\mathrm{d}(x^N,p^N)) \\
        &\leq \liminf_{n \rightarrow \infty} \int c(x^1,\ldots,x^N) \gamma^{n}(\mathrm{d}(x^1,p^1),\ldots,\mathrm{d}(x^N,p^N)) \\
        &=\liminf_{n \rightarrow \infty} \int c(X^{1,n}(\omega^{1}),\ldots,X^{N,n}(\omega^N)) \pi^{n}(\mathrm{d}\omega^1,\ldots,\mathrm{d}\omega^N)\\
        &=\liminf_{n \rightarrow \infty} V_c^{\rm mc}(\X^{N,n},\ldots,\X^{N,n}),
        \end{split}
        \end{align} where we have used lower-semicontinuity and lower-boundedness of $c$ in the third line. This concludes the proof.
\end{proof}

\begin{remark} Existence of a solution can be achieved by standard arguments using compactness and lower-semicontinuity. Indeed, {\rm\Cref{lem:compactness}} gives compactness of the set of admissible couplings, which in turn together with lower-boundedness and lower-semicontinuity of $c$ yields existence of a minimizer. 
\end{remark}

\subsection{Duality}
In this section, we study the dual representation of the multicausal transport problem. For this, we need to introduce the following set of functions, which will be used to test multicausality of couplings, see \Cref{testfunc}. We set $\mathfrak{F}^{{\rm mc}}\coloneqq \bigoplus_{i=1}^N \mathfrak{F}^{i,{\rm mc}}$, where, for $i \in \{1,\ldots,N\}$,
\begin{multline*}
\mathfrak{F}^{i,{\rm mc}}\coloneqq \bigg\lbrace
F^i: \overline{\Omega} \longrightarrow \R\,\bigg\vert\, F^i(\overline{\omega})=\sum_{t=2}^{T} \bigg[ a^i_t \big((\omega^j_{1:t-1})_{j \neq i},\omega^i_{1:t}\big)-\int a_t^i((\omega^j_{1:t-1})_{j \neq i},\omega^i_{1:t-1},\tilde{\omega}^i_t)\,\P^i_{t,\omega^i_{\smallfont{1}\smallfont{:}\smallfont{t}\smallfont{-}\smallfont{1}}}(\mathrm{d}\tilde{\omega}^i_{t} )  \bigg], \\
a_t^i \in \bigcap_{\pi \in \cplmc(\X^1,\ldots,\X^N)}\L^1(\pi),\; t \in \{ 2,\ldots,T \} 
\bigg\rbrace.
\end{multline*}

Note that each function $F \in \mathfrak{F}^{{\rm mc}}$  is associated with a process $(F_t)_{t = 2}^T$ where $F_T = F$ and for $t \in \{2,\ldots,T$\},
\[ F_t(\overline{\omega}) \coloneqq \sum_{s=2}^{t}\sum_{i=1}^N \bigg[ a^i_s \big((\omega^j_{1:s-1})_{j \neq i},\omega^i_{1:s}\big)-\int a_s^i((\omega^j_{1:s-1})_{j \neq i},\omega^i_{1:s-1},\tilde{\omega}^i_s)\,\P^i_{s,\omega^i_{\smallfont{1}\smallfont{:}\smallfont{s}\smallfont{-}\smallfont{1}}}(\mathrm{d}\tilde{\omega}^i_{s} )  \bigg], \]
which is a $\pi$-martingale for any $\pi \in \cplmc(\X^1,\ldots,\X^N).$

\begin{lemma} \label{testfunc} Let $\pi \in {\rm cpl}(\X^1,\ldots,\X^N).$ Then $\pi \in {\rm cpl}_{\rm mc}(\X^1,\ldots,\X^N)$ if and only if
\[\sup_{F \in \mathfrak{F}^{{\rm mc}}}\int F \mathrm{d}\pi=0 \;\text{or, equivalently, } \sup_{F \in \mathfrak{F}^{{\rm mc}}}\int F \mathrm{d}\pi< \infty.\]
\end{lemma}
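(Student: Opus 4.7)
The plan is to first observe the basic linear-algebraic structure of $\mathfrak{F}$, then settle the easy direction via a martingale identity, and finally use a rich family of product test functions to invert the implication.

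Step~1 (preliminary observations). The set $\mathfrak{F}$ is a vector space: scaling any $h^i_t$ by $\lambda\in\R$ scales $F$ by $\lambda$, and summing the $h^i_t$ of two elements of $\mathfrak{F}$ produces their sum. Moreover $0\in\mathfrak{F}$ (take all $h^i_t\equiv 0$), so $\sup_{F\in\mathfrak{F}}\int F\,d\pi\ge 0$. Since $F\in\mathfrak{F}$ implies $-F\in\mathfrak{F}$, if some $F$ gives $\int F\,d\pi=a\neq 0$, scaling yields an unbounded supremum. Hence the supremum is finite if and only if $\int F\,d\pi=0$ for every $F\in\mathfrak{F}$, which takes care of the equivalence between the two conditions on the right.

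Step~2 (multicausality $\Rightarrow$ zero supremum). Assume $\pi\in\cplmc(\X^1,\ldots,\X^N)$. By the disintegration characterization in Lemma~\ref{disintegr}, the regular conditional law of $\omega^i_t$ given $\pmb{\Fc}_{t-1}$ under $\pi$ equals $\P^i_{t,\omega^i_{1:t-1}}$ for every $i$ and every $t\in\{2,\ldots,T\}$. For a generic summand in the definition of $F$, the function $h^i_t$ is $\pmb{\Fc}_{(t-1)}\vee\sigma(\omega^i_t)$-measurable with $(\omega^j_{1:t-1})_{j\neq i}$ and $\omega^i_{1:t-1}$ being $\pmb{\Fc}_{t-1}$-measurable, so
\[
\E_\pi\!\bigl[h^i_t\bigl((\omega^j_{1:t-1})_{j\neq i},\omega^i_{1:t}\bigr)\bigm|\pmb{\Fc}_{t-1}\bigr]=\int h^i_t\bigl((\omega^j_{1:t-1})_{j\neq i},\omega^i_{1:t-1},\tilde\omega^i_t\bigr)\,\P^i_{t,\omega^i_{1:t-1}}(\mathrm{d}\tilde\omega^i_t),
\]
$\pi$-a.s. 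Using $h^i_t\in L^1(\pi)$ to justify the conditional expectation, each bracketed summand in $F$ has vanishing $\pmb{\Fc}_{t-1}$-conditional expectation, hence vanishing expectation, giving $\int F\,d\pi=0$.

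Step~3 (zero supremum $\Rightarrow$ multicausality). Assume $\int F\,d\pi=0$ for all $F\in\mathfrak{F}$. Fix $i\in\{1,\ldots,N\}$ and $t\in\{2,\ldots,T\}$ and take all $h^j_s\equiv 0$ except $h^i_t(\pmb{\omega})=\phi\bigl((\omega^j_{1:t-1})_{j\neq i},\omega^i_{1:t-1}\bigr)\,g(\omega^i_t)$ with $\phi,g$ bounded measurable. This $h^i_t$ is bounded, hence in $\bigcap_\pi L^1(\pi)$, and the corresponding $F$ reads
\[
F(\pmb{\omega})=\phi\bigl((\omega^j_{1:t-1})_{j\neq i},\omega^i_{1:t-1}\bigr)\Bigl[g(\omega^i_t)-\int g(\tilde\omega^i_t)\,\P^i_{t,\omega^i_{1:t-1}}(\mathrm{d}\tilde\omega^i_t)\Bigr].
\]
The assumption $\int F\,d\pi=0$ and the fact that $\phi$ ranges over all bounded $\pmb{\Fc}_{t-1}$-measurable functions (since the coordinates $(\omega^j_{1:t-1})_{j\neq i},\omega^i_{1:t-1}$ generate $\pmb{\Fc}_{t-1}$) then yield
\[
\E_\pi[g(\omega^i_t)\mid\pmb{\Fc}_{t-1}]=\int g\,\mathrm{d}\P^i_{t,\omega^i_{1:t-1}},\qquad \pi\text{-a.s.},
\]
for every bounded measurable $g$. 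Thus the $i$-th marginal of the $\pi$-kernel $K_t(\pmb{\omega}_{1:t-1};\,\cdot\,)$ is $\P^i_{t,\omega^i_{1:t-1}}$ for every $i$ and $t$. An analogous argument at $t=1$ (using $\phi\equiv 1$ in the trivial way) gives the same property for $K_1$. Invoking Lemma~\ref{disintegr}, this identification of the marginals of all kernels $K_t$ is precisely what characterizes $\pi\in\cplmc(\X^1,\ldots,\X^N)$.

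The main obstacle is Step~3, where one has to ensure that the class of product test functions $\phi\cdot g$ already exhausts the information needed to pin down the conditional kernels; the measure-theoretic cleanliness of this step relies on the observation that bounded $h^i_t$ lie in $\bigcap_\pi L^1(\pi)$ automatically and on the disintegration characterization of multicausality provided by Lemma~\ref{disintegr}.
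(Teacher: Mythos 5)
Your proof is correct and carries out the standard test-function argument that the paper itself does not write out but simply cites (\citeauthor*{BaBeLiZa17} \cite[Proposition 2.4]{BaBeLiZa17}): Step~1 is the vector-space/scaling observation, Step~2 is the martingale identity under multicausality, and Step~3 inverts via bounded product test functions $\phi\cdot g$ together with \Cref{disintegr}. One small clean-up: the sum defining $\mathfrak{F}$ starts at $t=2$, so there is no ``analogous argument at $t=1$'' to be made from $\mathfrak{F}$; the constraint $K_1\in\cpl(\P^1_1,\ldots,\P^N_1)$ holds automatically from the standing hypothesis $\pi\in\cpl(\X^1,\ldots,\X^N)$ and needs no test functions.
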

\begin{proof} The statement follows from standard arguments, see \citeauthor*{BaBeLiZa17} \cite[Proposition 2.4]{BaBeLiZa17}.
\end{proof}

\medskip
Let us consider the following problem
\begin{multline*}
 D_c^{\rm mc}\coloneqq\sup \bigg\{ \sum_{i=1}^N \int f^i(\omega^i) \P^i(\mathrm{d}\omega^i) \,\bigg\vert\,  (f^i)_{i=1}^N \in \prod_{i=1}^N \L^1(\P^i): \\ \exists F \in \mathfrak{F}^{{\rm mc}} \text{ such that } \bigoplus_{i=1}^N f^i \leq c(X^1,\ldots, X^N)+F \bigg\}.
 \end{multline*} 

\begin{remark} We immediately obtain the weak duality result $D_c^{{\rm mc}} \leq V_c^{\rm mc}.$ Indeed, we have
\[  \sum_{i=1}^N \int f^i \mathrm{d}\P^i= \int \Big(\bigoplus_{i=1}^N f^i \Big)\mathrm{d}\pi \leq \int (c+F) \mathrm{d}\pi = \int c \mathrm{d}\pi, \] for any $\pi \in \cplmc(\X^1,\ldots,\X^N)$ and $f^i \in \L^1(\P^i)$ such that the inequality $\bigoplus_{i=1}^N f^i \leq c(X^1,\ldots,X^N) +F$ holds $\cplmc(\X^1,\ldots,\X^N)$--quasi-surely for some $F \in \mathfrak{F}^{{\rm mc}}.$
\end{remark}

The main theorem of this section follows.

\begin{theorem} \label{thm:dual_multicausal} Let $c :\Xc^{N} \longrightarrow \R$ be measurable and assume that there exist $\ell^i \in \L^1({\rm Law}_{\P^i}(X^i)),$ $i\in \{1,\ldots,N\}$, such that  $c \leq \bigoplus_{i=1}^N \ell^i.$ Then $D_c^{{\rm mc}}=V_c^{\rm mc}.$ Moreover, if either side is finite, then $D_c^{{\rm mc}}$ is attained.
\end{theorem}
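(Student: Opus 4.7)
The weak inequality $D_c^{\rm mc}\leq V_c^{\rm mc}$ is already observed in the preceding remark, so the task is to construct dual potentials attaining the primal value. The strategy is to combine the dynamic programming principle established in \Cref{thm:multicausal}-\ref{it:multicausal.dpp} with classical (static) multimarginal Kantorovich duality applied at each DPP step, and then to assemble the resulting one-step potentials into an element of $\mathfrak F$ via a telescoping identity. I would first build the value process $V(t,\pmb\omega_{1:t})$ as in the DPP. The upper bound $c\leq\bigoplus_{i=1}^N\ell^i$ propagates backward in time: feeding the product coupling into the DPP at each recursion step yields $V(t,\pmb\omega_{1:t})\leq \sum_{i=1}^N\E_{\P^i}[\ell^i(X^i)\mid\Fc^i_t](\omega^i_{1:t})$, so every value map is sandwiched between $-\infty$ and an integrable function. \Cref{lem:topology} further ensures that the kernels $\omega^i_{1:t-1}\mapsto \P^i_{t,\omega^i_{1:t-1}}$ may be taken continuous, which, as in the proof of \Cref{thm:multicausal}, transfers lower-semicontinuity of the value maps through the backward recursion.

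Next, for each fixed $\pmb\omega_{1:t-1}$ the DPP identity
\[ V(t-1,\pmb\omega_{1:t-1}) = \inf_{\pi\in\cpl(\P^1_{t,\omega^\smallfont{1}_{\smallfont{1}\smallfont{:}\smallfont{t}\smallfont{-}\smallfont{1}}},\ldots,\P^N_{t,\omega^\smallfont{N}_{\smallfont{1}\smallfont{:}\smallfont{t}\smallfont{-}\smallfont{1}}})} \int V(t,\pmb\omega_{1:t-1},\pmb\omega_t)\,\pi(\mathrm{d}\pmb\omega_t) \]
is a standard $N$-marginal optimal transport problem with measurable cost bounded above by an integrable sum. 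Classical Kantorovich duality in its multimarginal form (e.g.\ Kellerer) yields dual optimizers $h^i_t(\pmb\omega_{1:t-1},\cdot)\in\L^1(\P^i_{t,\omega^i_{1:t-1}})$ with
\[ \bigoplus_{i=1}^N h^i_t(\pmb\omega_{1:t-1},\omega^i_t)\leq V(t,\pmb\omega_{1:t-1},\pmb\omega_t), \qquad \sum_{i=1}^N\int h^i_t(\pmb\omega_{1:t-1},\tilde\omega^i_t)\,\P^i_{t,\omega^i_{1:t-1}}(\mathrm{d}\tilde\omega^i_t) = V(t-1,\pmb\omega_{1:t-1}), \]
and analogously at $t=1$ one obtains potentials $h^i_1(\omega^i_1)$. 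Iterating the backward inequality and summing the telescoping zero-mean differences produces, pointwise,
\[ \sum_{i=1}^N h^i_1(\omega^i_1) + \sum_{i=1}^N\sum_{t=2}^T \bigg[ h^i_t\bigl((\omega^j_{1:t-1})_{j\neq i},\omega^i_{1:t}\bigr) - \int h^i_t\bigl((\omega^j_{1:t-1})_{j\neq i},\omega^i_{1:t-1},\tilde\omega^i_t\bigr)\,\P^i_{t,\omega^i_{1:t-1}}(\mathrm{d}\tilde\omega^i_t)\bigg] \leq c(X^1,\ldots,X^N). \]
Setting $f^i\coloneqq h^i_1$ and taking $F\in\mathfrak F$ to be the negative of the double sum (which belongs to $\mathfrak F$ after flipping the sign of the $h^i_t$) yields $\bigoplus_{i=1}^N f^i\leq c(X^1,\ldots,X^N)+F$ everywhere, with $\sum_{i=1}^N\int f^i\,\mathrm{d}\P^i = V(0) = V_c^{\rm mc}$. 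This delivers both $D_c^{\rm mc}\geq V_c^{\rm mc}$ and attainment in one stroke.

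The main obstacle I anticipate is ensuring that the dual potentials can be chosen jointly measurable in $\pmb\omega_{1:t-1}$ while each $h^i_t$ individually lies in $\bigcap_{\pi\in\cplmc(\X^1,\ldots,\X^N)}\L^1(\pi)$, as demanded by the definition of $\mathfrak F$. Kantorovich duality controls the sum $\sum_i h^i_t$ only up to free additive constants that shift mass between marginals, so I would normalize each $h^j_t$ for $j\neq 1$ to vanish at a measurably chosen reference point, obtaining an individual upper bound by the conditional expectation of $\ell^j$, and then invoke a measurable selection theorem (e.g.\ Kuratowski--Ryll-Nardzewski, or \cite{MeasSel} as used in the proof of \Cref{thm:multicausal}) to make the entire construction jointly measurable in the history. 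Everything else is bookkeeping for the backward recursion.
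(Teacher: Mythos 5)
The paper's ``proof'' of \Cref{thm:dual_multicausal} is a single-line citation to \cite[Theorem 4.15]{KrPa23}, so you are necessarily constructing an argument the paper does not spell out. Your high-level strategy --- dynamic programming plus static multimarginal Kantorovich duality at each step, then telescoping the one-step potentials into an element of $\mathfrak{F}$ --- is the natural bicausal-style route (\emph{cf.}~\citeauthor*{BaBeLiZa17}~\cite{BaBeLiZa17}) and, with appropriate regularity on $c$, it can be made to work. But as written there are two genuine gaps.

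First, and most seriously, there is a hypothesis mismatch. \Cref{thm:dual_multicausal} assumes only that $c$ is \emph{measurable}, taking values in $\R \cup \{-\infty\}$, and bounded above by $\bigoplus_i \ell^i$; in particular $c$ may fail to be lower-semicontinuous and may be unbounded below. The dynamic programming principle \Cref{thm:multicausal}-\ref{it:multicausal.dpp}, which your construction relies on to define the value process $V(t,\cdot)$ and to propagate lower-semicontinuity backwards, was proved under the \emph{stronger} assumption that $c$ is lower-semicontinuous and lower-bounded. You cannot invoke that DPP here without first extending it to merely measurable costs (with universally measurable value maps and selections \emph{\`a la} Bertsekas--Shreve), and once the value maps are no longer l.s.c.\ your appeal to classical Kantorovich duality with dual attainment at each step becomes substantially more delicate --- Kellerer-type duality for Borel costs gives the value but attainment with $\L^1$ potentials and the integrable upper bound requires care. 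This is presumably exactly why the paper reaches for the abstract duality machinery of \cite{KrPa23} rather than running the DPP.

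Second, you flag the jointly measurable selection of the one-step dual potentials $(h^i_t(\pmb\omega_{1:t-1},\cdot))_i$ and dismiss the rest as ``bookkeeping,'' but this is where much of the actual work lives. You need, simultaneously: (a) a measurable-in-$\pmb\omega_{1:t-1}$ selection of optimal Kantorovich potentials for an $N$-marginal problem whose cost and marginals both vary with $\pmb\omega_{1:t-1}$; (b) control on each $h^i_t$ \emph{individually} (not just on the sum), normalized so that each lies in $\bigcap_{\pi\in\cplmc(\X^1,\ldots,\X^N)}\L^1(\pmb{\Fc}_T,\pi)$ as $\mathfrak{F}$ requires; and (c) compatibility of the normalization across $t$ so the telescoping sum stays integrable. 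None of these is supplied by Kuratowski--Ryll-Nardzewski or \cite{MeasSel} off the shelf, since what you must select measurably is not a point in a parametrized compact set but a tuple of functions attaining a variational problem with only the sum pinned down. The paper's reliance on \cite[Theorem~4.15]{KrPa23} --- which proves the dual equality and attainment directly via a capacitability/convex-duality argument on the space of quasi-sure test functions rather than by iterated one-step Kantorovich duality --- sidesteps precisely these two obstacles.
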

\begin{proof}
    This result is covered by \cite[Theorem 4.13]{KrPa23}.
\end{proof}

\section{Bicausal barycenters} \label{sec:AWbary}
This section is concerned with the problem of barycenters in bicausal setting. To have an application in mind,
suppose there are $N$ experts whose different believes are summarized by the filtered processes $\X^1,\dots,\X^N$. 
Since we trust each of these experts, we want to merge their believes and find a consensus model $\Y$.
Therefore, we want to find a filtered process that is in a suitable sense close to the experts' beliefs.
The latter can be adequately formalized by working with adapted Wasserstein distances, which thus naturally leads to the following general setting.

\medskip Let $\Yc=\prod_{t=1}^T\Yc_{t}$ be a product of Polish spaces $\Yc_t,\; t \in \{1,\ldots,T\},$ endowed with the metric $d_{\Yc}(y,\tilde y)\coloneqq \sum_{t=1}^T d_{\Yc_t}(y_t,\tilde y_t),$ for $(y, \tilde y)=(y_{1:T},\tilde y_{1:Y}) \in \Yc^2.$
Let further $c^i : \Xc \times \Yc \longrightarrow \R$, $i \in \{1,\ldots,N \}$, be given measurable functions. For fixed filtered processes $\X^1,\ldots, \X^N$, we proceed to study the barycenter problem 
\begin{equation} \label{eq.bc_bary}
B_{c^{1:N}}^{\rm bc}\coloneqq\inf_{\Y\in {\rm FP}(\Yc)} B_{c^{1:N}}^{\rm bc}(\Y), \end{equation} 
where
\[ B_{c^{1:N}}^{\rm bc}(\Y)\coloneqq \sum_{i=1}^N \Ac\Wc_{c^i}(\X^i,\Y), \quad \Y \in {\rm FP}(\Yc), \]
and where $\Ac\Wc_{c^i}$ is defined analogously to \eqref{eqn:def_AW_CW}. Here, ${\rm FP}(\Yc)$ as before denotes the factor space of all filtered processes 
\[
    \Y = \Big( \Omega^\Y, \Fc^\Y,  \F^\Y, \P^\Y, Y \Big)
\] 
such that $Y_t$ takes values in $\Yc_t$ for every $t \in \{1,\ldots,T\}.$ 
For $p \ge 1$ we shall similarly as before denote by ${\rm FP}_p(\Yc)$ the subset of filtered processes with finite $p$-th moments. 

\begin{remark} Our setting clearly includes the case $\Yc = \Xc$, which is the natural space for seeking barycenters. However, for clarity of presentation and to keep full generality, we consider a more general space $\Yc$, as certain assumptions will be imposed on $\Yc$ but are not required for $\Xc$.
\end{remark}

\begin{remark}
    As for the barycenter problem in classical optimal transport, finding explicit solutions for the barycenter problem \eqref{eq.bc_bary} is in general a non-trivial task.
    In the classical case there are two prominent instances where the barycenter is explicitly known, for $c^i(x,y) = \|x - y\|^p$, $x,y \in \R^d$.
    The first one is the case when $N = 2$, as in this case the barycenter can be constructed via displacement interpolation of the $\mathcal W_p$-optimal coupling.
    The second case is when $N$ is arbitrary, $p = 2$, $\|\cdot\|$ is the Euclidean norm, and the marginals consist only of Gaussian measures.
    Then the barycenter is again Gaussian with covariance matrix implicitly given as the solution to a fixed-point equation.

    \medskip For the bicausal barycenter problem, when $N = 2$ and $c^i(x,y) = \|x - y\|^p$, $x,y \in \R^{d T}$, one similarly has that the barycenter can be constructed via displacement interpolation of the $\mathcal{AW}_p$-optimal coupling.
    For more details see {\rm\cite[Section 5.4]{BaBePa18}}.
    Recently, the bicausal transport problem between Gaussians was studied in {\rm\cite{GuWo24, AcHoPa24}}, while the study of the bicausal barycenter problem between Gaussians is ongoing research. 
\end{remark}

\subsection{Existence of primal optimizers}
Let us first deal with existence of solutions to the barycenter problem. One can easily see that in general there is no reason for a solution to exist. In order to obtain existence of solutions, we will assume the following conditions.

\begin{assumption} \label{assAdaptBary} 
We assume that: 
\begin{enumerate} [label = (\roman*)]
    \item \label{assAdaptItem1} the functions $c^i$ are lower-semicontinuous and lower-bounded for every $i \in \{1, \ldots, N \};$
    \item \label{assAdaptItem3} there exists $y^0 \in \Yc$ such that the set $ \{ y \in \Yc \,\vert\, d_\Yc(y^0,y)\leq K \}$ is compact for every $K \geq 0.$ Moreover,  the deterministic process $\Y^0 \in {\rm FP}(\Yc)$ such that $Y^0\equiv y^0$ satisfies $B_{c^{1:N}}^{\rm bc}(\Y^0)<\infty;$
  \item \label{assAdaptItem2} there exist $C\geq 0$ and $\ell^1 \in \L^1({\rm Law}_{\P^1}(X^1))$ such that $d_\Yc(y^0,y) \leq C\big(1+c^1(x,y)+\ell^1(x)\big)$ for all $(x,y) \in \Xc \times \Yc$.
\end{enumerate}
\end{assumption} 
\begin{remark} $(1)$ Under {\rm\Cref{assAdaptBary}}, we have in particular that $-\infty < B_{c^{1:N}}^{\rm bc} < \infty.$

\medskip $(2)$ Condition $\ref{assAdaptItem1}$ is a standard assumption in optimal transport theory and the lower-boundedness assumption can be slightly relaxed to $c^i(x,y)\geq \ell^i(x)$ for some $\ell^i \in \L^1({\rm Law}_{\P^i}(X^i)).$

\medskip
$(3)$ Conditions $\ref{assAdaptItem2}$ and $\ref{assAdaptItem3}$ ensure that we can restrict the set of admissible filtered processes and use compactness arguments to obtain existence.

\medskip $(4)$ If $\Yc$ is a Banach space, then condition $\ref{assAdaptItem3}$ is satisfied if and only if $\Yc$ is finite-dimensional.
\end{remark}

\begin{theorem} \label{AdaptBaryExist} Under {\rm\Cref{assAdaptBary}}, there exists $\bar{\Y} \in {\rm FP}_1(\Yc)$ such that $B_{c^{1:N}}^{\rm bc}=B_{c^{1:N}}^{\rm bc}(\bar{\Y}).$
\end{theorem}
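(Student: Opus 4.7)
The plan is to proceed by the direct method of the calculus of variations: extract a minimizing sequence, use the coercivity provided by Assumption~\ref{assAdaptBary}\ref{assAdaptItem2}--\ref{assAdaptItem3} together with optimal bicausal couplings to obtain tightness on the path space $\Xc^{N+1}$ via a gluing argument, and pass to a weak limit, leaving the delicate point of preserving bicausality for the last.

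First I would verify $B_{c^{1:N}}^{\rm bc}\in\R$: Assumption~\ref{assAdaptBary}\ref{assAdaptItem3} gives the upper bound $B_{c^{1:N}}^{\rm bc}\leq B_{c^{1:N}}^{\rm bc}(\X^0)<\infty$, while a uniform lower bound $c^i\geq\kappa$ (from Assumption~\ref{assAdaptBary}\ref{assAdaptItem1}) yields $B_{c^{1:N}}^{\rm bc}\geq N\kappa>-\infty$. Let $(\X^{(n)})_n\subset\FP$ be a minimizing sequence. For each $i$ and $n$, \Cref{thm:multicausal} in the bicausal case ($N=2$) provides an optimal $\pi^{i,n}\in\cplbc(\X^i,\X^{(n)})$. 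Since $\sum_j\AW_{c^j}(\X^j,\X^{(n)})=B_{c^{1:N}}^{\rm bc}(\X^{(n)})$ is bounded in $n$ and each summand is bounded below by $\kappa$, every $\AW_{c^i}(\X^i,\X^{(n)})$ is uniformly bounded. Combined with Assumption~\ref{assAdaptBary}\ref{assAdaptItem2},
\[ \E_{\pi^{1,n}}\bigl[d(X^1,X^{(n)})\bigr]\leq C\bigl(1+\AW_{c^1}(\X^1,\X^{(n)})\bigr) \]
is uniformly bounded in $n$. The same reasoning applied with $\X^0$ in place of $\X^{(n)}$ shows $\X^1\in\FP_1$, so the triangle inequality yields $\sup_n\E[d(X^{(n)},x^0)]<\infty$. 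By Assumption~\ref{assAdaptBary}\ref{assAdaptItem3}, sublevel sets of $d(\cdot,x^0)$ are compact, hence the laws ${\rm Law}(X^{(n)})$ are tight in $\Pc(\Xc)$.

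Next, without loss of generality, I would replace each $\X^{(n)}$ by its canonical representative on $(\Xc,\Bc(\Xc))$ with the canonical filtration, so that $X^{(n)}$ becomes the identity map. Invoking the classical gluing lemma iteratively, construct, for each $n$, a probability $\gamma^n$ on $\pmb\Omega\times\Xc$ whose bimarginal on $\Omega^i\times\Xc$ equals $\pi^{i,n}$ for every $i$. Tightness of the $\X^i$-marginals (which are fixed) and of the last marginal yields tightness of $\gamma^n$; pass to a weakly convergent subsequence $\gamma^n\rightharpoonup\gamma$. Define $\bar\P:=(\proj^{N+1})_\#\gamma$ and let $\bar\X$ be the corresponding canonical filtered process; Fatou's lemma and the uniform moment bound give $\bar\X\in\FP_1$.

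The hard part is to show that $\bar\pi^i:=(\proj^{i,N+1})_\#\gamma$ is bicausal for every $i$. Although the analogous property holds for $\pi^{i,n}$ by construction, bicausality with respect to the limit process $\bar\X$ (whose law is itself only a weak limit) is not automatic: the characterization via the test-function class $\mathfrak{F}$ (\Cref{testfunc} with $N=2$) involves integrals against the \emph{varying} disintegration kernels of $\X^{(n)}$. The strategy is to combine the weak convergence $\gamma^n\rightharpoonup\gamma$ with continuity of the disintegration maps afforded by \Cref{lem:topology}, employing a truncation of the unbounded test functions together with the uniform $\FP_1$-bound to pass to the limit. Once this closedness is granted, lower-semicontinuity of $c^i$ and Fatou's lemma give
\[ \E_{\bar\pi^i}\bigl[c^i(X^i,\bar X)\bigr]\leq\liminf_{n\to\infty}\E_{\pi^{i,n}}\bigl[c^i(X^i,X^{(n)})\bigr]=\liminf_{n\to\infty}\AW_{c^i}(\X^i,\X^{(n)}), \]
and summing over $i$ yields $B_{c^{1:N}}^{\rm bc}(\bar\X)\leq B_{c^{1:N}}^{\rm bc}$, which together with the trivial reverse inequality concludes attainment.
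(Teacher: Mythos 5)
Your overall direct-method plan is sound, and your coercivity estimate is essentially the same as the paper's computation \eqref{eqn:baryproof}. However, there are two genuine gaps.

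First, the step ``without loss of generality, I would replace each $\X^{(n)}$ by its canonical representative on $(\Xc,\Bc(\Xc))$ with the canonical filtration'' is not a valid reduction for the \emph{bicausal} barycenter. For the \emph{causal} barycenter this is legitimate (see \Cref{rem:plain}: shrinking the filtration of the second marginal enlarges $\cplc$), but for bicausal couplings the inclusion $\cplbc(\X^i,\X)\subseteq\cplbc(\X^i,\X^{\mathrm{plain}})$ generally fails, because the causality constraint \emph{from} $\X$ \emph{to} $\X^i$ becomes harder to satisfy when one conditions on less information. Indeed, \Cref{rem:filtr_bary} stresses precisely that the bicausal barycenter's filtration need not be generated by its paths, so restricting the minimization to canonical processes may miss the actual infimum.

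Second, even granting a well-posed minimizing sequence of couplings $\gamma^n\rightharpoonup\gamma$ on the path space, the ``hard part'' you flag --- that $(\proj^{i,N+1})_\#\gamma$ is bicausal with respect to the \emph{limit} process --- is exactly where the argument currently has no proof. Your sketch (continuity of kernels via \Cref{lem:topology}, truncation of test functions, uniform moment bound) gestures at the right ingredients, but the closedness of bicausality when the second marginal's law (and hence its disintegration kernels) is itself only weakly convergent is a nontrivial fact; it is essentially the content of the characterization of the adapted weak topology in \cite{BaBePa21}, which is not elementary. Stating ``once this closedness is granted'' leaves the core difficulty unresolved.

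The paper sidesteps both issues by never descending to path-space weak convergence. It works directly in $(\FP,\AW_{\tilde d})$: \Cref{thm:multicausal}-\ref{it:multicausal.valuelsc} (with $N=2$) gives lower-semicontinuity of $\X\mapsto B_{c^{1:N}}^{\rm bc}(\X)$ with respect to the adapted weak topology, and the compactness criterion of \cite[Theorem 5.1]{BaBePa21} converts your tightness estimate on ${\rm Law}_{\P^\X}(X)$ directly into relative compactness of the sublevel set $\kappa\subseteq\FP$. That criterion is precisely the ``closedness of bicausality'' packaged into a topology in which the minimization takes place; invoking it at the right level of abstraction is what makes the proof short. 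You would need to either cite that machinery (as the paper does) or supply a full proof of the closedness step, and in addition repair the canonical-representative reduction.
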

\begin{proof}
The proof follows analogous steps to those in the proof of \citeauthor*{BaBePa21} \cite[Theorem 6.7]{BaBePa21}. We obtain from \Cref{thm:multicausal} that the map $\Y \longmapsto B_{c^{1:N}}^{\rm bc}(\Y)$ is lower-semicontinuous with respect to the adapted weak topology. Hence, it is sufficient to show that $B_{c^{1:N}}^{\rm bc}=\inf_{\Y\in \Kc} B_{c^{1:N}}^{\rm bc}(\Y)$ for some set $\Kc \subseteq {\rm FP}(\Yc)$ that is relatively compact in the adapted weak topology. To this end, we assume without loss of generality that $c^i$ is non-negative for every $i \in \{1,\ldots,N\}$ and set 
\[ \Kc\coloneqq\{ \Y \in {\rm FP}(\Yc) : B_{c^{1:N}}^{\rm bc}(\Y) \leq B_{c^{1:N}}^{\rm bc} + 1  \}.\] 
Then, for every $\Y \in \Kc$, we have
\begin{align} \label{eqn:baryproof}
\begin{split}\E_{\P^\Y} [d_\Yc(y^0,y)] = \Ac\Wc_{d_{\Yc}}(\Y,\Y^0) &\leq C \big(1+ \Ac\Wc_{c^1}(\X^1,\Y) + \E_{\P^1}[\ell^1(X^1)]\big)\\
&\leq  C \big(1+B_{c^{1:N}}^{\rm bc}(\Y) + \E_{\P^1}[\ell^1(X^1)] \big) \leq C \big(2+B_{c^{1:N}}^{\rm bc} + \E_{\P^1}[\ell^1(X^1)] \big).
\end{split}
\end{align}
It follows that the set $\{ {\rm Law}_{\P^\Y}(Y) : \Y \in \Kc  \}$ is tight due to \Cref{assAdaptBary}.$\ref{assAdaptItem3}$, and hence $\Kc$ is relatively compact with respect to the adapted weak topology; see \cite[Theorem 5.1]{BaBePa21}. Thus, by standard results, there exists a process $\bar{\Y} \in {\rm FP}(\Yc)$ solving $B_{c^{1:N}}^{\rm bc}.$ The fact that $\bar{\Y} \in {\rm FP}_1$ 
follows immediately from \eqref{eqn:baryproof}. 
\end{proof}

\begin{remark} \label{AdaptBaryRemark} If we further have that $d^p_{\Yc}(y^0,y) \leq C(1+c^1(x,y)+\ell^1(x))$, $(x,y) \in \Xc\times \Yc$, for some $C \geq 0,$ $\ell^1 \in \L^1({\rm Law}_{\P^1}(X^1)) $ and $p \geq 1$, we can analogously as in \eqref{eqn:baryproof} verify that $\bar{\Y}$ in 
{\rm Theorem~\ref{AdaptBaryExist}} necessarily satisfies
$\bar{\Y} \in {\rm FP}_p(\Yc)$. 
\end{remark}

\begin{remark} In fact, one can consider a slightly more general setting. Let $\mu \in \Pc({\rm FP}_1)$ and let $c : \Xc \times \Yc \longrightarrow \R$ be measurable. If {\rm\Cref{assAdaptBary}} holds with $c^1$ replaced by $c$ and $B_{c^{1:N}}^{\rm bc}(\Y^0)<\infty$ replaced by $\int \Ac \Wc_c(\Z,\Y^0) \mu(\mathrm{d}\Z)<\infty$, one can analogously show that there exists a process $\bar{\X} \in {\rm FP}_1$ solving
\[ \inf_{\Y \in {\rm FP}(\Yc)} \int \Ac \Wc_c(\Z,\Y) \mu(\mathrm{d}\Z).\]
A similar setting was also considered by {\rm \citeauthor*{BaBePa21} \cite[Theorem 6.7]{BaBePa21}}. 
\end{remark} 

\begin{remark} Let us emphasize that even in the case where the filtration $\mathcal{F}^i$ is canonical in the sense that it is generated by the process $X^i$ for every $i \in \{1, \ldots, N\}$, there is no guarantee that there exists a barycenter $\bar \X$ with its filtration generated by the corresponding process $\bar X$; see also {\rm\Cref{subsec:bary_multi}}, in particular {\rm \Cref{rem:filtr_bary}}.
\end{remark}

\begin{example} \label{rem:baryeucnorm} An important example is the case $\Yc_t=\Xc_t=\R^d$, for some $d \in \N$, and $c^i(x,y)=\lambda^i \lVert x-y \rVert_p^p$, where $p \geq 1$ and $\lambda^i$ are positive weights satisfying $\sum_{i=1}^N \lambda^i=1.$ It is straightforward to verify that if $\X^i \in {\rm FP}_p$ for every $i \in \{1,\ldots,N\}$, then {\rm\Cref{assAdaptBary}} is satisfied with $y^0=0$ and $\ell(x)=\|x\|_p^p.$ Hence, according to {\rm\Cref{AdaptBaryExist}} and {\rm\Cref{AdaptBaryRemark}}, there exists a process $\bar{\Y} \in {\rm FP}_p$ solving \[ \inf_{\Y \in {\rm FP}}\sum_{i=1}^N \lambda^i\Ac\Wc_p^p(\X^i,\Y). \]
\end{example}

\subsection{Bicausal barycenters and multicausal OT} \label{subsec:bary_multi}
We proceed to the characterization of the barycenter problem by means of a multimarginal reformulation in the bicausal setting.

\begin{remark} The results presented in this work can be analogously derived for the barycenter problem in classical optimal transport. One thus obtains slightly more general results than the ones presented  by {\rm \citeauthor*{AgCa11} \cite{AgCa11}}, where the quadratic cost was considered.
\end{remark}

\medskip
In order to connect the barycenter problem to a multicausal problem, we introduce the following assumption on the structure of the functions $c^i$'s.

\begin{assumption} \label{ass.baryandmulti} For $i \in\ \{1,\ldots,N\}$, the functions $c^i : \Xc \times \Yc \longrightarrow \R$ are of the form
\[ 
c^i(x,y)=\sum_{t=1}^T c^i_t (x_t,y_t),\quad (x,y) \in \Xc \times \Yc,
\] 
where
$c_t^i : \Xc_t \times \Yc_t \longrightarrow \R$, $t \in \{1,\ldots,T\}$, are lower-bounded measurable functions. Moreover, the function $c: \Xc^N \longrightarrow \R$ defined by \begin{equation} \label{eqn:bary_multi_c}c(x^{1:N})\coloneqq \inf_{y \in \Yc} \sum_{i = 1}^N c^i(x^i, y)\end{equation} is Borel measurable.
\end{assumption}

\begin{example} \label{example:separ.cost} {\rm\Cref{ass.baryandmulti}} is satisfied \emph{e.g.} if $\Yc=\Xc,$ and $c^i(x,y)=d(x,y)=\sum_{t=1}^T d_t(x_t,y_t).$ Another example is the case when $\Yc_t=\Xc_t=\R^d$ for some $d \in \N$ and for every $t \in \{1,\ldots,T\}$, and $c^i(x,y)=\lVert x-y \rVert_p^p=\sum_{t=1}^T \lVert x_t-y_t \rVert_p^p$ for some $p\geq 1$. 
\end{example}

\begin{remark} In {\rm\Cref{ass.baryandmulti}} we can more generally consider $c^i$, $i \in \{1,\ldots,N\}$, of the form
\[ 
c^i(x,y)=\sum_{t=1}^T c^i_t (x_{1:t},y_t),\quad (x,y) \in \Xc \times \Yc,
\] for some lower-bounded measurable functions $c_t^i : \Xc_{1:t} \times \Yc_t \longrightarrow \R,\; t \in \{1,\ldots,T\}$, and the results of this section remain \emph{mutatis mutandis} valid. 
\end{remark}

\begin{remark} The function $c$ defined in {\rm\Cref{ass.baryandmulti}} is Borel measurable \emph{e.g.} if the function $\sum_{i=1}^N c^i$ is continuous or if {\rm\Cref{assAdaptBary}} holds, see {\rm Remarks~\ref{rem:barymultiborel}} and {\rm\ref{rem:phi0nonepmty}}.
\end{remark}
For $c$ defined in \Cref{ass.baryandmulti}, let us recall that we denote by $V_{c}^{\rm mc}$ the following multicausal OT problem: 
\begin{equation}\label{eq.Vmci}
    V_{c}^{\rm mc}=\inf_{\pi \in \cplmc(\X^1,\ldots,\X^N)} \E_\pi \big[c (X^1,\ldots,X^N)\big].
\end{equation}
The aim of this section is to link  problem $V_{c}^{\rm mc}$ to the bicausal barycenter problem.

\begin{remark} \label{rem:univmeas}
Under {\rm \Cref{ass.baryandmulti}}, there exists for every $t \in \{1,\ldots, T\}$ and $\varepsilon>0$ a universally measurable function $\phi^\varepsilon_t : \Xc_t^N \longrightarrow \Yc_t$ satisfying, for any $x_t^{1:N} \in \Xc_t^N$,  
\begin{align}\label{eqn:meas.minim} \sum_{i=1}^N c^i_t (x_t^i,\phi^\varepsilon_t(x_t^{1:N})) \leq\inf_{y_t \in \Yc_t} \sum_{i=1}^N c^i_t (x^i_t,y_t)+\varepsilon;
\end{align} 
see {\rm\cite[Theorem 2]{MeasSel}}. Moreover, for any $\pi \in \cplmc(\X^1,\ldots,\X^N)$, thanks to {\rm\cite[Lemma 1.2]{crauel2002}} there exists a Borel measurable map $\phi^{\varepsilon,\pi}_t$ such that $\phi^{\varepsilon,\pi}_t=\phi^{\varepsilon}_t$ holds $\pi$--almost surely.

\medskip
For any $\pi \in \cplmc(\X^1,\ldots,\X^N)$, we denote by $\phi^{\varepsilon,\pi}$ the map $\phi^{\varepsilon,\pi}\coloneqq(\phi^{\varepsilon,\pi}_t)_{t=1}^T : \Xc^N \longrightarrow \Yc$ and define the following filtered process
\[\Y^{\pi,\phi^{\varepsilon,\pi}}\coloneqq\left(\overline{\Omega},\overline{\Fc}_T,\overline{\F}, \pi, Y^{\phi^{\varepsilon,\pi}} \right),\]
where $Y^{\phi^{\varepsilon,\pi}}$ is given by $Y^{\phi^{\varepsilon,\pi}}\coloneqq(\phi^{\varepsilon,\pi}_t(Y^1_t,\ldots,Y^N_t))_{t=1}^T.$ Note that $Y^{\phi^{\varepsilon,\pi}}$ is $\overline{\F}$-adapted.
\end{remark}

We first show that the values of the multicausal and barycenter problems coincide. 

\begin{lemma} \label{lem:barymultilemma} Under {\rm\Cref{ass.baryandmulti}}, the multicausal problem \eqref{eq.Vmci} and the barycenter problem \eqref{eq.bc_bary} have the same value, that is, $B_{c^{1:N}}^{\rm bc}= V_{c}^{\rm mc}.$
\end{lemma}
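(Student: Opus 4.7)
The plan is to establish the two inequalities $V_{c}^{\rm mc} \leq B_{c^{1:N}}^{\rm bc}$ and $V_{c}^{\rm mc} \geq B_{c^{1:N}}^{\rm bc}$ separately, via a gluing construction for the first and the measurable selector $\phi^{\varepsilon,\pi}$ from the preceding remark for the second. In both cases one may assume the quantities are finite; the $-\infty$ situation is handled by standard limiting arguments which I gloss over.

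For $V_{c}^{\rm mc} \leq B_{c^{1:N}}^{\rm bc}$, I would fix an arbitrary competitor $\X \in {\rm FP}$ and $\varepsilon > 0$, and choose for each $i$ a coupling $\pi^i \in \cplbc(\X^i,\X)$ with $\E_{\pi^i}[c^i(X^i, X)] \leq \AW_{c^i}(\X^i, \X) + \varepsilon/N$. Disintegrating $\pi^i(d\omega^i, d\omega) = \P^\X(d\omega) \otimes \pi^i_\omega(d\omega^i)$ and gluing over the common marginal, I set
\[ \pi^\star(d\omega^1,\ldots,d\omega^N,d\omega) := \P^\X(d\omega) \otimes \bigotimes_{i=1}^N \pi^i_\omega(d\omega^i) \]
on $\pmb \Omega \times \Omega^\X$, and let $\pi$ be the marginal of $\pi^\star$ on $\pmb\Omega$. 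By construction $\pi$ has marginals $\P^i$. The key technical step is to verify that $\pi \in \cplmc(\X^1,\ldots,\X^N)$. Exploiting the conditional independence of $(\omega^i)_i$ given $\omega$ under $\pi^\star$ together with the causal direction $\X \to \X^i$ of each bicausal $\pi^i$ (namely $\pi^i[\omega^i_{t+1:T} \,|\, \omega^i_{1:t}, \omega_{1:t}] = \pi^i[\omega^i_{t+1:T} \,|\, \omega^i_{1:t}]$), a conditional-expectation computation shows
\[ \E_\pi[f(\omega^i_{t+1:T}) \,|\, \pmb \omega_{1:t}] = \E_\pi[f(\omega^i_{t+1:T}) \,|\, \omega^i_{1:t}] \]
for every bounded measurable $f$, which is precisely the multicausality condition. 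Since $c(x^{1:N}) \leq \sum_i c^i(x^i, X(\omega))$ by the definition \eqref{eqn:bary_multi_c} of $c$, I conclude
\[ V_{c}^{\rm mc} \leq \int c\, d\pi \leq \int \sum_{i=1}^N c^i(X^i, X)\, d\pi^\star = \sum_{i=1}^N \E_{\pi^i}[c^i(X^i, X)] \leq B_{c^{1:N}}^{\rm bc}(\X) + \varepsilon. \]
Letting $\varepsilon \to 0$ and taking the infimum over $\X$ yields the claim.

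For the reverse inequality $V_{c}^{\rm mc} \geq B_{c^{1:N}}^{\rm bc}$, fix $\varepsilon > 0$ and a nearly optimal $\pi \in \cplmc(\X^1,\ldots,\X^N)$, and consider the filtered process $\X^{\pi,\phi^{\varepsilon,\pi}}$ constructed in the preceding remark. The crucial observation is that, for each $i$, the measure $\tilde \pi^i(d\omega^i, d\pmb \omega) := \pi(d\pmb \omega) \otimes \delta_{\proj^i \pmb \omega}(d\omega^i)$ is a bicausal coupling of $\X^i$ and $\X^{\pi, \phi^{\varepsilon, \pi}}$: causality in the direction $\X^i \to \X^{\pi, \phi^{\varepsilon, \pi}}$ follows directly from the multicausality of $\pi$, while causality in the opposite direction is trivial because $\omega^i_{1:t}$ is a measurable function of $\pmb \omega_{1:t}$. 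Hence $\AW_{c^i}(\X^i, \X^{\pi, \phi^{\varepsilon, \pi}}) \leq \E_\pi[c^i(X^i, X^{\phi^{\varepsilon, \pi}})]$, and summing over $i$ while invoking \eqref{eqn:meas.minim} gives $B_{c^{1:N}}^{\rm bc}(\X^{\pi, \phi^{\varepsilon, \pi}}) \leq \E_\pi[c(X^1, \ldots, X^N)] + T\varepsilon$, with the set where the defining infimum equals $-\infty$ handled by the second branch of \eqref{eqn:meas.minim}. Sending $\varepsilon \to 0$ and taking the infimum over $\pi$ completes the proof.

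The main obstacle is the multicausality verification in the first direction. The construction, a conditionally independent gluing over the common barycenter, is natural, but the verification requires a careful use of both causal directions of each $\pi^i$ together with the way $\omega_{t+1:T}$ decouples from $\pmb\omega_{1:t}$ given $\omega_{1:t}$ under $\pi^\star$. The remaining parts of the proof consist of routine conditional-expectation manipulations and an appeal to the measurable selection embedded in $\phi^{\varepsilon,\pi}$.
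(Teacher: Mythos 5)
Your proposal follows the same two-inequality strategy as the paper: for $V_c^{\rm mc}\le B_{c^{1:N}}^{\rm bc}$ you glue the bicausal couplings $\pi^i$ over their common barycenter marginal and observe that the $\pmb\Omega$-marginal of the glued measure is multicausal, and for the converse you use the approximate selector $\phi^{\varepsilon,\pi}$ and check that $(\proj^i,{\rm id})_\#\pi$ is bicausal between $\X^i$ and $\X^{\pi,\phi^{\varepsilon,\pi}}$. This is precisely the paper's proof; the only difference is that the paper delegates the "gluing preserves multicausality" step to its Appendix (\Cref{lemma:glueing}, proved via \Cref{lem:mcgluing} and a Kallenberg conditional-independence argument) rather than verifying it inline as you sketch, and you are right that the verification genuinely needs both causal directions of each $\pi^i$ (though the parenthetical formula you display, $\pi^i[\omega^i_{t+1:T}\mid\omega^i_{1:t},\omega_{1:t}]=\pi^i[\omega^i_{t+1:T}\mid\omega^i_{1:t}]$, is the $\X^i\!\to\!\X$ direction, not the $\X\!\to\!\X^i$ one you name).
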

\begin{proof} 
Let us first prove that $B_{c^{1:N}}^{\rm bc} \geq V_{c}^{\rm mc}.$ Let $\Y \in {\rm FP}(\Yc)$ and $\pi^i \in \cplbc(\X^i,\Y)$ be arbitrary. According to \Cref{lemma:glueing}, there exists $\Q \in \Pc (\overline{\Omega} \times\Omega^\Y, \overline{\Fc}_T \otimes \Fc^\Y_T )$ such that $(\mathrm{proj}^i,\mathrm{proj}^{N+1})_\#\Q=\pi^i$, $i \in \{1,\ldots,N\}$, and $\pi\coloneqq{\rm proj}^{ \{1,\ldots,N \}}_\#\Q$ belongs to $\cplmc(\X^1,\ldots,\X^N)$. Then we have
\begin{equation}\label{eqn:barymultilemma} \sum_{i=1}^N \E_{\pi^i} \big[ c^i(X^i,Y) \big] =  \E_{\Q} \bigg[\sum_{i=1}^N c^i(X^i,Y) \bigg]
\geq \E_{\Q} [c(X^1,\ldots, X^N)]=\E_{\pi} [c(X^1,\ldots, X^N)] \geq V_{c}^{\rm mc}.
\end{equation} Because $\X$ and $\pi^i$ were arbitrary, the inequality follows.

\medskip Fix $\varepsilon>0$ and $\pi \in \cplmc (\X^1,\ldots,\X^N)$, and write $\pi^i\coloneqq({\rm proj}^i,{\rm id})_\# \pi.$ Then it is straightforward to verify that $\pi^i \in \cplbc (\X^i,\Y^{\pi,\phi^{\varepsilon,\pi}}),$ where $\Y^{\pi,\phi^{\varepsilon,\pi}}$ is introduced in \Cref{rem:univmeas}. It follows that
\begin{align} \label{eqn:barymultilemma2}
\begin{split}
\sum_{i=1}^N \Ac\Wc_{c^i}(\X^i,\Y^{\pi,\phi^{\varepsilon,\pi}}) &\leq \sum_{i=1}^N \E_{\pi^i} \big[c^i(X^i,Y^{\phi^{\varepsilon,\pi}})\big] =\E_{\pi}\bigg[ \sum_{i=1}^N c^i(X^i,\phi^{\varepsilon,\pi}(X^1,\ldots,X^N)) \bigg]
\\ 
& \leq \E_{\pi}\big[ c(X^1,\ldots,X^N) \big] + T \varepsilon. 
\end{split}
\end{align}
Because $\varepsilon>0$ and $\pi$ were arbitrary, the proof is concluded.
 \end{proof}

Theorem~\ref{thm:con_of_form} below, which is the main theorem of this section, asserts that, under sufficient regularity of $c^i$, $i \in \{1,\ldots,N\}$, there exists a correspondence of solutions of the multicausal problem \eqref{eq.Vmci} and the barycenter problem \eqref{eq.bc_bary}. That is to say, any solution to either problem yields a solution to the other, and, thus, if either of the problem is attained, so is the other. Before stating this correspondence, we need the following.

\begin{definition}
We write $\Phi^0$ for the set of all functions $\phi^0=(\phi^0_t)_{t=1}^T : \Xc^N \longrightarrow \Yc,$ where, for every $t \in \{1,\ldots,T\},$ $\phi^0_t : \Xc_t^N \longrightarrow \Yc_t$ is a universally measurable function satisfying
\[ \inf_{y_t \in \Yc_t} \sum_{i=1}^N c^i_t (x^i_t,y_t)= \sum_{i=1}^N c^i_t (x^i_t,\phi^0_t(x^{1:N}_t)),\quad x^{1:N}_t \in \Xc_t^N. \] 
\end{definition}

\begin{remark} \label{rem:barymultiborel}
Similarly as before, for any $\pi \in \cplmc(\X^1,\ldots,\X^N)$ there exists a Borel measurable map $\phi^{0,\pi}_t$ such that $\phi^{0,\pi}_t=\phi^{0}_t$ holds $\pi$--almost surely and we set \[\X^{\pi,\phi^{0,\pi}}\coloneqq\left(\overline{\Omega},\overline{\Fc}_T,\overline{\F}, \pi, X^{\phi^{0,\pi}} \right),\]
where $X^{\phi^{0,\pi}}$ is given by $X^{\phi^{0,\pi}}\coloneqq(\phi^{0,\pi}_t(X^1_t,\ldots,X^N_t))_{t=1}^T.$ If $\phi^0$ itself is Borel measurable, then $c$ is Borel measurable.
\end{remark}

\begin{theorem} \label{thm:con_of_form} Under {\rm\Cref{ass.baryandmulti}}, the following statements are true.
\begin{enumerate}[label = (\roman*)]
\item \label{it:bary_multi.1} 
    Assume that $\bar{\Y}$ solves $B_{c^{1:N}}^{\rm bc}$ and let $\pi_i^\star \in \cplbc(\X^i,\bar{\Y})$, $i \in \{1,\ldots,N \}$, be optimal transport plans for $\Ac\Wc_{c^i}(\X^i,\bar{\Y})$. Let $\Q^\star$ be the probability measure from {\rm\Cref{lemma:glueing}} associated to $\pi_i^\star$, $i \in \{1,\ldots,N\}$. Then $\pi^\star\coloneqq{\proj}^{\{1,\ldots,N \}}_\# \Q^\star \in \cplmc (\X^1,\ldots,\X^N )$ solves $V_{c}^{\rm mc}.$

\item \label{it:bary_multi.2} Conversely, assume that $\pi^\star \in \cplmc(\X^1,\ldots, \X^N)$ solves $V_{c}^{\rm mc}.$ Then, for any $\phi^0 \in \Phi^0$, the process $\Y^{\pi^\star,\phi^{0,\pi^\star}}$ is a solution to $B_{c^{1:N}}^{\rm bc}$. 
\end{enumerate}
\end{theorem}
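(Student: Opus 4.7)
The plan is to bootstrap off \Cref{lem:barymultilemma}, which already establishes $B_{c^{1:N}}^{\rm bc} = V_c^{\rm mc}$, by re-examining the two chains of (in)equalities in its proof: one direction proceeds via \eqref{eqn:barymultilemma} and the other via \eqref{eqn:barymultilemma2}. In each part of the theorem, the assumed optimality will pin down both extremes of one of these chains to the common value $B_{c^{1:N}}^{\rm bc}=V_c^{\rm mc}$, forcing every intermediate inequality to become an equality and thereby delivering optimality at the other end. So no new estimates are needed beyond those already used for \Cref{lem:barymultilemma}; the proof reduces to tracking where each ``$\leq$'' is saturated.

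For part \ref{it:bary_multi.1}, I would start from the optimal $\bar\X$ and the given bicausal optimizers $\pi_i^\star$ and glue them via \Cref{lemma:glueing} into a probability measure $\Q^\star$ on $\pmb\Omega \times \Omega^{\bar\X}$ whose $(i,N{+}1)$-marginal equals $\pi_i^\star$, ensuring that $\pi^\star \coloneqq \proj^{\{1,\ldots,N\}}_\# \Q^\star$ automatically lies in $\cplmc(\X^1,\ldots,\X^N)$. The chain \eqref{eqn:barymultilemma} then reads
\[
B_{c^{1:N}}^{\rm bc} = \sum_{i=1}^N \Ac\Wc_{c^i}(\X^i,\bar\X) = \E_{\Q^\star}\bigg[\sum_{i=1}^N c^i(X^i,\bar X)\bigg] \geq \E_{\pi^\star}\big[c(X^1,\ldots,X^N)\big] \geq V_c^{\rm mc},
\]
and since the two extremes agree by \Cref{lem:barymultilemma}, every inequality is an equality; in particular $\pi^\star$ attains $V_c^{\rm mc}$.

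For part \ref{it:bary_multi.2}, given the optimal multicausal $\pi^\star$ and some $\phi^0 \in \Phi^0$, I would appeal to \Cref{rem:barymultiborel} to pass to a $\pi^\star$-Borel version $\phi^{0,\pi^\star}$ and build the candidate $\X^{\pi^\star,\phi^{0,\pi^\star}}$. Setting $\pi^i \coloneqq (\proj^i,{\rm id})_\# \pi^\star$, the same bicausality check as in \Cref{lem:barymultilemma} yields $\pi^i \in \cplbc(\X^i,\X^{\pi^\star,\phi^{0,\pi^\star}})$. Rerunning \eqref{eqn:barymultilemma2} with $\phi^{0,\pi^\star}$ in place of $\phi^{\varepsilon,\pi}$ removes the $N\varepsilon$ slack and turns its inner inequality into an equality by the defining property of $\Phi^0$, producing
\[
B_{c^{1:N}}^{\rm bc}\big(\X^{\pi^\star,\phi^{0,\pi^\star}}\big) \leq \sum_{i=1}^N \E_{\pi^i}\big[c^i(X^i,X^{\phi^{0,\pi^\star}})\big] = \E_{\pi^\star}\big[c(X^1,\ldots,X^N)\big] = V_c^{\rm mc} = B_{c^{1:N}}^{\rm bc},
\]
so $\X^{\pi^\star,\phi^{0,\pi^\star}}$ is a minimizer of $B_{c^{1:N}}^{\rm bc}$.

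The main obstacle is nothing conceptually deep but rather some measurability housekeeping: verifying that the glued $\Q^\star$ in (i) really does project into $\cplmc$, and that the Borel selector in (ii) is regular enough to make $\X^{\pi^\star,\phi^{0,\pi^\star}}$ a bona fide element of $\FP$. Both tasks are effectively delegated to \Cref{lemma:glueing} and \Cref{rem:barymultiborel}, so the proof should ultimately reduce to a short chain of invocations of earlier results together with the two saturation arguments above.
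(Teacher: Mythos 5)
Your proof is correct and follows essentially the same approach as the paper: both parts are established by the exact same chains of (in)equalities, reusing the estimates from \Cref{lem:barymultilemma} with \Cref{lemma:glueing} for gluing in part \ref{it:bary_multi.1} and the pushforward couplings $(\proj^i,\mathrm{id})_\#\pi^\star$ together with the $\Phi^0$-selector in part \ref{it:bary_multi.2}, then saturating the inequalities against the common value $B_{c^{1:N}}^{\rm bc}=V_c^{\rm mc}$.
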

\begin{proof} Recall that by \Cref{lem:barymultilemma} we have $V_{c}^{\rm mc}=B_{c^{1:N}}^{\rm bc}.$ Let us start with the proof of $\ref{it:bary_multi.1}$. Let $\pi^\star_i, \pi^\star$ and $\Q^\star$ be as in the statement. Similarly as in \eqref{eqn:barymultilemma}, we have
\begin{align*}V_{c}^{\rm mc}&=B_{c^{1:N}}^{\rm bc} = \sum_{i=1}^N \E_{\pi_i^\star} \big[ c^i(X^i,\bar{Y}) \big] =  \E_{\Q^\star} \bigg[ \sum_{i=1}^N c^i(X^i,\bar{Y}) \bigg]\\ &\geq \E_{\Q^\star} \big[ c(X^1,\ldots, X^N) \big]=\E_{\pi^\star} \big[c(X^1,\ldots, X^N) \big] \geq V_{c}^{\rm mc},
\end{align*} which immediately yields the result.

\medskip Next, we prove $\ref{it:bary_multi.2}$. Let $\pi^\star$ be as in the statement and let $\phi^0 \in \Phi^0$ be arbitrary. As in \eqref{eqn:barymultilemma2}, we find \begin{align*} 
B_{c^{1:N}}^{\rm bc} \leq \sum_{i=1}^N \Ac\Wc_{c^i}(\X^i,\Y^{\phi^{0,\pi^\star}}) &\leq \E_{\pi^\star} \bigg[\sum_{i=1}^N c^i(X^i,\phi^{0,\pi^\star}(X^1,\ldots,X^N)) \bigg] \\
&= \E_{\pi^\star} \big[c(X^1,\ldots,X^N)\big]=V_{c}^{\rm mc}=B_{c^{1:N}}^{\rm bc}.
\end{align*} 
This concludes the proof.
\end{proof}

\begin{remark}\label{rem:filtr_bary} $(1)$ {\rm\Cref{thm:con_of_form}} in particular gives an interesting insight on the structure of bicausal barycenters. More specifically, we see that the filtration is a product of all the filtrations corresponding to the processes involved. That is, in general there is no guarantee that it is generated by the corresponding process even if the filtrations $\F^i$ are generated by the processes $X^i.$ In particular cases, however, this might still be the case, since $\Y^{\pi^\star,\phi^{0,\pi^\star}}$ is just one representative of an equivalence class.

\medskip $(2)$ Note that we obtain attainment for the problem $B_{c^{1:N}}^{\rm bc}$ using {\rm\Cref{thm:con_of_form}} under different conditions than in {\rm\Cref{AdaptBaryExist}}. On the other hand, we obtain attainment of the multicausal optimal transport problem  if the cost function enjoys the specific  structure described in {\rm \Cref{ass.baryandmulti}}. 
\end{remark}

Using an appropriate measurable selection theorem, see \Cref{rem:phi0nonepmty}, we can deduce that the set $\Phi^0$ is not empty under \Cref{assAdaptBary}. In particular, we obtain existence of a solution to the corresponding multimarginal problem. 

\begin{remark} \label{rem:phi0nonepmty}
Under {\rm\Cref{assAdaptBary}} and {\rm\Cref{ass.baryandmulti}}, using the measurable selection theorem {\rm\cite[Corollary 1]{MeasSel}}, we have that there exist Borel measurable functions $\phi^0_t : \Xc_t^N \longrightarrow \Yc_t$, $t \in \{1,\ldots,T\}$, satisfying
\[ \inf_{y_t \in \Yc_t} \sum_{i=1}^N c^i_t (x^i_t,y_t)= \sum_{i=1}^N c^i_t \big(x^i_t,\phi^0_t\big(x^{1:N}_t\big)\big),\quad x^{1:N}_t \in \Xc_t^N. \]
\end{remark}

\begin{corollary} If {\rm Assumptions \ref{assAdaptBary}} and {\rm\ref{ass.baryandmulti}} hold, then $V_{c}^{\rm mc}=B_{c^{1:N}}^{\rm bc},$ and both problems admit a solution. \end{corollary}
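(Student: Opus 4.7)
The plan is to assemble the corollary by chaining together the three main results already established in this section, with Assumption~\ref{assAdaptBary} furnishing existence of a barycenter and Assumption~\ref{ass.baryandmulti} furnishing the bridge to the multicausal problem.

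First, the equality $V_{c}^{\rm mc}=B_{c^{1:N}}^{\rm bc}$ is nothing other than the conclusion of Lemma~\ref{lem:barymultilemma}, which requires only Assumption~\ref{ass.baryandmulti}. So only attainment of each problem remains to be argued. By Theorem~\ref{AdaptBaryExist}, under Assumption~\ref{assAdaptBary}, there exists a process $\bar{\X}\in\FP_{1}$ that realizes the infimum in $B_{c^{1:N}}^{\rm bc}$. This settles attainment of the barycenter problem.

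Next I would produce a minimizer for the multicausal problem by using part~\ref{it:bary_multi.1} of Theorem~\ref{thm:con_of_form}. Its hypothesis asks, for each $i\in\{1,\ldots,N\}$, for an optimal bicausal coupling $\pi_{i}^{\star}\in\cplbc(\X^{i},\bar{\X})$ realizing $\Ac\Wc_{c^{i}}(\X^{i},\bar{\X})$. Such a coupling exists: the cost $c^{i}$ is lower-semicontinuous and lower-bounded by Assumption~\ref{assAdaptBary}\ref{assAdaptItem1}, and bicausal is the same as multicausal for $N=2$, so Theorem~\ref{thm:multicausal}\ref{it:multicausal.existence} applies. Gluing the $\pi_{i}^{\star}$ via Lemma~\ref{lemma:glueing} yields $\Q^{\star}$, and $\pi^{\star}\coloneqq \proj^{\{1,\ldots,N\}}_{\#}\Q^{\star}\in\cplmc(\X^{1},\ldots,\X^{N})$ is then declared optimal for $V_{c}^{\rm mc}$ by Theorem~\ref{thm:con_of_form}\ref{it:bary_multi.1}. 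Together with the value equality, this gives attainment of $V_{c}^{\rm mc}$.

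The only delicate check is that $c$, as defined in \eqref{eqn:bary_multi_c}, is genuinely Borel so that $V_{c}^{\rm mc}$ is well-defined and Theorem~\ref{thm:con_of_form} is applicable. This is precisely the content of Remark~\ref{rem:phi0nonepmty}: Assumptions~\ref{assAdaptBary} and~\ref{ass.baryandmulti} together allow a measurable selection theorem to produce a Borel $\phi^{0}\in\Phi^{0}$, whence $c(x^{1:N})=\sum_{i=1}^{N}c^{i}(x^{i},\phi^{0}(x^{1:N}))$ inherits Borel measurability from the $c^{i}$'s. There is no substantive obstacle beyond this bookkeeping; the work has been done in the preceding results, and the corollary merely records their combined consequence.
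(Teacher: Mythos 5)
Your proof is correct and follows the same route as the paper, which simply cites Lemma~\ref{lem:barymultilemma} for the value equality, Theorem~\ref{AdaptBaryExist} for attainment of the barycenter, and Theorem~\ref{thm:con_of_form} to transfer the barycenter solution to a multicausal optimizer. Your extra checks (existence of optimal bicausal couplings via Theorem~\ref{thm:multicausal}, and the remark on Borel measurability of $c$) are sound elaborations of what the paper leaves implicit, though the latter is already guaranteed directly by Assumption~\ref{ass.baryandmulti}.
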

\begin{proof} The statement is a direct consequence of \Cref{lem:barymultilemma} and \Cref{thm:con_of_form,AdaptBaryExist}.
\end{proof}

In fact, verifying that the assumptions mentioned above are satisfied has to be done on a case-by-case basis. Let us conclude this section by providing an example with quadratic cost, where these conditions can, indeed, be verified.
Note that we do not require any assumption on the distributions of the processes $X^i$, $i \in \{1,\ldots,N\}$, except for finiteness of second moments.

\begin{example}  \label{ex:2-bary} 
Consider the setting of {\rm \Cref{rem:baryeucnorm}} with the particular choice $p=2$. We have already seen that there exists a process $\bar{\Y} \in {\rm FP}_2$ solving
\[ \inf_{\Y \in {\rm FP}}\sum_{i=1}^N \lambda^i\Ac\Wc_2^2(\X^i,\Y). \] 
Moreover, by {\rm\Cref{example:separ.cost}} we know that {\rm\Cref{ass.baryandmulti}} is satisfied. We can readily see that the function 
\[ \phi^0(x^1,\ldots, x^N)\coloneqq\sum_{i=1}^N 2\lambda^i x^i,\quad (x^1,\ldots, x^N) \in (\R^{d\cdot T})^N\] 
is the only element in $\Phi^0$ for the choice $c^i=\lambda^i \lVert \cdot - \cdot \rVert_2^2.$ That is, we have
\[ \inf_{\Y \in {\rm FP}}\sum_{i=1}^N \lambda^i\Ac\Wc_2^2(\X^i,\Y)=\inf_{\pi \in \cplmc(\X^1,\ldots,\X^N)} \E_\pi \left[ \sum_{i=1}^N \lambda^i \lVert X^i-\phi^0(X^1,\ldots,X^N) \rVert^2_2\right].   \]
Moreover, every solution to the right-hand side yields a solution to the left-hand side and \emph{vice versa}  by {\rm \Cref{thm:con_of_form}}.
\end{example}
\subsection{Duality} \label{sec:dual_AW}
Finally, we discuss the dual problem to bicausal barycenters. Let us first mention the following observation, which readily follows from the results of the previous section.

\begin{proposition} Under {\rm\Cref{ass.baryandmulti}}, let $c$ be defined by \eqref{eqn:bary_multi_c} and assume that there exist $\ell^i \in \L^1({\rm Law}_{\P^i}(X^i))$, $i \in \{1,\ldots,N\}$, such that $c\leq \bigoplus_{i=1}^N \ell^i$. Then we have $B_{c^{1:N}}^{\rm bc}=D_{c}^{\rm mc}.$
\end{proposition}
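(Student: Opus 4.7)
The plan is to obtain the identity by chaining two already-established equalities: first a primal identification between the bicausal barycenter value and the multicausal transport value with cost $c$ given by \eqref{eqn:bary_multi_c}, and then the multicausal duality theorem.

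First I would invoke \Cref{lem:barymultilemma}, which, under \Cref{ass.baryandmulti} (assumed in the proposition), asserts that
\[
B_{c^{1:N}}^{\rm bc} \;=\; V_c^{\rm mc},
\]
where $c$ is precisely the cost defined in \eqref{eqn:bary_multi_c}. Note that \Cref{ass.baryandmulti} also guarantees the Borel measurability of $c$, which is needed to apply the duality result downstream.

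Next I would apply \Cref{thm:dual_multicausal} to the measurable cost $c$. The hypothesis of that theorem requires the existence of functions $\ell^i \in \L^1({\rm Law}_{\P^i}(X^i))$ satisfying $c \le \bigoplus_{i=1}^N \ell^i$, which is exactly the integrability assumption made in the statement of the proposition. The theorem then yields $V_c^{\rm mc} = D_c^{\rm mc}$ (with attainment on the dual side, though that is not needed here). Combining the two identities gives
\[
B_{c^{1:N}}^{\rm bc} \;=\; V_c^{\rm mc} \;=\; D_c^{\rm mc},
\]
which is the claim.

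There is no real obstacle: the proposition is a straightforward synthesis of the primal equivalence between barycenter and multicausal transport (\Cref{lem:barymultilemma}) and the multicausal duality (\Cref{thm:dual_multicausal}). The only point worth checking, which is immediate from the hypotheses, is that the integrability bound imposed on $c = \inf_{y} \sum_i c^i(\cdot, y)$ is precisely the condition required to apply \Cref{thm:dual_multicausal}, so no additional work is needed to transfer from the $c^i$'s to $c$.
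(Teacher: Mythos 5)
Your proof is correct and follows essentially the same route as the paper: the paper establishes $B_{c^{1:N}}^{\rm bc}=V_c^{\rm mc}=D_c^{\rm mc}$ by citing \Cref{thm:con_of_form} together with \Cref{thm:dual_multicausal}, whereas you cite \Cref{lem:barymultilemma} for the first equality; since \Cref{thm:con_of_form} itself rests on \Cref{lem:barymultilemma} for the identity of values, your reference is arguably the sharper one, but the argument is the same.
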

\begin{proof} We have by \Cref{thm:con_of_form,thm:dual_multicausal} that
    $B_{c^{1:N}}^{\rm bc}=V_{c}^{\rm mc}=D_{c}^{\rm mc}.$
\end{proof}

 In order to prove the duality result for the bicausal barycenter problem, we briefly define the spaces of `canonical filtered processes' introduced in \citep{BaBePa21}, that is, the canonical space for the so-called `information process'. Let ${\rm FP}(\Yc) \ni \Y=\big(\Omega^\Y,\Fc^\Y,\F^\Y,\P^\Y,Y \big).$ We set inductively backward in time
\begin{align*}
{\rm ip}_T(\Y)&\coloneqq Y_T \in \Zc_T, &\text{where}\quad \Zc_T &\coloneqq \Yc_T, \\
{\rm ip}_{t}(\Y)&\coloneqq \big(Y_t, {\rm Law}_{\P^\Y}({\rm ip}_{t+1}(\Y)\,\big\vert\, \Fc^\Y_t ) \big)\in \Zc_t, &\text{where}\;\quad \Zc_t &\coloneqq \big(\Yc_t,\Pc(\Zc_{t+1})\big).
\end{align*}
We note that, by the construction, for every $t$ the space $\Zc_t$ is Polish and thus the conditional laws above exist. Moreover, we define $\Zc\coloneqq \Zc_{1:T},$ $\F^\Zc=(\Fc^\Zc_t)_{t=1}^T$ as the canonical filtration on $\Zc,$ $\Fc^\Zc\coloneqq \Fc^\Zc_T$, and $\Q\coloneqq {\rm Law}_{\P^\Y}({\rm ip}(\Y))\in \Pc(\Zc).$ Finally, we write ${\rm ip}_t(\Y)=(\hat{Y}_t,P_t) \in \Zc_t=\big(\Yc_t,\Pc(\Zc_{t+1})\big)$ 
for the two coordinates of ${\rm ip}(\Y),$ and we set

\[ \hat \Y\coloneqq \Big(\Zc,\Fc^\Zc,\F^\Zc,\Q, \hat Y \Big).\]
We have that $\hat \Y \sim \Y$ and call $\hat \Y$ the canonical representative of $\Y.$ Conversely, any measure $\nu \in \Pc(\Zc_1)$ defines a filtered process by 
\[ \Y^\nu\coloneqq \big(\Zc,\Fc^\Zc,\F^\Zc,\nu \otimes K_2,Y \big),\]
where for $t \in \{2,\dots,T\}$ we define the kernel $K_t : \Zc_{1:t-1} \longrightarrow \Pc(\Zc_{t:T})$ inductively backward in time by 
\[
    K_T(z_{1:T-1};\mathrm d z_T) \coloneqq p_{T-1}(\mathrm d z_T), \; K_t(z_{1:t-1};\mathrm d z_{t:T}) \coloneqq p_{t-1}(\mathrm dz_t) K_{t+1}(z_{1:t}; \mathrm dz_{t+1:T}),\; z_{1:T}=(y_{1:T},p_{1:T}) \in \Zc.
\] 
We define the set of test functions:
 \begin{align*}   
     \mathfrak{G}^{i,{\rm bc}}&\coloneqq \bigg\{ G^i(\omega^i,z)=\sum_{t=2}^T a^i_t(\omega^i_{1:t},z_{1:t-1})-\int a^i_t(\omega^i_{1:t-1},\tilde{\omega}^i_t,z_{1:t-1}) \P^i_{t,\omega^i_{\smallfont{1}\smallfont{:}\smallfont{t}\smallfont{-}\smallfont{1}}}(\mathrm{d}\tilde{\omega}^i_{t})\,\bigg\vert\,\\
     &\hspace{1.5cm}a^i_t \text{ is Borel measurable and } a^i_t(\omega^i_{1:t-1},\,\cdot\,,z_{1:t-1}) \in  \L^1\big(\Bc(\Omega^i_t),\P^i_{t,\omega^i_{\smallfont{1}\smallfont{:}\smallfont{t}\smallfont{-}\smallfont{1}}}\big),\, t \in\{2,\ldots,T\} \bigg\}, \\
    \mathfrak{H}^{i,{\rm bc}} &\coloneqq
    \bigg\{ 
        H^i(\omega^i,z)=\sum_{t=2}^T
        a_t^i(\omega^i_{1:t-1},z_{1:t}) - \int a_t^i(\omega^i_{1:t-1},z_{1:t-1},\tilde z_t) K_t(z_{1:t-1};\mathrm d\tilde z_t) \,\bigg\vert\, \\ 
        &\hspace{1.5cm}a_t^i \text{ is Borel measurable and } a_t^i(\omega^i_{1:t-1},z_{1:t-1},\,\cdot\,) \in \L^1\big(\Bc(\Zc_t),K_t(z_{1:t-1};\,\cdot\,)\big),\, t \in\{2,\ldots,T\} \bigg\}.
\end{align*}
We similarly define $\mathfrak{G}^{i,{\rm bc}}_{cts} \subseteq \mathfrak{G}^{i,{\rm bc}}$ and $\mathfrak{H}^{i,{\rm bc}}_{cts} \subseteq \mathfrak{H}^{i,{\rm bc}},$ consisting of the functions that additionally satisfy $a_t^i \in \Cc_b(\Omega^i_{1:t}\times \Yc_{1:t-1})$ and $a_t^i \in \Cc_b(\Omega^i_{1:t-1}\times \Yc_{1:t}),$ respectively. Finally, we set
\begin{align*}  \Phi^{{\rm bc}}&\coloneqq \bigg\{ (f^i)_{i=1}^N \in \prod_{i=1}^N \L^1(\Fc_1^i,\P^i)\ \bigg| \  \forall i \in \{1,\ldots,N\},\, \exists g^i: \Zc_1 \longrightarrow \R \text{ Borel measurable, } \\
&\hspace{0.5cm}\exists G^i \in \mathfrak{G}^{i,{\rm bc}},\, H^i \in \mathfrak{G}^{i,{\rm bc}}: f^i(\omega^i_1) + g^i(z_1) \leq c^i(X^i(\omega^i),y) + G^i(\omega^i,y)+H(\omega^i,z),\;\sum_{i=1}^N g^i(z_1)=0 \bigg\}, \\
\Phi^{{\rm bc}}_{cts}&\coloneqq \bigg\{ (f^i)_{i=1}^N \in \prod_{i=1}^N \Cc_b(\Omega_1^i)\ \bigg| \ \forall i \in \{1,\ldots,N\},\, \exists g^i \in \Cc_b(\Zc_1),\,\exists G^i \in \mathfrak{G}^{i,{\rm bc}}_{cts},\, H^i \in \mathfrak{G}^{i,{\rm bc}}_{cts}: \\
&\hspace{2.5cm} f^i(\omega^i_1) + g^i(z_1) \leq c^i(X^i(\omega^i),y) + G^i(\omega^i,y)+H(\omega^i,z),\;\sum_{i=1}^N g^i(z_1)=0 \bigg\}.
\end{align*}

\begin{theorem}[Duality for bicausal barycenters] \label{thm:duality_AW_bary}
    Assume that $\Yc$ is compact and that $c^i$, $i \in \{1,\ldots,N\},$ are lower-semicontinuous and lower-bounded.
    Then we have
    \begin{align*}
       B_{c^{1:N}}^{\rm bc} 
        = \sup_{f^{1:N} \in \Phi^{{\rm bc}}_{cts}} \sum_{i = 1}^N \int f^i(\omega^i_1) \P^i(\mathrm{d}\omega^i_1) \eqqcolon D_{c^{1:N},cts}^{\rm bc}.
    \end{align*}
\end{theorem}
\begin{proof} By \Cref{ass:continuity}, we have that the successive disintegrations of $\P^i$ are continuous. Note that refining the topology on $\Omega^i,$ $ i \in\{1,\ldots,N\}$ clearly does not change lower-semicontinuity of the functions $c^i,$ see \Cref{lem:topology}.
As $\Yc$ is compact, it is readily seen from the backward construction that $\Zc$ is also compact. We have
\begin{align*}
        &\inf_{\Y \in {\rm FP}(\Yc)} \sum_{i = 1}^N \Ac\Wc_{c^i}(\X^i,\Y) =
        \inf_{\nu \in \Pc(\Zc_1)} \sum_{i = 1}^N \Ac\Wc_{c^i}(\X^i,\Y^\nu) \\
        &=\inf_{\nu \in \Pc(\Zc_1)} \sum_{i = 1}^N \sup \bigg\{ \int f^i_1 \mathrm{d}\P^i+\int g_1^i \mathrm{d}\nu \,\bigg\vert\,\\
        &\hspace{2.5cm} f^i \in \Cc_b(\Omega^i),g^i \in \Cc_b(\Zc_1), G^i \in \mathfrak{G}^{i,{\rm bc}}_{cts},\, H^i \in \mathfrak{G}^{i,{\rm bc}}_{cts},\,i \in \{1,\dots,N\} : f^i+g^i \leq c^i+G^i+H^i \bigg\} \\
        &=\inf_{\nu \in \Pc(\Zc_1)} \sup \bigg\{ \sum_{i = 1}^N \int f^i_1 \mathrm{d}\P^i +\int \sum_{i=1}^N g_1^i \mathrm{d}\nu \,\bigg\vert\,\\
        &\hspace{2.5cm} f^i \in \Cc_b(\Omega^i),g^i \in \Cc_b(\Zc_1), G^i \in \mathfrak{G}^{i,{\rm bc}}_{cts},\, H^i \in \mathfrak{G}^{i,{\rm bc}}_{cts} ,\,i \in \{1,\dots,N\} : f^i+g^i \leq c^i+G^i+H^i \bigg\} \\
        &= \sup \bigg\{ \sum_{i = 1}^N \int f^i_1 \mathrm{d}\P^i + \inf_{\nu \in \Pc(\Zc_1)} \int \sum_{i=1}^N g_1^i \mathrm{d}\nu  \,\bigg\vert\,\\
        &\hspace{2.5cm} f^i \in \Cc_b(\Omega^i),g^i \in \Cc_b(\Zc_1), G^i \in \mathfrak{G}^{i,{\rm bc}}_{cts},\, H^i \in \mathfrak{G}^{i,{\rm bc}}_{cts} ,\,i \in \{1,\dots,N\} : f^i+g^i \leq c^i+G^i+H^i \bigg\} \\
        &= \sup \bigg\{ \sum_{i = 1}^N \int f^i_1 \mathrm{d}\P^i + \min_{z \in \Zc} \sum_{i=1}^N g_1^i(z)  \,\bigg\vert\,\\
        &\hspace{2.5cm} f^i \in \Cc_b(\Omega^i),g^i \in \Cc_b(\Zc_1), G^i \in \mathfrak{G}^{i,{\rm bc}}_{cts},\, H^i \in \mathfrak{G}^{i,{\rm bc}}_{cts},\,i \in \{1,\dots,N\}  : f^i+g^i \leq c^i+G^i+H^i \bigg\} \\
        &= \sup \bigg\{ \sum_{i = 1}^N \int f^i_1 \mathrm{d}\P^i \,\bigg\vert\, f^i \in \Cc_b(\Omega^i),g^i \in \Cc_b(\Zc_1), G^i \in \mathfrak{G}^{i,{\rm bc}}_{cts},\, H^i \in \mathfrak{G}^{i,{\rm bc}}_{cts},\,i \in \{1,\dots,N\}  :\\
        &\hspace{10.7cm} f^i+g^i \leq c^i+G^i+H^i \text{ and } \sum_{i=1}^N g^i \geq 0 \bigg\} \\
        &=\sup \bigg\{ \sum_{i = 1}^N \int f^i_1 \mathrm{d}\P^i \,\bigg\vert\, f^i \in \Cc_b(\Omega^i),g^i \in \Cc_b(\Zc_1), G^i \in \mathfrak{G}^{i,{\rm bc}}_{cts},\, H^i \in \mathfrak{G}^{i,{\rm bc}}_{cts},\,i \in \{1,\dots,N\}  :\\
        &\hspace{10.7cm} f^i+g^i \leq c^i+G^i+H^i \text{ and } \sum_{i=1}^N g^i = 0 \bigg\} \\
        &=\sup_{f^{1:N} \in \Phi^{{\rm bc}}_{cts}} \sum_{i = 1}^N \int f^i(\omega^i_1) \P^i(\mathrm{d}\omega^i_1),
\end{align*}
where the second equality follows from duality for bicausal optimal transport with lower-semicontinuous cost function. The fourth equality follows from the minimax theorem. Indeed, it is easy to verify that the map
\begin{multline*} \Cc_b(\Zc_1) \times \Pc(\Zc_1) \ni ( \varphi,\nu ) \longmapsto  \int \varphi \mathrm{d}\nu + \sup \bigg\{ \sum_{i = 1}^N \int f^i \mathrm{d}\P^i \,\bigg\vert\, f^i \in \Cc_b(\Omega^i),g^i \in \Cc_b(\Zc_1),\\
G^i \in \mathfrak{G}^{i,{\rm bc}}_{cts},\, H^i \in \mathfrak{G}^{i,{\rm bc}}_{cts} : f^i+g^i \leq c^i+G^i+H^i \text{ and } \sum_{i=1}^N g^i = \varphi \bigg\}
\end{multline*}is convex and continuous in $\nu$ and concave in $\varphi$. This together with the compactness of $\Pc(\Zc_1)$ guarantees that we can employ the minimax theorem \cite[Theorem 2]{minimax}. Furthermore, in the fifth equality, we have replaced $(f^1,g^1)$ with $(f^1+\min_{z \in \Zc} \sum_{i=1}^N g^i(z_1),g^1-\min_{z \in \Zc} \sum_{i=1}^N g^i(z))$, and the remaining equalities are straightforward.
\end{proof}

We conclude this section with a result on duality and dual attainment for general measurable functions, which was proved in \cite{KrPa23}. We remark that \cite{KrPa23} provides a more general result involving non-compact space $\Yc.$ We present only the compact case here for the sake of simplicity.

\begin{theorem}[General duality for bicausal barycenters] 
    Assume that $\Yc$ is compact and, for  $i \in \{1,\ldots,N\}$, $c^i$ are measurable and $\ell^i \in \L^1({\rm Law}_{\P^i}(X^i))$ are such that $|c^i(x,y)| \leq  \ell^i(x),$ $(x,y) \in \Xc \times \Yc$.
    Then, we have
    \begin{align*}
        B_{c^{1:N}}^{\rm bc} 
        = \sup_{f^{1:N} \in \Phi^{{\rm bc}} } \sum_{i = 1}^N \int f^i(\omega^i_1) \P^i(\mathrm{d}\omega^i_1).
    \end{align*} Moreover, the right-hand side is attained.
\end{theorem}

\section{Causal barycenters} \label{sec:CWbary}
In this section, we study the barycenter problem with causal constraints. That is to say, we consider the minimization problem
\begin{equation*} \inf_{\Y\in {\rm FP}(\Yc)} B_{c^{1:N}}^{\rm c}(\Y),\end{equation*}
where $c^i : \Xc \times \Yc \longrightarrow \R$, $i \in \{1,\ldots, N\}$, are given measurable functions and 
\[
B_{c^{1:N}}^{\rm c}(\Y)\coloneqq \sum_{i=1}^N \Cc\Wc_{c^i}(\X^i,\Y), \quad \Y \in {\rm FP}(\Yc).
\]

The problem introduced above has, generally speaking,  substantially different properties than the bicausal barycenter problem. First, even if the cost function is symmetric, the causal optimal transport is inherently asymmetric due to the causality constraint. As a consequence, we lack the multimarginal optimal transport formulation, and other properties --such as the dynamic programming principle-- also differ.

\medskip
As in the case of bicausal barycenters, here as well we can think of the filtered processes $\X^1,\dots,\X^N$ as models for beliefs of $N$ different experts.
Now, however, agents aim at finding a consensus process but not a consensus filtration. This can be thought of as agents sharing their beliefs and eventually having to agree on a stochastic process but not the underlying flow of information.
This will result in a model endowed with canonical filtration (canonical filtered process), as the minimal requirement of the causal barycenter process is being adapted.

\begin{remark}[Anticausal barycenters]
Let us note that the barycenter problem which is causal in the other direction is not of much interest since it coincides with the barycenter problem in standard optimal transport. Indeed, consider the anticausal barycenter problem, that is, 
\begin{equation}\label{eq:acb}
\inf_{\Y\in {\rm FP}(\Yc)} \sum_{i=1}^N \Cc\Wc_{c^i}(\Y,\X^i)\end{equation}
and let {\rm\Cref{assAdaptBary}} hold. 
From {\rm\Cref{AdaptBaryExist}} (applied to $\tilde T=1$ and $\tilde \Xc_1\coloneqq \Xc_{1:T}$), we conclude that there exists a solution, say $\mu^\star \in \Pc(\Yc)$, to the standard optimal transport barycenter problem \begin{equation} \label{eqn:wass_bary}
\inf_{\mu \in \Pc(\Yc)} \sum_{i=1}^N \Wc_{c^i}(\mu,\P^i),\; {\rm where}\; \Wc_{c^i}(\mu,\P^i)\coloneqq \inf_{\pi^i \in \cpl(\mu,\P^i)} \int c^i(y,X^i(\omega^i)) \pi^i(\mathrm{d}y, \mathrm{d}\omega^i), \end{equation} 
as well as optimal transport plans, say $\pi^{\star,i} \in \cpl(\mu^\star,\P^i)$, $i \in \{1,\ldots,N \}$. Consider the disintegration $\pi^{\star,i}(\mathrm{d}y, \mathrm{d}\omega^i)=K^i(y,\mathrm{d}\omega^i)\mu^\star(\mathrm{d}y)$ and define the filtered process
\begin{equation*} 
\bar{\X}\coloneqq \left( \overline{\Omega} \times \Yc, \overline{\Fc}_T \otimes \Fc^Y_T, \overline{\F} \otimes \F^Y,\gamma^\star,\bar{Y}\right), \end{equation*} 
where $(\Yc,\Fc^Y_T,\F^Y)$ is as in {\rm \Cref{rem:plain}} and
\[ \bar{Y}(\omega^1,\ldots,\omega^N,y)\coloneqq y,\;\gamma^\star(\mathrm{d}\overline{\omega},\mathrm{d}y)\coloneqq \bigg(\prod_{i=1}^N K^i(y;\mathrm{d}\omega^i)\bigg)\mu^\star(\mathrm{d}y).\]

 It is obvious that the projection ${\rm proj}^i : \overline{\Omega} \times \Yc  \longrightarrow \Omega^i$ is $\overline{\F} \otimes \F^Y$-adapted and hence $((\overline \omega,y) \mapsto (y, \omega^i))_{\#} \gamma^\star \in \cplc(\bar{\Y},\X^i).$ It follows that
\begin{equation*}  
\sum_{i=1}^N \Cc\Wc_{c^i}(\bar{\Y},\X^i) \leq \sum_{i=1}^N \int c^i(X^i \circ {\rm proj}^i,\bar{Y} )  \mathrm{d} \gamma^\star =\inf_{\mu \in \Pc(\Yc)} \sum_{i=1}^N  \Wc_{c^i}(\mu,\P^i)\leq \inf_{\Y\in {\rm FP}(\Yc)} \sum_{i=1}^N \Cc\Wc_{c^i}(\Y,\X^i).
\end{equation*}
Consequently, we obtain that $\bar{\Y}$ is an anticausal barycenter. Thus solving the anticausal barycenter problem \eqref{eq:acb} boils down to solving the standard optimal transport barycenter problem \eqref{eqn:wass_bary}.
\end{remark} 

\begin{example}[A causal analogy to the multicausal results in \Cref{thm:con_of_form} does \emph{not} hold] Let $T=2$ and let us consider two $\R$-valued processes $X^1=(Y,Y^{3})$ and $X^2=(0,Y^{3})$, together with their canonical filtrations. Let $Y \sim \Nc(0,1)$ and set $c^1(x,y)=c^2(x,y)=\frac{1}{2}\lVert x-y \rVert^2_2.$ We know from {\rm\Cref{ex:2-bary}} that the function
\[ \phi^0(x^1,x^2)\coloneqq x^1+x^2,\quad (x^1,\ldots, x^N) \in (\R^{T})^N\] is a unique element in $\Phi^0.$ It can be readily seen that the only element $\pi \in \cpl(\X^1,\X^2)$ such that  $({\rm proj}^1,{\rm id})_{\#} \pi \in \cplc(\X^1,\Y^{\pi,\phi^0})$ and $({\rm proj}^2,{\rm id})_{\#} \pi \in \cplc(\X^2,\Y^{\pi,\phi^0})$ is the product coupling. That is, 
\[ X^1=(Y,Y^3),\,\,X^2=(0,{Y'}^3),\,\,Y^{\pi,\phi^0}=\left(Y,Y^3+{Y'}^3 \right), \] where $Y,Y' \stackrel{{\rm i.i.d.}}{\sim} \Nc(0,1).$ The transport cost is then \[ \sum_{i=1}^2 \frac{1}{2} \E_{\pi} \lVert X^i-Y^{\pi,\phi^0} \rVert^2=\frac{1}{2}\left( \E Y^2+ 2\E Y^{6} \right)=15.5.\]
If we consider the process $\tilde{Y}=(0,Y^3)$ with its canonical filtration, the causal transport cost is
\[ \sum_{i=1}^2 \frac{1}{2} \Cc\Wc_2^2(X^i,\tilde{Y})=\E Y^2=1.\]
It is thus clear that there is no barycenter of the form $\Y^{\pi,\phi^0}$ for some $\pi \in \cpl(\X^1,\X^2).$
\end{example}

\subsection{Existence of primal optimizers}
Let us start with the following observation. Due to the asymmetry of the causality constraint, it is easy to observe that, unlike in the bicausal case, the solution can always be chosen with the `smallest' possible filtration. This is formalized in the following remark.

\begin{remark} \label{rem:plain}
Let $\Y=\big( \Omega^\Y, \Fc^\Y, \F^\Y, \P^\Y, (Y_t)_{t=1}^T  \big)$ be a filtered process and  define \[\Y^\text{plain}\coloneqq \Big( \Omega^\Y, \Fc^Y, \F^Y, \P^\Y, (Y_t)_{t=1}^T  \Big),\] where $\Fc^Y_t\coloneqq \sigma\{ Y_s \,\vert\, s \in \{1,\ldots,t\} \}.$ It can be easily seen that $\cplc(\X^i,\Y)\subseteq \cplc(\X^i,\Y^\text{plain})$, $i \in \{1,\ldots, N\}$, and therefore we immediately obtain $B_{c^{1:N}}^{\rm c}(\Y)\geq B_{c^{1:N}}^{\rm c}(\Y^\text{plain}).$ In particular, if there is a solution to $B_{c^{1:N}}^{\rm c}$, it can be chosen with the canonical filtration. This is not the case for bicausal barycenters, see {\rm\Cref{rem:filtr_bary}.}

\medskip In particular, we have the following. For $\mu \in \Pc(\Yc)$, we set
\[ \Y^\mu\coloneqq \big( \Yc, \Fc^Y_T, \F^Y,\mu,Y \big),\] where
$Y$ is the canonical process on $\Yc$, \emph{i.e.} $Y_t(y)=y_t$ for $(t,y) \in \{1,\ldots,T\} \times \Yc$, and $\F^Y=(\Fc^Y_t)_{t=1}^T$ is the canonical filtration. We then have
\begin{equation} \label{eqn:plain} \inf_{\Y\in {\rm FP}(\Yc)} B_{c^{1:N}}^{\rm c}(\Y)=\inf_{\mu \in \Pc(\Yc)} B_{c^{1:N}}^{\rm c}(\Y^\mu).\end{equation}
\end{remark}

The attainment can be derived analogously as in the bicausal case. 

\begin{theorem} \label{existCausBary} Let {\rm\Cref{assAdaptBary}} hold with $B_{c^{1:N}}^{\rm bc}(\Y^0)<\infty$ replaced by $B_{c^{1:N}}^{\rm c}(\Y^0)<\infty$. Then there exists a solution to $B_{c^{1:N}}^{\rm c}.$ Moreover, any solution lies in ${\rm FP}_1(\Yc).$
\end{theorem}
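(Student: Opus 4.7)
The plan is to follow verbatim the structure of the proof of \Cref{AdaptBaryExist}: combine (i) lower-semicontinuity of the objective in the adapted weak topology with (ii) relative compactness of a sublevel set, and conclude existence by a standard direct-method argument. The additional remark that any minimizer lies in ${\rm FP}_1$ will be a byproduct of the uniform bound used for relative compactness. As in the adapted case, we may assume without loss of generality (using \Cref{assAdaptBary}-$\ref{assAdaptItem1}$) that each $c^i$ is non-negative; note that the hypothesis $B_{c^{1:N}}^{\rm c}(\X^0)<\infty$ together with the relaxed lower bound on $c^i$ gives $-\infty<B_{c^{1:N}}^{\rm c}<\infty$.

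For step (i), I would argue that, for each fixed $\X^i$, the map $\X\longmapsto \Cc\Wc_{c^i}(\X^i,\X)$ is lower-semicontinuous with respect to the adapted weak topology, from which lower-semicontinuity of $B_{c^{1:N}}^{\rm c}(\X)=\sum_{i=1}^N \Cc\Wc_{c^i}(\X^i,\X)$ follows by summation. This mirrors the role of \Cref{thm:multicausal}-$\ref{it:multicausal.valuelsc}$ in the bicausal case; the proof runs along the same compactness-and-selection lines, using the compactness of the set of causal couplings $\cplc(\X^1,\X^2)$ for fixed marginals together with the standard refinement of the topology on kernels (analogous to \Cref{lem:topology}) and the measurable selection theorem to verify a dynamic programming representation. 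I expect this step to be the main technical obstacle: the causal coupling constraint is weaker than the bicausal one and therefore is not immediately treated by \Cref{lem:compactness}, so a small adaptation---essentially restricting the compactness argument in \Cref{lem:compactness} to one-sided causality---has to be carried out separately.

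For step (ii), the argument mimics \eqref{eqn:baryproof} but with $\Cc\Wc$ instead of $\Ac\Wc$. For any $\X\in{\rm FP}$ and any $\pi\in\cplc(\X^1,\X)$, the triangle inequality for $d$ gives
\[
\E_{\P^\X}[d(X,x^0)]=\E_\pi[d(X,x^0)]\leq \E_\pi[d(X^1,X)]+\E_{\P^1}[d(X^1,x^0)].
\]
Minimizing over $\pi\in\cplc(\X^1,\X)$ and then using \Cref{assAdaptBary}-$\ref{assAdaptItem2}$ yields
\[
\E_{\P^\X}[d(X,x^0)]\leq \Cc\Wc_d(\X^1,\X)+\E_{\P^1}[d(X^1,x^0)]\leq C\bigl(1+\Cc\Wc_{c^1}(\X^1,\X)\bigr)+\E_{\P^1}[d(X^1,x^0)].
\]
Restricting to the sublevel set $\kappa\coloneqq\{\X\in{\rm FP}:B_{c^{1:N}}^{\rm c}(\X)\leq B_{c^{1:N}}^{\rm c}+1\}$ bounds the right-hand side uniformly in terms of $B_{c^{1:N}}^{\rm c}$ and $\E_{\P^1}[d(X^1,x^0)]$. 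By \Cref{assAdaptBary}-$\ref{assAdaptItem3}$, this makes $\{{\rm Law}_{\P^\X}(X):\X\in\kappa\}$ tight, and \cite[Theorem 5.1]{BaBePa21} then yields that $\kappa$ is relatively compact in the adapted weak topology.

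Combining steps (i) and (ii), an infimizing sequence $(\X^n)_n\subseteq\kappa$ has a subsequence converging in the adapted weak topology to some $\bar\X\in{\rm FP}$, and lower-semicontinuity of $B_{c^{1:N}}^{\rm c}$ gives $B_{c^{1:N}}^{\rm c}(\bar\X)\leq \liminf_n B_{c^{1:N}}^{\rm c}(\X^n)=B_{c^{1:N}}^{\rm c}$, so $\bar\X$ is optimal. Finally, the inequality displayed above applied with $\X=\bar\X$ shows that $\E_{\P^{\bar\X}}[d(\bar X,x^0)]<\infty$, hence $\bar\X\in{\rm FP}_1$; the same bound applied to an arbitrary optimizer $\X^\star$ gives $\X^\star\in{\rm FP}_1$ in general. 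As noted, the genuinely new input compared to \Cref{AdaptBaryExist} is the lower-semicontinuity of $\Cc\Wc_{c^i}(\X^i,\,\cdot\,)$; once this is in place, the rest of the argument is essentially a transcription of the bicausal proof with $\Ac\Wc$ replaced by $\Cc\Wc$.
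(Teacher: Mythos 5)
Your proposal is correct and follows the paper's intended route: the paper's own proof of Theorem~\ref{existCausBary} simply refers to the proof of Theorem~\ref{AdaptBaryExist}, and your transcription (lower-semicontinuity of the objective in the adapted weak topology plus relative compactness of the sublevel set $\kappa$ via the bound on $\E_{\P^\X}[d(X,x^0)]$, then the direct method, then re-using the bound to conclude $\FP_1$-membership for any optimizer) is exactly that argument with $\AW$ replaced by $\Cc\Wc$. Two small imprecisions worth flagging. First, you claim that $\cplc(\X^1,\X^2)$ is ``not immediately treated by Lemma~\ref{lem:compactness}'' and needs a separate adaptation, but that lemma explicitly lists $\cplc(\X^1,\X^2)$ among the compact sets, so no new compactness result is required. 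Second, for the lower-semicontinuity of $\X\mapsto\Cc\Wc_{c^i}(\X^i,\X)$ you suggest going via a dynamic programming representation for the causal problem; the cleaner route that mirrors Theorem~\ref{thm:multicausal}-\ref{it:multicausal.valuelsc} is a direct gluing argument: given $\X^n\to\X$ in the adapted weak topology, glue near-optimal $\pi^n\in\cplc(\X^i,\X^n)$ with near-optimal bicausal couplings $\gamma^n\in\cplbc(\X^n,\X)$ (gluing a causal with a bicausal coupling over the common marginal remains causal), project to obtain $\tilde\pi^n\in\cplc(\X^i,\X)$, use compactness of $\cplc(\X^i,\X)$ (fixed marginals) to extract a weak limit, and invoke lower-semicontinuity and lower-boundedness of $c^i$ together with the fact that the limit is concentrated on the diagonal $\{X^n=X\}$. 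This avoids a causal DPP, which is structurally more delicate than the multicausal one. Apart from these points the argument is sound; you might also want to observe explicitly that $\E_{\P^1}[d(X^1,x^0)]<\infty$ follows from $\Cc\Wc_{c^1}(\X^1,\X^0)<\infty$ and Assumption~\ref{assAdaptBary}-\ref{assAdaptItem2}, which you implicitly use but do not state.
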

\begin{proof} The statement can be proved by analogous arguments as in the proof of \Cref{AdaptBaryExist}. Due to \eqref{eqn:plain}, we just need to verify lower semicontinuity of the map $\Pc(\Yc) \ni \mu \longmapsto \Cc\Wc_{c^{i}}(\X^i,\Y^\mu).$ This can be done analogously as in \eqref{eqn:lsc}. We have to verify that if $\Kc \subseteq \Pc(\Yc)$ is compact, then the set $\bigcup_{\mu \in \Kc} \cplc(\X^i,\Y^\nu)$ is compact for every fixed $i \in \{1,\ldots,N\}.$ As in the proof of \Cref{lem:comp_multic} compactness of $\bigcup_{\mu \in \Kc} \cpl(\X^i,\Y^\mu)$ is immediate, and so we have to show that any cluster point is a causal coupling. This follows from the fact that a coupling $\pi \in \bigcup_{\mu \in \Pc(\Yc)}\cplc(\X^i,\Y^\mu)$ is causal if and only if 
\[ \int \bigg[ \sum_{t=2}^T a^i_t(\omega^i_{1:t},y_{1:t-1})-\int a^i_t(\omega^i_{1:t-1},\tilde{\omega}^i_t,y_{1:t-1}) \P^i_{t,\omega^i_{\smallfont{1}\smallfont{:}\smallfont{t}\smallfont{-}\smallfont{1}}}(\mathrm{d}\tilde{\omega}^i_{t}) \bigg] \pi(\mathrm{d}\omega^i,\mathrm{d}y)=0 \]
for every $a_i^t : \Omega^i \times \Yc \longrightarrow \R$ continuous and bounded, $t \in \{2,\ldots,T\}.$ As the function 
\[ (\omega^i,y) \longmapsto \sum_{t=2}^T a^i_t(\omega^i_{1:t},y_{1:t-1})-\int a^i_t(\omega^i_{1:t-1},\tilde{\omega}^i_t,y_{1:t-1}) \P^i_{t,\omega^i_{\smallfont{1}\smallfont{:}\smallfont{t}\smallfont{-}\smallfont{1}}}(\mathrm{d}\tilde{\omega}^i_{t})\] is continuous and bounded, the claim follows.
\end{proof}

\subsection{Duality} \label{sec:CWduality}
For every $i\in\{1,\ldots,N\}$, define the set of test functions
 \begin{align*}   
     \mathfrak{G}^{i,{\rm c}}&\coloneqq \bigg\{ G^i(\omega^i,y)=\sum_{t=2}^T a^i_t(\omega^i_{1:t},y_{1:t-1})-\int a^i_t(\omega^i_{1:t-1},\tilde{\omega}^i_t,y_{1:t-1}) \P^i_{t,\omega^i_{\smallfont{1}\smallfont{:}\smallfont{t}\smallfont{-}\smallfont{1}}}(\mathrm{d}\tilde{\omega}^i_{t})\,\bigg\vert\,\\
     &\hspace{1.5cm} a^i_t \text{ is Borel measurable and } a^i_t(\omega^i_{1:t-1},\,\cdot\,,y_{1:t-1}) \in  \L^1\big(\Bc(\Omega^i_t),\P^i_{t,\omega^i_{\smallfont{1}\smallfont{:}\smallfont{t}\smallfont{-}\smallfont{1}}}\big),\, t \in\{2,\ldots,T\} \bigg\}.
\end{align*}
We similarly define $\mathfrak{G}^{i,{\rm c}}_{cts} \subseteq \mathfrak{G}^{i,{\rm c}},$ consisting of the functions that additionally satisfy $a_t^i \in \Cc_b(\Omega^i_{1:t}\times \Yc_{1:t-1})$.

We further set
\begin{align*}  \Phi^{{\rm c}}&\coloneqq \bigg\{ (f^i)_{i=1}^N \in \prod_{i=1}^N \L^1(\Fc_1^i,\P^i)\ \bigg| \  \forall i \in \{1,\ldots,N\},\, \exists g^i: \Yc \longrightarrow \R \text{ Borel measurable, } \\
&\hspace{2.5cm}\exists G^i \in \mathfrak{G}^{i,{\rm c}}: f^i(\omega^i_1) + g^i(y) \leq c^i(X^i(\omega^i),y) + G^i(\omega^i,y),\;\sum_{i=1}^N g^i(y)=0 \bigg\}
\end{align*} 
and 
\[D_{c^{1:N}}^{\rm{c}}\coloneqq \sup_{(f^i)_{i=1}^N \in \Phi^{\rm c}_{cts}} \sum_{i=1}^N \int f^i(\omega^i) \P^i(\mathrm{d}\omega^i).  \]

\begin{remark} \label{rem:weak_dual_bary} For any $\mu \in \Pc(\Xc)$, $\pi \in \cplc(\X^i,\X^\mu)$ and $(f^i)_{i=1}^N \in \Phi^{{\rm c}},$ we have
\begin{align*}
   \sum_{i=1}^N \int f^i(\omega^i)\P^i(\mathrm{d}\omega^i) &= \sum_{i=1}^N \bigg[ \int f^i(\omega^i)\P^i(\mathrm{d}\omega^i) + \int g^i(y) \mu(\mathrm{d}y) \bigg] \\ 
   &\leq \sum_{i=1}^N \int \Big[c^i(X^i(\omega^i),y) + G^i(\omega^i,y)\Big]\pi^i(\mathrm{d}\omega^i,\mathrm{d}y)=\sum_{i=1}^N \int c^i(X^i(\omega^i),y) \pi^i(\mathrm{d}\omega^i,\mathrm{d}y),
\end{align*} where $g^i$ and $G^i$ come from the definition of $\Phi^{{\rm c}}.$ This yields the weak duality $D_{c^{1:N}}^{{\rm c}} \leq B_{c^{1:N}}^{\rm{c}}.$
\end{remark}

\begin{remark}
If the space $\Yc$ is assumed to be compact, duality can be shown as in {\rm\Cref{thm:duality_AW_bary}}, with the obvious changes having been made. For illustration, we show a different proof, which follows similar steps as the proof of {\rm\cite[Proposition 2.2]{AgCa11}},  and does not require compactness of $\Yc$, but only the more general {\rm\Cref{assAdaptBary}}.
\end{remark}

\begin{theorem} \label{thm:Caus_duality} Let {\rm\Cref{assAdaptBary}} hold with $B_{c^{1:N}}^{\rm bc}(\Y^0)<\infty$ replaced by $B_{c^{1:N}}^{\rm c}(\Y^0)<\infty$. Then we have
\begin{equation} \label{eqn:dual_caus_bary}
B_{c^{1:N}}^{\rm{c}}= D_{c^{1:N}}^{\rm{c}}. 
\end{equation}
\end{theorem}
\begin{proof} We recall that, by \Cref{ass:continuity}, the maps $\omega^i_{1:t} \longmapsto \P^i_{t+1,\omega^\smallfont{i}_{\smallfont{1}\smallfont{:}\smallfont{t}}}$ are assumed to be continuous. It follows from \Cref{assAdaptBary}.$\ref{assAdaptItem3}$ that $\Yc$ is a locally compact space. We define
\[\Cc_{1}(\Yc)\coloneqq\bigg\{ g \in \Cc(\Yc) \,\bigg\vert\, \frac{g(\,\cdot\,)}{1+d(y^0,\,\cdot\,)} \in \Cc_0(\Yc) \bigg\},\qquad \lVert g \rVert_{\Cc_1}\coloneqq \sup_{y \in \Yc} \frac{\lvert g(y) \rvert }{1+d(y^0,y)},\] 
where $\Cc(\Yc)$ denotes the space of all continuous functions on $\Yc$, and $\Cc_0(\Yc)$ the space of all continuous functions on $\Yc$ that vanish at infinity, and where $y^0$ is from \Cref{assAdaptBary}. Using the Riesz--Markov--Kakutani representation, we identify the dual of $\Cc_{1}(\Yc)$ with
\[\Mc_1(\Yc)\coloneqq\{ \mu \in \Mc_f(\Yc) : (1+d(y^0,y))\mu(\mathrm{d}y) \in \Mc_f(\Yc) \},\]
where $\Mc_f(\Yc)$ denotes the set of all regular finite Borel signed measures on $\Yc.$ For any $g^i \in \Cc_{1}(\Yc)$, we define the functions $(g^i)^{c^i,G^i} : \Omega^i \longrightarrow \R \cup \{ \pm \infty\}$ by \begin{align*}
(g^i)^{c^i,G^i}(\omega^i)&\coloneqq\inf_{y \in \Yc} \big\{c^i(X^i(\omega^i),y)-g^i(y) +G^i(\omega^i,y) \big\},\quad G^i \in \mathfrak{G}^{i,{\rm c}}_{cts}.  \end{align*}

We consider the problem
\[\tilde D=\sup \bigg\{ \sum_{i=1}^N \int (g^i)^{c^i,G^i} \mathrm{d}\P^i \,\bigg\vert\, G^i \in \mathfrak{G}^{i,{\rm c}}_{cts},\; g^i \in \Cc_{1}(\Yc),\; i \in \{1,\ldots,N\},\; \sum_{i=1}^N g^i=0 \bigg\}. \]
It is easy to see that 
\[\tilde D \leq D_{c^{1:N}}^{\rm{c}} \leq B_{c^{1:N}}^{\rm{c}}.\]
This is a consequence of weak duality \eqref{rem:weak_dual_bary} and the following argument:
Clearly, 
\[(g^i)^{c^i,G^i}(\omega^i) \leq c^i(X^i(\omega^i),y)-g^i(y)+G^i(\omega^i,y)\] with $G^i \in \mathfrak{G}^{i,{\rm c}}_{cts} \subseteq \mathfrak{G}^{i,{\rm c}}$. Moreover, since
\[ (g^i)^{c^i,G^i}(\omega^i) \leq c^i(X^i(\omega^i),y^0)-g^i(y^0)+G^i(\omega^i,y^0), \]
and the right-hand side is integrable with respect to $\P^i$ by \Cref{assAdaptBary}.$\ref{assAdaptItem3}$, we have that $\int (g^i)^{c^i,G^i} \mathrm{d}\P^i$ is well-defined, and either $(g^i)^{c^i,G^i} \in \L^1(\P^i)$ or $\int (g^i)^{c^i,G^i} \mathrm{d}\P^i=-\infty$. Actually, as in \cite[Proposition 4.21]{KrPa23} and using \Cref{assAdaptBary}.$\ref{assAdaptItem3}$, one can even show that $(g^i)^{c^i,G^i} \in \L^1(\P^i)$.

\medskip To further shorten the notation, we set 
\begin{align*}
 S^i(g^i)&\coloneqq-\sup_{G^i \in \mathfrak{G}^{i,{\rm c}}_{cts}} \bigg\{ \int (g^i)^{c^i,G^i}(\omega^i) \P^i(\mathrm{d}\omega^i) \bigg\},\quad g^i \in \Cc_{1}(\Yc),\; i \in \{1,\ldots,N\},   \\
 S(g)&\coloneqq\inf \bigg\{ \sum_{i=1}^N S^i(g^i) \,\bigg\vert\, g^i \in \Cc_{1}(\Yc),\; \sum_{i=1}^N g^i=g \bigg\},\quad g \in \Cc_{1}(\Yc).
\end{align*}
It is then easy to see that $\tilde D=-S(0).$ We next show that $S^i : \Cc_{1}(\Yc) \longrightarrow [-\infty, \infty)$, $i \in \{1,\ldots,N\}$, and $S : \Cc_{1}(\Yc) \longrightarrow [-\infty,\infty]$ are convex. We start by fixing $i \in \{1,\ldots,N\}$ and showing convexity of $S^i$. Let $\alpha \in (0,1)$, $g^i, h^i \in \Cc_{1}(\Yc)$ and $G^i,H^i \in \mathfrak{G}^{i,{\rm c}}_{cts}.$ We have
\[ c^i(X^i,\,\cdot\,)-[\alpha g^i + (1-\alpha)h^i] +[\alpha G^i + (1-\alpha)H^i]=\alpha[c^i(X^i,\,\cdot\,)- g^i+G^i] + (1-\alpha)[c^i(X^i,\,\cdot\,)- h^i+H^i]. \]
Taking infima yields
\[ (\alpha g^i + (1-\alpha)h^i)^{c^i,\alpha G^i + (1-\alpha)H^i} \geq \alpha (g^i)^{c^i,G^i} + (1-\alpha) (h^i)^{c^i,H^i}. \]
Since $\alpha G^i + (1-\alpha) H^i \in \mathfrak{G}^{i,{\rm c}}_{cts}$ and $G^i$ and $H^i$ were arbitrary, this in turn gives 
\[\sup_{G^i \in \mathfrak{G}^{i,{\rm c}}_{cts}} \bigg\{ \int (\alpha g^i+ (1-\alpha h^i))^{c^i,G^i} \mathrm{d}\P^i \bigg\} \geq \alpha \sup_{G^i \in \mathfrak{G}^{i,{\rm c}}_{cts}} \bigg\{ \int (g^i)^{c^i,G^i} \mathrm{d}\P^i \bigg\} + (1-\alpha)\sup_{G^i \in \mathfrak{G}^{i,{\rm c}}_{cts}} \bigg\{ \int (h^i)^{c^i,G^i} \mathrm{d}\P^i \bigg\},  \] 
showing convexity of $S^i.$

\medskip To prove convexity of $S$, let  $\alpha \in (0,1)$, $g,h \in \Cc_{1}(\Xc)$ and $g^i,h^i \in \Cc_{1}(\Yc), i \in \{1,\ldots,N\}$, be such that $\sum_{i=1}^N g^i=g$ and $\sum_{i=1}^N h^i=h.$ Clearly,
\[ \sum_{i=1}^N S^i(\alpha g^i+(1-\alpha)h^i) \leq \alpha \sum_{i=1}^N S^i(g^i)+ (1-\alpha)\sum_{i=1}^N S^i(h^i) \] by convexity of $S^i.$ Because $\sum_{i=1}^N (\alpha g^i + (1-\alpha)h^i) = (\alpha g + (1-\alpha)h)$ and $g^i$ and $h^i$, $i \in \{1,\ldots,N\}$, were arbitrary, we obtain
\[ S(\alpha g + (1-\alpha)h)\leq \alpha S( g )+(1-\alpha)S(h).\]
This shows convexity of $S.$ The main part of the proof follows.

\medskip For simplicity, assume that the functions $c^i$, $i \in \{1,\ldots,N\}$, are non-negative, and denote by $S^{i,\star}$ the Legendre-Fenchel transform of $S^i.$ We have, for $\mu \in \Mc_1(\Yc)$,
\begin{align*}
S^{i,\star}(\mu)&= \sup_{g^i \in \Cc_{1}(\Yc)} \bigg\{ \int g^i(y) \mu^i(\mathrm{d}y) - S^i(g^i) \bigg\} =\sup_{\substack{g^i \in \Cc_{1}(\Yc) \\ G^i \in \mathfrak{G}^{i,{\rm c}}_{cts}}}  \bigg\{ \int g^i(y) \mu^i(\mathrm{d}y) + \int (g^i)^{c^i,G^i}(\omega^i) \P^i(\mathrm{d}\omega^i)  \bigg\}.
\end{align*}
For $\mu \in \Pc(\Xc) \cap \Mc_1(\Xc)$, we thus have
\[ S^{i,\star}(\mu)=\Cc\Wc_{c^i}(\X^i,\X^\mu), \] 
by the duality result for causal optimal transport, see \cite[Section 5]{EcPa22}. If $\mu$ is not non-negative or $\mu(\Xc) \neq 1$, one can easily verify, as in \cite[Lemma 2.1]{AgCa11}, that $S^{i,\star}(\mu)\geq \sup_{g^i \in \Cc_{1}(\Yc)}  \Big\{ \int g^i \mathrm{d}\mu + \int (g^i)^{c^i,0} \mathrm{d}\P^i\Big\}=\infty.$

\medskip
As already noticed, we have $\tilde D= -S(0).$ From what shown above and from \Cref{existCausBary}, we further deduce
\begin{multline*}
    B_{c^{1:N}}^{{\rm c}}=\inf_{\mu \in \Pc(\Yc) \cap \Mc_1(\Yc)} \sum_{i=1}^N \Cc\Wc_{c^i}(\X^i,\Y^\mu)=\inf_{\mu \in \Mc_1(\Yc)} \sum_{i=1}^N S^{i,\star}(\mu) \\
    =-\sup_{\mu \in \Mc_1(\Yc)} \Big\{0 -\sum_{i=1}^N S^{i,\star}(\mu) \Big\}=-\Big( \sum_{i=1}^N S^{i,\star} \Big)^\star (0).
\end{multline*} 
Straightforward calculations show that the Legendre-Fenchel transform $S^\star$ of $S$ satisfies $S^\star=\sum_{i=1}^N S^{i,\star}$, and thus it suffices to show that $S^{\star \star}(0)=S(0).$ We have that $S$ is convex. Therefore, according to \cite[Proposition 3.3]{Ekeland_convex}, it is sufficient to show that $S$ is lower-semicontinuous and admits a continuous affine minorant at $0.$ For any $g \in \Cc_{1}(\Yc)$ and $g^i \in \Cc_{1}(\Yc)$, $i \in \{1,\ldots,N\}$, such that $g=\sum_{i=1}^N g^i$, we have that
\begin{align*} 
    S^i(g^i)&= \inf_{G^i \in \mathfrak{G}^{i,{\rm c}}_{cts}} \bigg\{ \int \sup_{y \in \Yc} \{ -c^i(X^i(\omega^i),y)+g^i(y)-G^i(\omega^i,y) \} \P^i(\mathrm{d} \omega^i) \bigg\} \\
    &\geq \inf_{G^i \in \mathfrak{G}^{i,{\rm c}}_{cts}} \bigg\{ \int -c^i(X^i(\omega^i),y_0)+g^i(y^0)-G^i(\omega^i,y_0) \P^i(\mathrm{d} \omega^i) \bigg\}\\
    &=\inf_{G^i \in \mathfrak{G}^{i,{\rm c}}_{cts}} \bigg\{ g^i(y^0) - \int c^i(X^i(\omega^i),y^0) \P^i(\mathrm{d} \omega^i) \bigg\}\\
    &=g^i(y^0) - \int c^i(X^i(\omega^i),y^0) \P^i(\mathrm{d} \omega^i). 
\end{align*}
Thus,
\[ S(g)\geq g(x_0) - \sum_{i=1}^N \int c^i(X^i(\omega^i),y^0)  \P^i(\mathrm{d} \omega^i),  \] which gives a continuous affine minorant. Further, let $K>0$ be such that $K\leq 1/C$, where $C$ is given in \Cref{assAdaptBary}. For every $(x,y) \in \Xc \times \Yc$, using \Cref{assAdaptBary}.$\ref{assAdaptItem2}$, we obtain
\begin{align*} -c^1(x,y)+K(1+d_{\Yc}(y,y^0)) &\leq 1+\ell^1(x)- \frac{1}{C}d_{\Yc}(y,y^0)+K(1+ d(y^0,y)) \\
&\leq 1+K+\ell^1(x)+\Big(K-\frac{1}{C} \Big) d_{\Yc }(y,y^0) \leq 1+K+\ell^1(x).
\end{align*} 
Thus, if $g \in \Cc_{1}(\Yc)$ is a function satisfying $\lVert g \rVert_{\Cc_{1}} \leq K$, as $c^i$ is non-negative we have
\begin{align*} S(g) &\leq S^1(g)+ \sum_{i=2}^N S^i (0) \\
&= \inf_{G^1 \in \mathfrak{G}^{1,{\rm c}}_{cts}} \int \sup_{y \in \Yc} \{- c^1(X^1(\omega^1),y)+g(y) -G^1(\omega^1,y)\} \P^1(\mathrm{d} \omega^1) \\
&\hspace{1cm}+ \sum_{i=2}^N \inf_{G^i \in \mathfrak{G}^{i,{\rm c}}_{cts}} \int \sup_{y \in \Xc} \{- c^i(X^i(\omega^i),y) -G^i(\omega^i,y)\} \P^i(\mathrm{d} \omega^i) \\
&\leq \int \sup_{y \in \Yc} \{-c^1(X^1(\omega^1),y)+g(y)\} \P^1(\mathrm{d} \omega^1) + \sum_{i=2}^N \int \sup_{y \in \Yc} \{-c^i(X^i(\omega^i),y)\} \P^i(\mathrm{d} \omega^i) \\
&\leq \int \sup_{y \in \Yc} \{-c^1(X^1(\omega^1),y)+K(1+d(y^0,y))\} \P^1(\mathrm{d} \omega^1)\\
& \leq \int \big[1+K+\ell^1(X^1(\omega^1))\big] \P^1(\mathrm{d} \omega^1)<\infty.
\end{align*}
We conclude that $S$ is a convex function bounded from above on a neighborhood of $0$ which does not attain the value $-\infty.$ Thus, by \cite[Proposition 2.5]{Ekeland_convex} it is continuous at $0.$ This concludes the proof. 
\end{proof}

 As the causal barycenter problem is inherently asymmetric, and we work in a non-dominated setting, in order to achieve dual attainment, we need to relax the admissibility in the dual problem. More specifically, we consider a formulation in which, for every $\mu$, there exist dual potentials $G^i, g^i$, which may depend on $\mu$. This relaxation is unnecessary for the bicausal problem and the classical OT problem due to their more rigid structure. We refer to the work \cite{KrPa23} for more details. Moreover, we note that such relaxation is not required for dual attainment if one assumes the continuum hypothesis, which allows us to `aggregate' certain functions, see \cite[Theorem 4.28]{KrPa23}.

\medskip We set
\begin{align*}  \Phi^{{\rm c}}_{rel}&\coloneqq \bigg\{ (f^i)_{i=1}^N \in \prod_{i=1}^N \L^1(\Fc_1^i,\P^i)\ \bigg| \ \forall \mu \in \Pc(\Yc),\,  \forall i \in \{1,\ldots,N\},\, \exists g^i: \Yc \longrightarrow \R \text{ Borel measurable, } \\
&\hspace{1.5cm}\exists G^i \in \mathfrak{G}^{i,{\rm c}}: f^i(\omega^i_1) + g^i(y) \leq c^i(X^i(\omega^i),y) + G^i(\omega^i,y)\text{ and }\sum_{i=1}^N g^i(y)=0\; \cplc(\X^i,\Y^\mu)\text{--q.s.} \bigg\}.
\end{align*}
Let us emphasize that, compared to the set $\Phi^{{\rm c}},$ the functions $g^i$ and $G^i$ are allowed to depend on the chosen measure $\mu \in \Pc(\Yc),$ and the inequality holds only $\cplc(\X^i,\Y^\mu)$--quasi-surely.

\begin{proposition} \label{thm:existbary_dual} Let $\Yc$ be a $\sigma$-compact space. For every $ i \in \{1,\ldots,N\},$ let $c^i$ be measurable and lower-bounded, $k : \Yc \longrightarrow \R$ be bounded on compacts, and $\ell^i \in \L^1({\rm Law}_{\P^i}(X^i))$ be such that $c^i(x, y) \leq \ell^i(x)) + k(y), (\omega^i, y) \in \Omega^i \times \Yc.$ Then, we have
\[B_{c^{1:N}}^{\rm{c}}=\sup_{(f^i)_{i=1}^N \in \Phi^{\rm c}_{rel}} \sum_{i=1}^N \int f^i(\omega^i) \P^i(\mathrm{d}\omega^i),\]
and the right-hand side is attained.
\end{proposition}
\begin{proof}
    This was shown in \citeauthor*{KrPa23} \cite[Theorem 4.23]{KrPa23}.  
\end{proof}

\section{Dynamic matching models} \label{sec:matching_models}
In this section we provide an application of the theory developed above, for equilibrium multipopulation matching problems, also called matching for teams, in a dynamic setting. Motivated by the work of \citeauthor*{Carlier_matching} \cite{Carlier_matching}, we introduce matching models in a dynamic framework.

\subsection{Problem formulation} Given $N \in \N$ groups of agents or workers of the same size, that we normalize to $1$. Each agent in the $i$-th group at time $t$, for $i \in \{1,\ldots,N\}$ and $t \in \{1,\ldots,T\}$, has a certain type, say $x_t^i \in \Xc^i_t$, where $\Xc_t^i$ is a given Polish space. Analogously as before we denote $\Xc^i\coloneqq \Xc^i_{1:T}$ and assume that we are given $\P^i \in \Pc(\Xc^i)$, $i \in \{1,\ldots,N\}$, representing the distributions of the types of agents in each group, which are thus assumed to be known and fixed.

\medskip Moreover, we assume that there is a group of principals with given distribution of types $\P^0 \in \Pc(\Xc^0)$ where $\Xc^0=\Xc^0_{1:T}$ is, as before, a given Polish path space.

\medskip For $i \in \{0,\ldots,N\}$, we denote by $X^i$ the canonical process on the path space $\Xc^i$ and by $\F^i\coloneqq(\Fc^{i}_t)_{t=1}^T$ the corresponding canonical $\sigma$-algebra. In accordance with the previous sections we set $\Fc^i_0=\{ \Xc^i,\emptyset \}$ and denote \begin{equation} \label{eqn:canon_proc} \X^i=\big( \Xc^i, \Fc^i_T, \F^i,\P^i,X^i \big).\end{equation}

We shall denote generic elements of $\Xc^i$ by $x^i$ and those of $\Xc^i_{1:t}$ by $x^i_{1:t}$ \emph{etc}.

\begin{remark}Let us point out that, more generally, one can, as before, consider general filtered probability spaces and, thus, general filtered processes. All results in this section remain valid, with the necessary changes having been made. Indeed, such a formulation would be of interest if the evolution of the type of each agent depends on some external randomness. For the ease of exposition, we consider the canonical processes here.
\end{remark}

We consider the situation in which every principal hires a team consisting of exactly one agent from each group to work on a task. That is, each one of the $N$ workers hired by the same principal has to work on the same task and we assume that every agent from each group enters a contract with exactly one principal. The interpretation is that the principal needs exactly one agent from each group to complete a certain task.

\medskip At each time $t \in \{1,\ldots,T\}$, we are given a set of potential tasks $\Yc_t$, which is again a Polish space, and we denote $\Yc\coloneqq\Yc_{1:T}.$ We write $d_{\Yc}(y,\tilde{y})\coloneqq \sum_{t=1}^T d_{\Yc_t}(y_t,\tilde{y}_t)$ for the metric on $\Yc.$ Further, we are given measurable functions $c^i : \Xc^i \times \Yc \longrightarrow \R,\; i\in\{0,\ldots,N\}$, where, for $i\geq 1$, $c^i(x^i,y)$ represents the cost of an agent of type $x^i$ working on task $y$. In the case $i=0$, instead, we interpret $c^0=-u$, where $u(x^0,y)$ represents the utility of a principal of type $x^0$ if her agents work on task $y$.

\medskip The aim is to find equilibrium wages (contracts) $w^i : \Yc \longrightarrow \R$, $i \in \{0,\ldots,N\}$, where, for $i\geq 1$, $w^i(y)$ is to be understood as the wage an agent in $i$-th group receives for working on task $y \in \Yc.$ The type of each agent is unobserved to the principal, and she can contract only upon the task carried out by the agent, thus facing an adverse selection situation. In the case $i=0$, we instead set \[ w^0(y) \coloneqq- \sum_{i=1}^N w^i(y),\quad y \in \Yc,\]
which is the (negative) amount that the principal pays to her agents for working on task $y$. That is, we have the relation $\sum_{i=0}^N w^i=0.$ From now on, we shall make no explicit distinction between groups of agents and principals and simply call everyone `agent' for simplicity.

\medskip Fix $i \in \{0,\ldots,N\}.$ Given a contract $w^i$ and knowing his type up to time $t$, say $x^i_{1:t} \in \Xc^i_{1:t}$, every agent chooses a task $y_t \in \Yc_t$ in an optimal way as to maximize his utility. As is standard when one seeks an equilibrium, we allow the agents to adopt mixed strategies. That is to say, agents of the same type are allowed to choose different tasks.
In mathematical terms, at every time $t \in \{1,\ldots,T\}$ agents choose a measurable kernel $K^i_{t}: \Xc^i_{1:t} \times \Yc_{1:t-1} \longrightarrow \Pc(\Yc_t).$ 
Having chosen such kernels, the expected cost of an agent in $i$-th group is then
\begin{equation} 
 \int \Big[ c^i(x^i,y_{1:T-1},\tilde{y}_T)-w^i(y_{1:T-1},\tilde{y}_T) \Big] K^i_{1}(x_1^i;\mathrm{d}y_1)K^i_{2}(x_{1:2}^i,y_1;\mathrm{d}y_2)\cdots K^i_{T}(x_{1:T}^i,y_{1:T-1};\mathrm{d}y_T) \P^i(\mathrm{d}x^i).
\end{equation}
Let  \[ \Y^\nu\coloneqq\big( \Yc, \Fc^Y_T, \F^Y,\nu,Y \big) \] be, analogously as in \eqref{eqn:canon_proc}, the canonical process on the path space $\Yc$ with distribution $\nu.$ Let us set \[\cpl(\X^i,\ast)\coloneqq\bigcup_{\nu \in \Pc(\Yc)}\cpl(\X^i,\Y^\nu),\quad{\rm and}\quad\cplc(\X^i,\ast)\coloneqq\bigcup_{\nu \in \Pc(\Yc)}\cplc(\X^i,\Y^\nu).\] It is easy to verify that a coupling $\pi^i \in \cpl(\X^i,\ast)$ belongs to 
$\cplc(\X^i,\ast)$ if and only if it admits a disintegration
\[ \pi^i(\mathrm{d}x^i,\mathrm{d}y)=K^i_{1}(x^i_1;\mathrm{d}y_1)K^i_{2}(x^i_{1:2},y_1;\mathrm{d}y_2)\cdots K^i_{T}(x^i_{1:T},y_{1:T};\mathrm{d}y_T) \P^i(\mathrm{d}x^i) \] for such kernels introduced above. We thus equivalently minimize the cost over all couplings \[\pi^i \in  \cplc(\X^i,\ast)\] and the expected cost in $i$-th group for such a coupling is then
\begin{equation} 
 \int \Big[ c^i(x^i,y_{1:T})-w^i(y_{1:T}) \Big] \pi^i(\mathrm{d}x^i,\mathrm{d}y).
\end{equation} We therefore define the optimal value associated to a given wage $w^i$ by
\begin{align*} V^i(w^i)&\coloneqq \inf \bigg\{ \int \Big[ c^i(x^i,y_{1:T})-w^i(y_{1:T}) \Big] \pi^i(\mathrm{d}x^i,\mathrm{d}y) \,\bigg|\, \pi^i \in \cplc(\X^i,\ast) \bigg\}.
\end{align*}

\medskip Furthermore, as the principal hires a team of agents, in order to complete a task, the whole team has to work together on the said task. Consequently, at an equilibrium we impose that the distributions of the tasks chosen by the agents between groups coincide. That is to say, there exists a distribution of tasks $\nu \in \Pc(\Yc)$ such that
${\rm proj}^2_{\#}\pi^i = \nu$ for all $i \in \{0,\ldots,N\}$. We can also interpret this condition as imposing that the distributions of the demand and the supply for agents match.

\medskip We are thus looking for an equilibrium consisting of measurable functions $w^i : \Yc \longrightarrow \R$, $i \in \{0,\ldots,N\}$, a distribution $\nu \in \Pc(\Yc)$ and couplings $\pi^i \in \cplc(\X^i,\Y^\nu)$, $i \in \{0,\ldots,N\}$, satisfying the following definition, where,  for brevity, we set
\[ \Theta=\Big\{ (w^{0:N},\nu, \pi^{0:N}) \,\Big\vert\, w^i : \Yc \longrightarrow \R\; {\rm is}\;{\rm measurable},\; \nu \in \Pc(\Yc),\;  \pi^i \in \cplc(\X^i,\Y^\nu),\; i \in \{0,\ldots,N\}  \Big\}.\] 

\begin{definition} \label{def:equi} We say that $(w^{0:N},\nu, \pi^{0:N}) \in \Theta$ forms an equilibrium, if the following conditions hold:
\begin{enumerate}[label = (\roman*)] 
\item \label{def:equi_clearing} Clearing:  $\sum_{i=0}^N w^i(y)=0,\; y \in \Yc;$ 
\item \label{def:equi_opti} Optimality:  for every $i \in \{0,\ldots,N \}$ we have \begin{equation*} \int \big[ c^i(x^i,y_{1:T-1},\tilde{y}_T)-w^i(y_{1:T-1},\tilde{y}_T) \big] \pi^i(\mathrm{d}x^i,\mathrm{d}y)=V_0^i(w^i).
\end{equation*}
\end{enumerate}
\end{definition}

\subsection{Existence of equilibria}
In this section, we link equilibria to solutions to a causal barycenter problem and the corresponding dual. This in turn, using \Cref{existCausBary} and \Cref{thm:existbary_dual}, gives existence of an equilibrium under appropriate assumptions. 

\medskip First, we establish the following optimality verification theorem.

\begin{lemma} \label{lem:opti} Let $f^i \in \L^1(\P^i)$ and $w^i: \Yc \longrightarrow \R$ be measurable such that for every $\nu \in \Pc(\Yc)$ there is $G^i \in \mathfrak{G}^{i,{\rm c}}$ such that it holds $f^i(x^i)\leq c^i(x^i,y)-w^i(y)+G^i(x^i,y)$ $\cplc(\X^i,\Y^\nu)$--q.s. Assume that there exist $\nu^\star \in \Pc(\Yc)$, $\pi^{i,\star} \in  \cplc(\X^i,\Y^{\nu^\star})$ and $G^{i,\star} \in \mathfrak{G}^{i,{\rm c}}$ such that
$f^i(x^i)\leq c^i(x^i,y)-w^i(y)+G^{i,\star}(x^i,y)$ holds $\cplc(\X^i,\Y^{\nu^\star})$--q.s.\ and with equality $\pi^{i,\star}$--a.s. Then $\pi^{i,\star}$ solves $V^i(w^i).$
\end{lemma}

\begin{proof}
First, we note that $V^i(w^i)$ is, by definition, a specific type of causal barycenter with only one marginal $\P^i$ and cost function $\tilde{c}\coloneqq c^i-w^i$. By the weak duality result, which can be derived analogously as in \Cref{rem:weak_dual_bary}, this yields
\begin{multline*}
    \sup \bigg\{ \int f^i \mathrm{d}\P^i \,\bigg|\,f^i \in \L^1(\P^i),\; \forall \nu \in \Pc(\Yc),\;\exists G^i \in \mathfrak{G}^{i,{\rm c}}: \\ f^i(x^i)\leq c^i(x^i,y)-w^i(y)+G^i(x^i,y)\enspace\cplc(\X^i,\Y^\nu){\rm \text{--q.s.}}  \bigg\} \leq V^i(w^i).
\end{multline*}
Thus, if $\nu^\star,$ $G^{i,\star} \in \mathfrak{G}^{i,{\rm c}}$ and $\pi^{i,\star} \in \cplc(\X^i,\Y^{\nu^\star})$ are such that
$ f^i(x^i)\leq c^i(x^i,y)-w^i(y)+G^{i,\star}(x^i,y)$ $\cplc(\X^i,\Y^{\nu^\star})$--quasi-surely and with equality $\pi^{i,\star}$--almost surely, then it is clear that the inequality above is actually an equality and that $ f^i$, resp.\ $\pi^{i,\star}$, solves the respective problem. 
\end{proof}  

\begin{remark} Let us comment on the result presented in {\rm \Cref{lem:opti}}. Similarly as in {\rm \cite[Corollary 3.17]{KrPa23}}, we can connect the martingale $f^i-G^i$ to the value process corresponding to the optimal control problem of agents in the $i$-th group. Vaguely speaking, {\rm \Cref{lem:opti}} can be interpreted as a verification theorem for the corresponding optimal control problem.
\end{remark}

We have the following existence result, where we show existence of an equilibrium using solutions to the barycenter problem as well as the corresponding dual.

\begin{theorem} \label{thm:equi} Assume that {\rm\Cref{assAdaptBary}} holds with $B_{c^{1:N}}^{\rm bc}(\Y^0)<\infty$ replaced by $B_{c^{1:N}}^{\rm c}(\Y^0)<\infty.$ For every $ i \in \{1,\ldots,N\},$ let $c^i$ be measurable and lower-bounded, $k : \Yc \longrightarrow \R$ be bounded on compacts, and $\ell^i \in \L^1({\rm Law}_{\P^i}(X^i))$ be such that $c^i(x, y) \leq \ell^1(x) + k(y), (\omega^i, y) \in \Omega^i \times \Yc.$ Then there exists an equilibrium $(w^{0:N,\star},\nu^\star, \pi^{0:N,\star}) \in \Theta.$
\end{theorem}

\begin{proof} The assumptions imply that the problem 
\begin{equation}\label{egn:bary_equi} \inf_{\nu \in \Pc(\Yc)}\sum_{i=1}^N \Cc\Wc_{c^i}(\X^i,\Y^{\nu})\end{equation} is attained and admits the dual representation, see \eqref{eqn:dual_caus_bary}, which is attained as well. In particular, there exist $f^{i,\star} \in \L^1(\P^i),$ $i \in \{0,\ldots,N\},$ such that for every $\nu \in \Pc(\Yc),$ there exist $G^{i,\nu} \in \mathfrak{G}^{i,{\rm c}}$, measurable $w^{i,\nu} : \Yc \longrightarrow \R$, such that
$ f^{i,\star}(x^i)\leq c^i(x^i,y)-w^{i,\nu}(y)+G^{i,\nu}(x^i,y)$, $\sum_{i=1}^N w^{i,\nu}(y)=0$ $\cplc(\X^i,\Y^\nu)$--quasi-surely, and $(f^{i,\star})_{i=1}^N$ attains the dual problem. Let $\nu^\star$ be a solution to \eqref{egn:bary_equi} and $\pi^{i,\star}\in \cplc(\X^i,\Y^{\nu^\star})$ be the corresponding optimal transport plans. It is then easy to see that $(w^{0:N,\nu^\star},\nu^\star, \pi^{0:N,\star}) \in \Theta$ satisfies \Cref{def:equi}.$\ref{def:equi_clearing}$. It thus suffices to verify that \Cref{def:equi}.$\ref{def:equi_opti}$ is met. To that end, it is straightforward to verify that, by optimality of $\nu^\star$ and $\pi^{i,\star}$, for every $i \in \{0,\ldots,N\}$, we have
\[ f^{i,\star}(x^i)= c^i(x^i,y)-w^{i,\nu^\star}(y)+G^{i,\nu^\star}(x^i,y)\quad \pi^{i,\star}{\rm \text{--almost surely.}}\]
Invoking \Cref{lem:opti}, we obtain that \Cref{def:equi}.$\ref{def:equi_opti}$ is satisfied. This concludes the proof.
\end{proof} 

\appendix
\section{Appendix}

The appendix collects technical lemmata related to multicausal couplings and (bi-)causal transport.

\begin{lemma} \label{lem:compactness}
    The sets $\cplbc(\X^1,\X^2)$, $\cplc(\X^1,\X^2),$ $\cplmc(\X^1,\ldots,\X^N)$ are compact in the weak topology.
\end{lemma}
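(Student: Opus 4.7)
The plan is to combine Prokhorov's theorem with closedness arguments for each of the three distinguished subsets.

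First I would establish relative weak compactness. Each marginal $\P^i$ is a Borel probability on the Polish space $\Omega^i$ and is therefore tight; consequently the family $\cpl(\X^1,\ldots,\X^N)$, whose members have prescribed marginals $\P^1,\ldots,\P^N$, is tight on $\pmb\Omega$. By Prokhorov's theorem this family is relatively weakly compact, and the subsets $\cplc$, $\cplbc$ and $\cplmc$ inherit relative compactness.

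Closedness of $\cpl(\X^1,\ldots,\X^N)$ is immediate: if $\pi_n \to \pi$ weakly with $\proj^i_\#\pi_n = \P^i$, then testing against $\phi\circ\proj^i$ for $\phi \in \Cc_b(\Omega^i)$ and passing to the limit gives $\proj^i_\#\pi = \P^i$, so $\pi \in \cpl(\X^1,\ldots,\X^N)$.

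The core of the argument is closedness of $\cplmc(\X^1,\ldots,\X^N)$; the case $N=2$ of bicausality is then the same statement (multicausality coincides with bicausality there), and causality $\cplc(\X^1,\X^2)$ is recovered by imposing only the $i=1$ half of the multicausal conditional independence and rerunning the same reasoning on the corresponding sub-family of test functions. I would use the test-function characterization of Lemma~\ref{testfunc}: a coupling $\pi \in \cpl(\X^1,\ldots,\X^N)$ belongs to $\cplmc(\X^1,\ldots,\X^N)$ if and only if $\int F\, d\pi \leq 0$ for every $F \in \mathfrak{F}$. The principal obstacle is that a generic $F \in \mathfrak{F}$ is not continuous, since each summand involves integration against the a priori only measurable kernels $\omega^i_{1:t-1} \mapsto \P^i_{t,\omega^i_{1:t-1}}$, so weak convergence $\pi_n \to \pi$ does not directly yield $\int F\, d\pi_n \to \int F\, d\pi$. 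To circumvent this I would invoke Lemma~\ref{lem:topology} to refine the Polish topology on each $\Omega^i$ so that all transition kernels become continuous maps into $\Pc(\Omega^i_t)$, while preserving the Borel $\sigma$-algebra and thus the identification of probability measures and couplings. In the refined topology, restricting the $h^i_t$ in the definition of $\mathfrak{F}$ to functions that are bounded and continuous renders the corresponding $F$ bounded and continuous, and tightness of the marginals is preserved by construction, so $\cpl(\X^1,\ldots,\X^N)$ remains weakly compact in the refined topology.

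To conclude, given a sequence $\pi_n \to \pi$ that converges in the original weak topology with each $\pi_n \in \cplmc(\X^1,\ldots,\X^N)$, refined-topology compactness allows extraction of a further subsequence converging in the refined sense; this refined limit must coincide with $\pi$ by Hausdorffness of the original weak topology, so $\int F\, d\pi = \lim \int F\, d\pi_n \leq 0$ for every admissible $F$ arising from continuous bounded $h^i_t$. A standard monotone class argument then extends the inequality to the full class $\mathfrak{F}$, and a further appeal to Lemma~\ref{testfunc} yields $\pi \in \cplmc(\X^1,\ldots,\X^N)$. The entire proof hinges on the kernel-continuity device of Lemma~\ref{lem:topology}, which is where the essential work is hidden.
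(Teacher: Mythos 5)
Your argument is correct and follows the standard strategy behind the paper's cited reference \cite[Lemma A.3]{EcPa22}: tightness of the fixed marginals gives relative weak compactness via Prokhorov, and closedness of the (multi)causality constraint is obtained by passing to the refined Polish topology of Lemma~\ref{lem:topology}, which turns the kernel-based test functions of Lemma~\ref{testfunc} into bounded continuous functions so that the constraint survives weak limits. The paper itself only cites that lemma and declares the multicausal case analogous, so you have essentially reconstructed the underlying proof rather than taken a different route.
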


\begin{proof}
    The first two cases were treated in \citeauthor*{EcPa22} \cite[Lemma A.3]{EcPa22}. The multicausal case can be shown analogously.
\end{proof}

\begin{lemma} \label{lem:mcrestriction}
    Let $\pi \in \cplmc(\X^1,\ldots, \X^N)$ and $I \subseteq \{1,\ldots,N\}$.
    Then $\proj^I_\# \pi \in \cplmc( (\X^i)_{i \in I})$.
\end{lemma}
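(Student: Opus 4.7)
The plan is to verify directly that $\tilde\pi\coloneqq\proj^I_\#\pi$ satisfies the definition of a multicausal coupling of the family $(\X^i)_{i\in I}$. The marginal property is immediate: for any $i\in I$, $\proj^i_\#\tilde\pi=\proj^i_\#(\proj^I_\#\pi)=\proj^i_\#\pi=\P^i$, so $\tilde\pi\in\cpl((\X^i)_{i\in I})$. The core of the argument is to transfer the conditional-independence clauses of multicausality from $\pi$ on $\pmb\Omega$ to $\tilde\pi$ on $\prod_{i\in I}\Omega^i$.

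First, I would rephrase multicausality of $\pi$ in the convenient equivalent form: for every $i\in\{1,\ldots,N\}$ and every $t\in\{1,\ldots,T\}$, and every bounded $\Fc^i_T$-measurable function $f$, one has $\E_\pi[f\mid\pmb{\Fc}_t]=\E_\pi[f\mid\pmb{\Fc}_{te_i}]$ $\pi$-a.s. Now fix $i\in I$ and $t\in\{1,\ldots,T\}$ and a bounded function $f:\Omega^i\longrightarrow\R$ that is $\Fc^i_T$-measurable. Denote by $\tilde{\pmb\Fc}_t\coloneqq\bigotimes_{j\in I}\Fc^j_t$ and $\tilde{\pmb\Fc}_{te_i}\coloneqq\Fc^i_t\otimes\bigotimes_{j\in I,\,j\neq i}\Fc^j_0$ the relevant $\sigma$-algebras on $\prod_{j\in I}\Omega^j$, and let $\phi$ be the $\Fc^i_t$-measurable version of $\E_\pi[f\mid\pmb{\Fc}_{te_i}]$, which depends only on $\omega^i_{1:t}$.

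Next, I would test the claim against an arbitrary bounded $\tilde{\pmb\Fc}_t$-measurable function $h$ on $\prod_{j\in I}\Omega^j$. Pulling back via $\proj^I$ gives a bounded $\pmb\Fc_t$-measurable function $h\circ\proj^I$ on $\pmb\Omega$. Using the definition of pushforward and successively the tower property and the multicausality of $\pi$, one computes
\begin{align*}
\E_{\tilde\pi}[hf]
&=\E_\pi\bigl[(h\circ\proj^I)\,f\bigr]
=\E_\pi\bigl[(h\circ\proj^I)\,\E_\pi[f\mid\pmb\Fc_t]\bigr]\\
&=\E_\pi\bigl[(h\circ\proj^I)\,\E_\pi[f\mid\pmb\Fc_{te_i}]\bigr]
=\E_\pi\bigl[(h\circ\proj^I)\,\phi\bigr]
=\E_{\tilde\pi}[h\phi].
\end{align*}
Since $h$ was an arbitrary bounded $\tilde{\pmb\Fc}_t$-measurable function and $\phi$ is $\tilde{\pmb\Fc}_{te_i}$-measurable (hence a fortiori $\tilde{\pmb\Fc}_t$-measurable), this identifies $\E_{\tilde\pi}[f\mid\tilde{\pmb\Fc}_t]=\phi$ as $\tilde{\pmb\Fc}_{te_i}$-measurable, which is precisely the multicausality condition of $\tilde\pi$ for the index $i$ and time $t$.

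I do not expect any serious obstacle here: the only care needed is bookkeeping of how sub-$\sigma$-algebras on $\pmb\Omega$ restrict to sub-$\sigma$-algebras on $\prod_{j\in I}\Omega^j$ under the projection $\proj^I$, together with the observation that conditional independence passes to any coarser $\sigma$-algebra on the ``observed'' side (which is what the above computation exploits when replacing $\pmb\Fc_t$ by $\tilde{\pmb\Fc}_t\circ\proj^I\subseteq\pmb\Fc_t$).
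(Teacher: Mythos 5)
Your proof is correct and follows essentially the same route as the paper's: both verify the multicausality of $\tilde\pi=\proj^I_\#\pi$ by transferring the conditional-independence condition of $\pi$ along $\proj^I$, using the identity $\E_{\tilde\pi}[\,\cdot\,]=\E_\pi[\,\cdot\,\circ\proj^I]$ and the fact that the relevant $\sigma$-algebras on $\prod_{j\in I}\Omega^j$ pull back to sub-$\sigma$-algebras of the corresponding ones on $\pmb\Omega$. The only stylistic difference is that the paper tests conditional independence in its product form ($\E[UV\mid\cdot]=\E[U\mid\cdot]\E[V\mid\cdot]$ with two test functions) while you use the equivalent ``drop the conditioning'' form ($\E[f\mid\pmb\Fc_t]=\E[f\mid\pmb\Fc_{te_i}]$) tested against a single bounded $\tilde{\pmb\Fc}_t$-measurable function; both are valid and of comparable effort.
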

\begin{proof}
    Let $v \in \R^I$ be a vector. We write $\overline{\Gc}_v \coloneqq \bigotimes_{i \in I} \Fc^i_{v_i}$ for simplicity. Fix an arbitrary index $j \in I$ and denote $\tilde{\pi}\coloneqq \proj^I_\# \pi.$ Let further $U,V$ be $\overline{\Gc}_{T e_j}$-measurable and $\overline{\Gc}_{(t,\ldots,t)}$-measurable, respectively, and bounded. 
    Then $U \circ \proj^I$ and $V \circ \proj^I$ are $\overline{\Fc}_{T e_j}$- and $\overline{\Fc}_t$-measurable, respectively.
    It follows that $\pi$--almost surely the following holds
    \begin{align*}
        \E_{\tilde \pi}[UV| \overline{\Gc}_{t e_j}] \circ \proj^I &=
        \E_\pi[(UV) \circ \proj^I | \overline{\Fc}_{t e_j}] 
        = \E_\pi[U \circ \proj^I |\overline{\Fc}_{t e_j}] \cdot \E_\pi[V \circ \proj^I |\overline{\Fc}_{t e_j}]\\
        &= (\E_{\tilde \pi}[U|\overline{\Gc}_{t e_j}] \cdot \E_{\tilde \pi}[V |\overline{\Gc}_{t e_j}]) \circ \proj^I,
    \end{align*}
    where the second equality is due to multicausality of $\pi$. Hence, we have that $\tilde \pi$--a.s.
    \[
        \E_{\tilde \pi}[UV | \overline{\Gc}_{te_j}] = \E_{\tilde \pi}[U|\overline{\Gc}_{t e_j}] \E_{\tilde \pi}[V |\overline{\Gc}_{t e_j}],
    \]
    which shows that $\overline{\Gc}_{Te_j}$ is conditionally independent of $\overline{\Gc}_t$ given $\overline{\Gc}_{t e_j}$. This concludes the proof.
\end{proof}

\begin{lemma} \label{disintegr} 
    Let $\pi \in \cpl(\X^1,\ldots,\X^N)$.
    Then $\pi$ is multicausal if and only if there are $\overline{\Fc}_{t-1}$-measurable kernels $K_t: \overline{\Omega}_{1:t-1} \longrightarrow \Pc(\overline{\Omega}_{t})$,  $t\in\{2,\ldots,T-1\}$ and $K_1 \in \Pc(\overline{\Omega}_1)$ such that
    \begin{equation}
        \label{eq:lem.disintegr}
        \pi( \,\cdot\, | \overline{\Fc}_{t-1})|_{\overline{\Fc}_t} = K_t \;\text{with}\;K_t(\overline{\omega}_{1:t-1}) \in \cpl(\P^N_{t,\omega^\smallfont{i}_{\smallfont{1}\smallfont{:}\smallfont{t}\smallfont{-}\smallfont{1}}}, \ldots, \P^N_{t,\omega^\smallfont{i}_{\smallfont{1}\smallfont{:}\smallfont{t}\smallfont{-}\smallfont{1}}}),\;{\rm resp.}\;K_1 \in \cpl(\P_1^1,\ldots,\P_1^N)\;\pi{\text{\rm --a.s.}}
    \end{equation}
    In this case, we have $\pi(\mathrm{d} \overline{\omega}) = K_1(\mathrm{d} \overline{\omega}_{1}) \otimes K_{2}(\overline{\omega}_{1};\mathrm{d} \overline{\omega}_{2}) \otimes \ldots \otimes K_T(\overline{\omega}_{1:T-1}; \mathrm{d} \overline{\omega}_T).$
\end{lemma}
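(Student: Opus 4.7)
The plan is to prove both implications separately. \textbf{Forward direction.} Since $\pmb{\Omega}$ is a product of Polish spaces, applying the standard disintegration theorem recursively in time yields $\pmb{\Fc}_{t-1}$-measurable kernels $K_t : \pmb{\Omega}_{1:t-1} \longrightarrow \Pc(\pmb{\Omega}_t)$ (with $K_1 \in \Pc(\pmb{\Omega}_1)$) such that $\pi(\mathrm{d}\pmb\omega) = K_1(\mathrm{d}\pmb\omega_1) \otimes \cdots \otimes K_T(\pmb\omega_{1:T-1}; \mathrm{d}\pmb\omega_T)$. It remains to show that $\proj^i_\# K_t(\pmb\omega_{1:t-1};\,\cdot\,) = \P^i_{t,\omega^i_{1:t-1}}$ for $\pi$-a.e.\ $\pmb\omega_{1:t-1}$. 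Fix $i$, $t \geq 2$, and a bounded measurable $f$ on $\Omega^i_t$. By the defining property of the disintegration,
\[ \E_\pi[f(\omega^i_t) \mid \pmb{\Fc}_{t-1}](\pmb\omega_{1:t-1}) = \int f(y) \, (\proj^i_\# K_t(\pmb\omega_{1:t-1};\,\cdot\,))(\mathrm{d}y) \quad \pi\text{--a.s.} \]

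On the other hand, $f(\omega^i_t)$ is $\pmb{\Fc}_{Te_i}$-measurable, so multicausality applied at time $t-1$ with index $i$ yields $\E_\pi[f(\omega^i_t) \mid \pmb{\Fc}_{t-1}] = \E_\pi[f(\omega^i_t) \mid \pmb{\Fc}_{(t-1)e_i}]$. Since $\pmb{\Fc}_{(t-1)e_i}$ is (via $\proj^i$) canonically identified with $\Fc^i_{t-1}$, and since $\proj^i_\# \pi = \P^i$, the right-hand side equals $\int f(y)\, \P^i_{t,\omega^i_{1:t-1}}(\mathrm{d}y)$. Comparing the two expressions for a countable determining family of $f$'s gives the claimed identification of marginals. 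The case $t=1$ is the same argument without conditioning, using $\proj^i_\# \pi|_{\pmb{\Fc}_{e_i}} = \P^i_1$.

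\textbf{Converse.} Assume the kernels satisfy the stated coupling property. Fix $i, t$, and let $U$ be bounded and $\pmb{\Fc}_{Te_i}$-measurable, identified with a function of $\omega^i_{1:T}$. Using the disintegration,
\[ \E_\pi[U \mid \pmb{\Fc}_t](\pmb\omega_{1:t}) = \int U(\omega^i_{1:T}) \, K_{t+1}(\pmb\omega_{1:t};\mathrm{d}\pmb\omega_{t+1}) \cdots K_T(\pmb\omega_{1:T-1};\mathrm{d}\pmb\omega_T). \]
Because $U$ depends only on the $i$-th coordinate, each integration reduces to integrating against the $i$-th marginal of $K_s(\pmb\omega_{1:s-1};\,\cdot\,)$, which by hypothesis equals $\P^i_{s,\omega^i_{1:s-1}}$ and depends only on $\omega^i_{1:s-1}$. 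A backward induction on $s$ from $T$ down to $t+1$ therefore shows that the iterated integral is a function of $\omega^i_{1:t}$ alone, hence $\pmb{\Fc}_{te_i}$-measurable. Since $\pmb{\Fc}_{te_i} \subseteq \pmb{\Fc}_t$, this yields $\E_\pi[U \mid \pmb{\Fc}_t] = \E_\pi[U \mid \pmb{\Fc}_{te_i}]$, which is equivalent to the conditional independence $\pmb{\Fc}_{Te_i} \perp \pmb{\Fc}_t \mid \pmb{\Fc}_{te_i}$ under $\pi$. Running this over all $i$ and $t$ gives multicausality.

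\textbf{Main obstacle.} The only delicate point is the careful bookkeeping of $\sigma$-algebras, specifically the identification of $\pmb{\Fc}_{(t-1)e_i}$ with $\Fc^i_{t-1}$ via $\proj^i$ and the correct orientation of multicausality in the forward direction (so that $f(\omega^i_t)$ sits in $\pmb{\Fc}_{Te_i}$, the conditionally-independent side, rather than in $\pmb{\Fc}_{t-1}$). Once this is in place, both directions reduce to routine manipulations of conditional expectations and iterated kernels.
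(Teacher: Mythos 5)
Your proof is correct and follows essentially the same strategy as the paper's: obtain the successive-kernel disintegration of $\pi$ from the Polish-space structure, then identify the marginals of each kernel using the conditional independence (forward direction) and, conversely, use the marginal conditions to propagate a backward induction showing that $\E_\pi[U \mid \pmb{\Fc}_t]$ is $\pmb{\Fc}_{te_i}$-measurable. The paper states the equivalence more compactly at the level of regular conditional probabilities, namely that $\pmb{\Fc}_{Te_i} \perp \pmb{\Fc}_t \mid \pmb{\Fc}_{te_i}$ is equivalent to $\proj^i_\#\pi(\,\cdot\,|\pmb{\Fc}_t)=\proj^i_\#\pi(\,\cdot\,|\pmb{\Fc}_{te_i})=\P^i(\,\cdot\,|\Fc^i_t)$, whereas you unwind the same content through conditional expectations of test functions and iterated kernel integrals; the two are interchangeable.
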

\begin{proof}
    As we have assumed that all considered spaces are Polish, every $\pi \in \Pc(\overline{\Omega})$ admits a regular conditional probability $\pi( \,\cdot\, | \overline{\Fc}_t)$ given $\overline{\Fc}_t$.
    Then we have that $\overline{\Fc}_{T e_i}$ is conditionally independent of $\overline{\Fc}_t$ given $\overline{\Fc}_{te_i}$ under $\pi$ if and only if
    \begin{equation}
        \label{eq:lem.disintegr.1}
        \proj^i_\# \pi( \,\cdot\, | \overline{\Fc}_t) = \proj^i_\# \pi(\,\cdot\, | \overline{\Fc}_{te_i}) = \P^i(\;\cdot\; | \Fc^i_t) \quad \pi\text{--a.s.,}
    \end{equation}
    where the last equality is due to $\proj^i_\# \pi = \P^i$.
    Hence, by potentially (for every $t$) modifying the regular conditional probability $\pi(\,\cdot\, | \overline{\Fc}_t)$ on a $\pi$-null set, we deduce from \eqref{eq:lem.disintegr.1} that $\pi \in \cplmc(\X^1,\ldots,\X^N)$ if and only if \eqref{eq:lem.disintegr} holds. The last assertion follows by the tower property. Indeed, for any bounded $\overline{\Fc}_T$-measurable random variable $V$, 
       \begin{multline*}
        \E_\pi[V] = \E_\pi[\E_\pi[V | \overline{\Fc}_{T-1}] ]=\E_\pi[\E_\pi[\ldots \E_\pi [V \vert \overline{\Fc}_{T-1} ] \ldots | \overline{\Fc}_{2}] | \overline{\Fc}_{1}] \\= \int \ldots \int V(\overline{\omega}) \, K_{T}(\overline{\omega}_{1:T-1}; \mathrm{d}\overline{\omega}_T) \ldots K_1(\mathrm{d} \overline{\omega}_1),
    \end{multline*}
which concludes the proof.
\end{proof}

\begin{remark}
    As a consequence of {\rm\Cref{disintegr}}, we emphasize that when $(K_t)_{t = 1}^T$ is a family of kernels where, for every $t\in\{1,\ldots,T\}$, $K_t$ is $\overline{\Fc}_{t-1}$-measurable and 
    \[
        K_t(\overline{\omega}_{1:t-1}) \in \cpl(\P^N_{t,\omega^\smallfont{i}_{\smallfont{1}\smallfont{:}\smallfont{t}\smallfont{-}\smallfont{1}}}, \ldots, \P^N_{t,\omega^\smallfont{i}_{\smallfont{1}\smallfont{:}\smallfont{t}\smallfont{-}\smallfont{1}}}),\;t \in \{2,\ldots,T\},\quad{\rm and}\quad K_1 \in \cpl(\P^1_1,\ldots,\P^N_1),
    \]
    then the probability measure
    \begin{equation}
        \label{eq:def_via_disintgr}
        \pi(\mathrm{d}\overline{\omega}) \coloneqq K_1(\mathrm{d} \overline{\omega}_1) \otimes K_{2}(\overline{\omega}_{1};\mathrm{d}\overline{\omega}_{2}) \otimes \ldots \otimes K_T(\overline{\omega}_{1:T-1};\mathrm{d}\overline{\omega}_T)
    \end{equation}
    is contained in $\cplmc(\X^1,\ldots,\X^N)$.
    This follows directly as by construction $\pi \in \cpl(\X^1,\ldots,\X^N)$ and therefore all assumptions of {\rm Lemma~\ref{disintegr}} are satisfied.
    
    \medskip
    In the special case when $(\Omega^i,\Fc^i,\F^i,\P^i)$ coincides with $(\Xc,\Fc^X_T,\F^X,\mu^i)$ for some $\mu^i \in \Pc(\Xc)$, where $(\Xc,\Fc^X_T,\F^X)$ is the canonical path space with its filtration, and $X^i$ is the canonical process on $\Xc$ for every $i \in\{ 1,\ldots,N\}$, {\rm \Cref{disintegr}} can be paraphrased as follows:
    Let $\pi \in \Pc(\pmb{\Xc})$ be a coupling with marginals $\proj^i_{\#} \pi = \mu^i$ and write $\mu^i_{t,x^\smallfont{i}_{\smallfont{1}\smallfont{:}\smallfont{t}\smallfont{-}\smallfont{1}}}$  for the disintegration of ${\proj^{1:t}}_\# \mu^i$ with respect to ${\proj^{1:t-1}}_\#\mu^i$.
    Then $\pi$ is multicausal if and only if
    \[
        \pi(\mathrm{d} x^{1:N}_{1:T}) = \pi_1(\mathrm{d} x^{1:N}_1) \otimes \tilde K_2(x^{1:N}_1; \mathrm{d}x^{1:N}_2) \otimes \ldots \otimes \tilde K_T(x^{1:N}_{1:T-1}; \mathrm{d}x^{1:N}_T),
    \]
    where, for every $t \in \{2,\ldots,T\}$, $\tilde K_t: \Xc^N_{1:t-1} \longrightarrow \Pc(\Xc^N_t)$ is a measurable kernel with $\tilde K_t(x^{1:N}_{1:t-1}) \in \cpl(\mu^1_{t,x^\smallfont{i}_{\smallfont{1}\smallfont{:}\smallfont{t}\smallfont{-}\smallfont{1}}}, \ldots,\mu^1_{t,x^\smallfont{i}_{\smallfont{1}\smallfont{:}\smallfont{t}\smallfont{-}\smallfont{1}}})$ and $\pi_1 \in \cpl(\mu^1_1,\ldots,\mu^N_1)$.
\end{remark}

\begin{lemma}  \label{lem:mcgluing}
    Let $M \in \{1,\ldots,N\}$, $\pi \in \cplmc(\X^1,\ldots,\X^M)$ and $\gamma  \in \cplmc(\X^M,\ldots, \X^N)$. Let $K: \Omega^M \longrightarrow \Pc(\Omega^{M+1:N})$ be a measurable kernel such that $\gamma(\mathrm d \omega^M,\ldots\mathrm d \omega^N)=\gamma(\mathrm d \omega^M) \otimes K(\omega^M;\mathrm d \omega^{M+1},\ldots\mathrm d \omega^N).$
    Then $\pi \otimes K \in \cplmc(\X^1,\ldots,\X^{N})$
    where \[ ( \pi \otimes K )(\mathrm{d}\overline{\omega})\coloneqq K(\omega^M;\mathrm{d}\omega^{M+1,N})\pi(\mathrm{d}\omega^{1:M}).\]
\end{lemma}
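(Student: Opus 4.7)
My plan is to verify two things: that $\pi \otimes K$ has the correct marginals and that it is multicausal. For the marginals, the projections onto $\Omega^1,\ldots,\Omega^M$ coincide with those of $\pi$ because $K$ does not affect those coordinates. For $i \in \{M+1,\ldots,N\}$, a Fubini computation together with the fact that $K$ depends only on $\omega^M$ reduces the projection to integrating $\proj^i_\# K(\omega^M;\cdot)$ against $\P^M = \proj^M_\# \pi$, which yields $\proj^i_\# \gamma = \P^i.$

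For multicausality I will use \Cref{disintegr}: it suffices to produce a time-disintegration $(\pi \otimes K)(\mathrm d\pmb\omega) = L_1(\mathrm d\pmb\omega_1) \otimes \ldots \otimes L_T(\pmb\omega_{1:T-1};\mathrm d\pmb\omega_T)$ in which each $L_t(\pmb\omega_{1:t-1};\cdot)$ couples $(\P^i_{t,\omega^i_{1:t-1}})_{i=1}^N$. Applying \Cref{disintegr} to $\pi$ and $\gamma$ separately yields time-disintegrations $\pi = K^\pi_1 \otimes \ldots \otimes K^\pi_T$ and $\gamma = K^\gamma_1 \otimes \ldots \otimes K^\gamma_T$ whose kernels couple the relevant one-step conditionals. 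Since the $\omega^M_t$-marginal of $K^\gamma_t$ equals $\P^M_{t,\omega^M_{1:t-1}}$, I can further disintegrate $K^\gamma_t$ along this marginal and write
\[ K^\gamma_t(\pmb\omega^{M:N}_{1:t-1};\mathrm d\pmb\omega^{M:N}_t) = \P^M_{t,\omega^M_{1:t-1}}(\mathrm d\omega^M_t) \otimes H_t\big(\pmb\omega^{M:N}_{1:t-1},\omega^M_t;\mathrm d\pmb\omega^{M+1:N}_t\big), \]
where the residual kernels $H_t$ couple $(\P^i_{t,\omega^i_{1:t-1}})_{i=M+1}^N.$

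Combining the $H_t$'s with the time-disintegration of $\P^M$ and rearranging by Fubini, I claim that $K(\omega^M;\mathrm d\pmb\omega^{M+1:N})$ factors as the time-ordered composition $H_1(\omega^M_1;\cdot) \otimes \ldots \otimes H_T(\pmb\omega^{M:N}_{1:T-1},\omega^M_T;\cdot)$. Interleaving with the disintegration of $\pi$ then yields $\pi \otimes K = L_1 \otimes \ldots \otimes L_T$ with
\[ L_t(\pmb\omega_{1:t-1};\mathrm d\pmb\omega_t) \coloneqq K^\pi_t\big(\pmb\omega^{1:M}_{1:t-1};\mathrm d\pmb\omega^{1:M}_t\big) \otimes H_t\big(\pmb\omega^{M:N}_{1:t-1},\omega^M_t;\mathrm d\pmb\omega^{M+1:N}_t\big). \]
By construction each $L_t(\pmb\omega_{1:t-1};\cdot)$ couples all $N$ one-step conditionals, and \Cref{disintegr} delivers multicausality of $\pi \otimes K.$

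The main obstacle is the Fubini bookkeeping that recomposes $K$ out of the $H_t$'s. This is precisely where multicausality of $\gamma$ enters: it guarantees that the $\omega^M_t$-marginal of $K^\gamma_t$ equals $\P^M_{t,\omega^M_{1:t-1}}$ and can therefore be peeled off cleanly, leaving a kernel depending only on $(\omega^M_{1:t},\pmb\omega^{M+1:N}_{1:t-1})$ rather than on $\omega^M_{t+1:T}$; only then can the $H_t$'s be recomposed in time-order to recover $K(\omega^M;\cdot)$ and subsequently be interleaved with the $K^\pi_t$'s.
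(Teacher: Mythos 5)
Your proof is correct, but it takes a genuinely different route from the paper's. The paper argues abstractly through conditional independence: it reduces the claim to showing, for each $i$ and $t$, that $\pmb{\Fc}_{Te_i}$ is conditionally independent of $\pmb{\Fc}_t$ given $\pmb{\Fc}_{te_i}$ under $\pi\otimes K$, and derives this by combining a conditional-independence statement coming from the product structure of $\pi\otimes K$ (via \cite[Lemma~2.11]{pa22}), the multicausality of $\pi$, and Kallenberg's chain rule for conditional independence \cite[Theorem~8.12]{Kall02}, with the indices $i>M$ handled by a symmetry reduction. You instead work entirely through the disintegration characterization of Lemma~\ref{disintegr}: you split each one-step kernel $K^\gamma_t$ of $\gamma$ as $\P^M_{t,\omega^M_{1:t-1}}\otimes H_t$, recover $K$ as the time-ordered product $H_1\otimes\cdots\otimes H_T$ by a Fubini rearrangement, and then exhibit the time-disintegration $\pi\otimes K = L_1\otimes\cdots\otimes L_T$ with $L_t = K^\pi_t\otimes H_t$, each $L_t$ visibly coupling all $N$ one-step conditionals. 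This is more constructive and self-contained, needs no external conditional-independence machinery, and treats all $i\in\{1,\ldots,N\}$ at once rather than via symmetry. The only delicate step is the Fubini rearrangement recomposing $K$, and you correctly isolate the fact that makes it work, namely that $H_t$ depends on $(\omega^M_{1:t},\pmb\omega^{M+1:N}_{1:t-1})$ but not on $\omega^M_{t+1:T}$, so the kernels $\P^M_{s,\cdot}$ with $s>t$ commute past $H_t$; two small points you leave implicit but which are needed for the $L_t$'s to couple the right conditionals are that the $\omega^M_t$-marginal of $K^\pi_t$ equals $\P^M_{t,\omega^M_{1:t-1}}$ (so the integral of $\proj^i_\# H_t$ against it is $\P^i_{t,\omega^i_{1:t-1}}$ for $i>M$), and that $\proj^{M:N}_\#(\pi\otimes K)=\gamma$, which guarantees the $\gamma$-a.s.\ properties of $H_t$ carry over to $(\pi\otimes K)$-a.s.\ statements.
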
 

\begin{proof}
    We have to show for every $t \in \{1,\ldots,T\}$ and $i \in \{1,\ldots,N\}$ that under $\pi \otimes K$
    \begin{equation}
        \label{eq:lem.mcgluing.1}
         \overline{\Fc}_{Ne_i} \text{ is conditionally independent of } \overline{\Fc}_{t},\; \text{given }\overline{\Fc}_{t e_i}.
    \end{equation}
    Due to symmetry, we may assume without loss of generality that $i \in \{1,\ldots,M\}$. Indeed, it is easy to verify that if we interchange the roles of $\pi$ and $\gamma,$ we obtain the same measure. In other words, we have $\gamma \otimes K^\pi=\pi \otimes \gamma,$ where $\pi=({\rm proj}^{M}_{\#}\gamma ) \otimes K^\pi,$ defines the same measure as $\pi \otimes K$.
    \medskip By \cite[Lemma 24 (1)]{pa22} we have for every $t \in \{1,\ldots,T\}$ that under $\pi \otimes K$
    \begin{equation}
        \label{eq:lem.mcgluing.2}
    \overline{\Fc}_{Nv} \text{ is conditionally independent of } \overline{\Fc}_{tw},\; \text{given }\overline{\Fc}_{t v},
    \end{equation}
    where $v, w \in \{0,1\}^N$ with $v_i + w_i = 1,$ $i \in \{1,\ldots,N\},$ and $v_i = 1$ if $i \le M$ and $0$ otherwise.
    Further, as $\pi$ is multicausal we also have
    \begin{equation}
        \label{eq:lem.mcgluing.3}
        \overline{\Fc}_{Ne_i} \text{ is conditionally independent of }\overline{\Fc}_{tv},\; \text{given }\overline{\Fc}_{te_i}.
    \end{equation}
    By invoking \cite[Theorem 8.12]{Kall02}, we immediately
 derive \eqref{eq:lem.mcgluing.1} from \eqref{eq:lem.mcgluing.2} and \eqref{eq:lem.mcgluing.3}.
\end{proof}

\begin{lemma} \label{lemma:glueing} Let $\Y \in {\rm FP}(\Yc)$ and $\pi^i \in \cplbc(\X^i,\Y),\,i\in \{1,\ldots,N\}.$ Then there exists a probability measure  $\Q \in \Pc (\overline{\Omega} \times\Omega^\Y, \overline{\Fc}_T \otimes \Fc^\Y_T )$ such that
\begin{enumerate}[label = (\roman*)]
\item \label{it:lem.gluing.1} $(\mathrm{proj}^i,\mathrm{proj}^{N+1})_\#\Q=\pi^i;$
\item \label{it:lem.gluing.2} $\Q \in \cplmc(\X^1,\ldots,\X^N,\Y)$.
\end{enumerate}
In particular, ${\rm proj}^{ 1:N }_\#\Q \in \cplmc(\X^1,\ldots,\X^N)$.
\end{lemma}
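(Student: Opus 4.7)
My plan is to construct $\Q$ by iteratively gluing the couplings $\pi^1,\ldots,\pi^N$ along their common marginal $\X$, invoking \Cref{lem:mcgluing} at each step. The gluing kernels come from disintegrating each $\pi^i$ against its $\X$-coordinate, which is well-defined since each $\pi^i$ has $\X$-marginal $\P^\X$.

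I proceed by induction on $k \in \{1,\ldots,N\}$, constructing $\Q_k \in \cplmc(\X^1,\ldots,\X^k,\X)$ on $\Omega^1 \times \cdots \times \Omega^k \times \Omega^\X$ (listing $\X$ in the last slot) such that $\proj^{k+1}_\# \Q_k = \P^\X$ and $(\proj^i,\proj^{k+1})_\# \Q_k = \pi^i$ for every $i \le k$. The base case is $\Q_1 \coloneqq \pi^1$, for which bicausality coincides with multicausality since $N=2$, as noted earlier in the paper. For the inductive step, I disintegrate
\[ \pi^{k+1}(\mathrm{d}\omega^{k+1},\mathrm{d}\omega^\X) = K^{k+1}(\omega^\X;\mathrm{d}\omega^{k+1})\,\P^\X(\mathrm{d}\omega^\X), \]
which is possible by Polishness, and observe that $e_\# \pi^{k+1} = \P^\X \otimes K^{k+1}$ lies in $\cplbc(\X,\X^{k+1}) = \cplmc(\X,\X^{k+1})$. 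Applying \Cref{lem:mcgluing} with $M = k+1$ (so that $\X$ sits in the last slot of $\Q_k$ viewed as $\pi$, and in the first slot of $e_\# \pi^{k+1}$ viewed as $\gamma$) produces $\Q_k \otimes K^{k+1} \in \cplmc(\X^1,\ldots,\X^k,\X,\X^{k+1})$; the symmetry of the multicausality condition under permutation of marginals then allows me to permute $\X$ back to the last position, yielding $\Q_{k+1}$. The marginal conditions $(\proj^i,\proj^{\X})_\# \Q_{k+1} = \pi^i$ for $i \le k$ are preserved by marginalizing out the freshly inserted $\omega^{k+1}$, while for $i = k+1$ they follow directly from $\P^\X \otimes K^{k+1} = \pi^{k+1}$.

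Setting $\Q \coloneqq \Q_N$ then yields \ref{it:lem.gluing.1} and \ref{it:lem.gluing.2}, and the ``in particular'' claim is immediate from \Cref{lem:mcrestriction} applied with $I = \{1,\ldots,N\}$. The main obstacle I anticipate is the slot bookkeeping required by \Cref{lem:mcgluing}, which expects the common marginal to sit in a specific position; this is handled by the invariance of the multicausality definition under permutations of its marginals (equivalently, by combining \Cref{lem:mcrestriction} with a coordinate reordering). Everything else reduces to routine disintegration manipulations.
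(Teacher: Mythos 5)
Your proposal is correct and follows essentially the same route as the paper: disintegrate each $\pi^i$ against its common $\X$-marginal, and glue iteratively via \Cref{lem:mcgluing}, which is exactly the ``simple induction'' the paper invokes (the paper just writes the resulting tensor-product measure $\Q = \bigl(\bigotimes_{i=1}^N K^i\bigr)\otimes\P^\X$ in closed form rather than unrolling the recursion). Your treatment of the slot-reordering via permutation-invariance of multicausality is a sound way to make \Cref{lem:mcgluing} apply at each step, and the final appeal to \Cref{lem:mcrestriction} matches the paper.
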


\begin{proof}
    For notational simplicity, we write $\omega^\Y$ for the elements in $\Omega^\Y$.
    Let, for every $\pi^i$, $i \in \{ 1,\ldots,N \}$, $K^i : \Omega^\Y \longrightarrow \Pc(\Omega^i)$ be a measurable kernel satisfying $ \pi^i(\mathrm{d}\omega^i,\mathrm{d}\omega^\Y)=K^i(\omega^\Y;\mathrm{d}\omega^i) \P^\Y(\mathrm{d}\omega^\Y).$
    Define
    \[ \Q(\mathrm{d}\overline{\omega},\mathrm{d}\omega^\X)\coloneqq  \Big(\bigotimes_{i=1}^N K^i(\omega^\Y;\mathrm{d}\omega^i) \Big) \otimes \P^\Y(\mathrm{d}\omega^\Y). \]
    Clearly, $\Q$ satisfies $\ref{it:lem.gluing.1}$.
    Next, $\ref{it:lem.gluing.2}$ follows from a simple induction using Lemma \ref{lem:mcgluing}, which asserts that multicausality is preserved under gluing.
    The last statement is a consequence of Lemma~\ref{lem:mcrestriction}. \end{proof}

\begin{lemma} \label{lem:multic_ip} Let $(\X^1, \ldots, \X^N) \in {\rm FP}^N.$ Then the following are true.
\begin{enumerate}[label = (\roman*)]
    \item \label{itm:multic_ip1} If $\pi \in \cplmc(\X^1,\ldots,\X^N),$ then ${\rm Law}_{\pi}({\rm ip}(\X^1),\ldots,{\rm ip}(\X^N)) \in \cplmc({\rm ip}(\X^1),\ldots,{\rm ip}(\X^N)).$
    \item \label{itm:multic_ip2} Conversely, if $\gamma \in \cplmc({\rm ip}(\X^1),\ldots,{\rm ip}(\X^N)),$ then there exists $\pi \in \cplmc(\X^1,\ldots,\X^N)$ such that $\gamma={\rm Law}_{\pi}({\rm ip}(\X^1),\ldots,{\rm ip}(\X^N)).$
\end{enumerate}
\end{lemma} 
\begin{proof}
Let us first show $\ref{itm:multic_ip1}$. Since $({\rm id},{\rm ip}(\X^i))_{\#} \P^i \in \cplbc(\X^i,{\rm ip}(\X^i))$ by \cite[Theorem 3.9.$(i)$]{BaBePa21}, the result follows from repeatedly using \Cref{lem:mcgluing}.

\medskip As for $\ref{itm:multic_ip2},$ consider the disintegration $\P^i(\mathrm{d}\omega^i)=\mu^i(\mathrm{d}z) \otimes \kappa^i(z;\mathrm{d}\omega^i),$ where $\mu^i\coloneqq {\rm Law}_{\P^i}({\rm ip}(\X^i)) \in \Pc(\Zc),$ and define \[ \pi(\mathrm{d}\omega^1,\ldots,\mathrm{d}\omega^n)\coloneqq \gamma(\mathrm{d}z^1,\ldots,\mathrm{d}z^N)\otimes \prod_{i=1}^N \kappa^i(z^i;\mathrm{d}\omega^i). \]
Since $({\rm id},{\rm ip}(\X^i))_{\#} \P^i \in \cplbc(\X^i,{\rm ip}(\X^i))$ and $\gamma \in \cplmc ({\rm ip}(\X^{1}),\ldots, {\rm ip}(\X^{N}))$, we get $\pi \in \cplmc(\X^1,\ldots,\X^N)$ by analogous arguments as in \Cref{lemma:glueing}. 
\end{proof}

\begin{lemma} \label{lem:comp_multic} Let $\Kc_i \subseteq {\rm FP}$ for $i \in \{1,\ldots,N\}$ be compact. Then the set
\[ \K \coloneqq\Big\{ \cplmc ({\rm ip}(\X^1),\ldots,{\rm ip}(\X^N)) \in \Pc(\Zc^N)\, \Big\vert\,\X^1 \in \Kc_1,\ldots,\X^N \in \Kc_N \Big\} \] is compact.   
\end{lemma}

\begin{proof} 
Let \[\gamma^n=\cplmc({\rm ip}(\X^{1,n}),\ldots,{\rm ip}(\X^{N,n})) \in \K \text{ for some }\X^{1,n} \in \Kc_1,\ldots, \X^{N,n} \in \Kc_N,\;n \in \N,\] be a sequence in $\K$. Then, for every $i \in \{1,\ldots,N\}$, up to passing to a subsequence, the sequence $(\X^{i,n})_{n \in \N}$ admits a limit $\X^{i,\infty} \in \Kc_i$ by compactness of $\Kc_i.$ It follows that for every $i \in \{1,\ldots,N\}$ the set $\{ {\rm Law}_{\P^{i,n}}({\rm ip}(\X^{i,n}))\,\vert\, n \in \N \cup \{\infty\}\}$ is tight, and so the sequence $\gamma^n$ admits a limit up to choosing a subsequence, say $\gamma^\infty,$ which belongs to $\cpl({\rm ip}(\X^{1,\infty}),\ldots,{\rm ip}(\X^{N,\infty}))$ by standard arguments using the isometry \cite[Theorem 3.10]{BaBePa21}; see \emph{e.g.}\ the proof of \cite[Theorem 1.7.2]{PaZe20}. We now show multicausality of $\gamma^\infty$. For $\gamma \in \cpl ({\rm ip}(\X^{1}),\ldots, {\rm ip}(\X^{N})),$ we have that $\gamma \in \cplmc ({\rm ip}(\X^{1}),\ldots, {\rm ip}(\X^{N}))$ if and only if for every $t \in \{1,\ldots,T-1\}$ and $i \in \{1,\ldots,N\}$ and for every $\varphi$ and $f$ countinuous and bounded it holds \begin{multline}   \int_{\Zc^N} \Big[ \varphi(z^1_{1:t},\ldots,z^N_{1:t}) f(z^i_{t+1:T}) \Big]\gamma(\mathrm{d}z^1,\ldots,\mathrm{d}z^n)\\
=\int_{\Zc_{1:t}^N} \bigg[ \varphi(z^1_{1:t},\ldots,z^N_{1:t}) \int_{\Zc_{t+1}} f(z_{t+1:T}^i) p^i_{t}(\mathrm{d} z_{t+1:T}^i) \bigg]\gamma(\mathrm{d}z^1,\ldots,\mathrm{d}z^n). \label{eqn:multic_test} \end{multline}
As the functions in the integrals on both sides are continuous and bounded functions on $\Zc$ and the equality \eqref{eqn:multic_test} is satisfied by $\gamma^n$ by \Cref{lem:multic_ip}.$\ref{itm:multic_ip1}$, we conclude that $\gamma^\infty \in \cplmc ({\rm ip}(\X^{1}),\ldots, {\rm ip}(\X^{N}))$ by passing to the limit on both sides in \eqref{eqn:multic_test}.
\end{proof}

\begin{lemma} \label{lem:topology} Let $\Xc=(\Xc,\tau)$ be a Polish topological space and let $(\Yc_n)_{n \in \N}$ be a sequence of Polish spaces. Further, let $(f_n)_{n \in \N}$ be a sequence of measurable functions $f_n : \Xc \longrightarrow \Yc_n$, $n \in \N.$ Then, there exists a refinement of the topology $\tau$, say $\tilde{\tau} \supset \tau$, such that the following holds:
\begin{enumerate}[label = (\roman*)]
\item $f_n$ is continuous with respect to $\tilde{\tau}$ for every $n \in \N$;
\item the topological space $(\Xc,\tilde{\tau})$ is Polish;
\item the Borel $\sigma$-algebras generated by $\tau$ and $\tilde{\tau}$ coincide. 
\end{enumerate}
\end{lemma}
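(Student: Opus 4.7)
The plan is to reduce the statement to the classical Kuratowski refinement theorem: given a Polish space $(\Xc,\tau)$ and a countable family $\mathcal{B}$ of Borel subsets of $\Xc$, one can construct a finer Polish topology $\tilde\tau\supseteq\tau$ in which every member of $\mathcal{B}$ is clopen and which generates the same Borel $\sigma$-algebra as $\tau$. This refinement result is classical and can be found, for instance, in Kechris' \emph{Classical Descriptive Set Theory}; I would quote it rather than reprove it.

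First, for each $n\in\N$, fix a countable basis $(U_{n,k})_{k\in\N}$ of the Polish topology on $\Yc_n$. Since $f_n$ is Borel measurable, each preimage $B_{n,k}\coloneqq f_n^{-1}(U_{n,k})$ belongs to the Borel $\sigma$-algebra $\mathcal{B}(\Xc,\tau)$. The countable collection
\[
    \mathcal{B}\coloneqq\{B_{n,k}:n,k\in\N\}
\]
is then a countable family of Borel subsets of $\Xc$. Apply the Kuratowski refinement theorem to obtain a Polish topology $\tilde\tau$ on $\Xc$ with $\tilde\tau\supseteq\tau$, in which every $B_{n,k}$ is open (indeed clopen), and such that $\mathcal{B}(\Xc,\tilde\tau)=\mathcal{B}(\Xc,\tau)$. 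This immediately yields items (ii) and (iii) of the lemma.

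For item (i), fix $n\in\N$ and let $U\subseteq\Yc_n$ be open. Write $U=\bigcup_{k\in I}U_{n,k}$ for some $I\subseteq\N$. Then
\[
    f_n^{-1}(U)=\bigcup_{k\in I}f_n^{-1}(U_{n,k})=\bigcup_{k\in I}B_{n,k}\in\tilde\tau,
\]
so $f_n$ is continuous as a map $(\Xc,\tilde\tau)\longrightarrow\Yc_n$. This completes the argument.

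The main work is really contained in the refinement theorem, whose proof proceeds by first treating a single Borel set $B$ (one writes $\Xc$ as the disjoint union $B\sqcup(\Xc\setminus B)$, equips each piece with a Polish topology that refines $\tau$ restricted to it using the Lusin--Suslin theorem that Borel subsets of Polish spaces admit finer Polish topologies making them clopen, and takes the topological sum), and then iterates over the countable family $\mathcal{B}$ using a standard diagonal argument that preserves Polishness via the Alexandrov characterization of Polish subspaces as $G_\delta$ sets. Since this is a well-established result in descriptive set theory, I would simply invoke it. The only potential subtlety is to make sure the refined topology is the same for all $f_n$ simultaneously, which is precisely why one collects the preimages from all $n$ into a single countable family before applying the theorem.
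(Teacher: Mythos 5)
Your proof is correct. The paper itself only says ``Follows from \cite[Lemma A.1]{EcPa22}'', and the argument behind that cited lemma is precisely the one you give: pull back a countable basis of each $\Yc_n$ to obtain a countable family of Borel sets, apply the Kuratowski refinement theorem (Kechris, \emph{Classical Descriptive Set Theory}, Thm.\ 13.1 plus the countable iteration in Ex.\ 13.5) to make them all clopen while preserving Polishness and the Borel $\sigma$-algebra, and then note that every open $U\subseteq\Yc_n$ is a union of basis elements so $f_n^{-1}(U)$ is a union of $\tilde\tau$-open sets, giving continuity. Your reduction is the canonical one and nothing is missing; the only (harmless) imprecision is attributing the single-set refinement to the Lusin--Suslin theorem, whereas it is usually stated as a separate result, but since you invoke the Kuratowski theorem as a black box anyway, this does not affect the proof.
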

\begin{proof} See \citeauthor*{Ke95} \cite[Exercise 13.12 (ii)]{Ke95}. \end{proof}

\bibliography{refs}

\end{document}